\theoremstyle{plain}
\newtheorem{theorem}{Theorem}[section]
\newtheorem{lemma}[theorem]{Lemma}
\newtheorem{proposition}[theorem]{Proposition}
\newtheorem{corollary}[theorem]{Corollary}
\theoremstyle{definition}
\theoremstyle{remark}
\newtheorem{remark}[theorem]{Remark}
\numberwithin{equation}{section}
\newcommand{\EE}{{\mathcal E}}
\newcommand{\LL}{{\mathcal L}}
\newcommand{\lam}{\lambda}
\renewcommand{\phi}{\varphi}
\newcommand{\Gam}{\Gamma}
\newcommand{\Sig}{\Sigma}
\newcommand{\Ome}{\Omega}
\newcommand{\e}{\varepsilon}
\newcommand{\vphi}{\varphi}
\newcommand{\R}{\mathbb R}
\newcommand{\Ha}{{\mathcal H}}
\newcommand{\Chi}{{\mathcal X}}
\newcommand{\wto}{\rightharpoonup}
\newcommand{\wstar}{\stackrel{*}{\rightharpoonup}}
\newcommand{\dd}{\mathrm{d}}
\newcommand{\dds}{\ \mathrm{d}}
\newcommand{\p}{\partial}
\renewcommand{\t}{\tilde}
\newcommand{\dist}{{\rm dist\,}}
\newcommand{\diam}{{\rm diam\,}}
\newcommand{\supp}{{\rm supp\,}}
\newcommand{\trace}{{\rm Tr\,}}
\newcommand{\mindiam}{{\rm min\,diam\,}}
\DeclareMathOperator*{\loc}{loc}
\DeclareMathOperator{\graph}{graph}
\newcommand{\qqquad}[0]{\qquad\qquad}
\newcommand{\BS}[0]{\backslash}
\newcommand{\Mattes}{\mathbin{\vrule height 1.6ex depth 0pt width 0.13ex\vrule height 0.13ex depth 0pt width 1.3ex}}
 \newcommand{\lupref}[2]{\hspace{0ex}
  \stackrel{\eqref{#1}}{#2}} 
\newcommand{\average}{{\mathchoice {\kern1ex\vcenter{\hrule height.4pt width 6pt depth0pt} \kern-9.7pt}
    {\kern1ex\vcenter{\hrule height.4pt width 4.3pt depth0pt} \kern-7pt} {} {} }} \newcommand{\med}{\average\int}
\title[Optimal distribution of oppositely charged phases]{Optimal distribution of oppositely charged phases: perfect screening and other properties}
\author{Marco Bonacini \and Hans Kn\"upfer \and Matthias R\"{o}ger}
\date{\today}
\address[Marco Bonacini]{Institut f\"ur Angewandte Mathematik, Universit\"at Heidelberg, Im Neuenheimer Feld 294, 69120 Heidelberg, Germany}
\email{marco.bonacini@uni-heidelberg.de}
\address[Hans Kn\"upfer]{Institut f\"ur Angewandte Mathematik and Interdisciplinary Center for Scientific Computing (IWR), Universit\"at Heidelberg, 69120 Heidelberg, Germany}
\email{knuepfer@uni-heidelberg.de}
\address[Matthias R\"{o}ger]{Fakult\"{a}t f\"{u}r Mathematik, Technische Universit\"{a}t Dortmund, Vogelpothsweg 87, D-44227 Dortmund, Germany}
\email{matthias.roeger@tu-dortmund.de}
\subjclass[2010]{49K10, 49Q10, 49S05, 31B35, 35R35} \keywords{Nonlocal Coulomb interaction, charge distribution, screening, charge neutrality, obstacle problem}
\begin{document}

\begin{abstract}
  We study the minimum energy configuration of a uniform distribution of negative charge subject to Coulomb repulsive self-interaction and attractive interaction with a fixed positively charged domain. After having established existence and uniqueness of a minimizing configuration, we prove charge neutrality and the complete \textit{screening} of the Coulomb potential exerted by the positive charge, and we discuss the regularity properties of the solution. We also determine, in the variational sense of $\Gamma$-convergence, the limit model when the charge density of the negative phase is much higher than the positive one.
\end{abstract}

\maketitle


\section{Introduction} \label{sec-intro} %

In this paper, we investigate ground states of the energy for a system including both attractive as well as repulsive Coulomb interactions. The very fundamental nature of such \textit{nonlocal} Coulomb interaction is testified by its ubiquitous presence in nature and by the vast number of its occurrences in physical systems.

We consider the problem of finding the optimal shape taken by a uniform negative distribution of charge interacting with a fixed positively charged region; mathematically, this leads to a problem in \textit{potential theory} (\cite{Hel,Lan}). In our model, minimizing configurations are determined by the interplay between the repulsive self-interaction of the negative phase and the attractive interaction between the two oppositely charged regions. We investigate existence of global minimizers and their structure, and we obtain \textit{charge neutrality} and \textit{screening} as key features of our system. In particular, it is noteworthy that the positive phase is completely screened by the optimal negative distribution of charge, in the sense that outside the support of the two charges the long-range potential exerted by the positive region is canceled by the presence of the negative one.

On the basis of this screening result, we can draw a link to the classical theory of \textit{obstacle problems} \cite{Caf77,Caf80,Caf98,PetShaUra}: indeed, the net potential of the optimal configuration can be characterized -- outside the positively charged region and with respect to its own boundary conditions -- as the solution to an obstacle problem, a fact which in turn entails further regularity properties of the minimizer.

\medskip

Mathematically, we represent the fixed positively charged domain by a bounded open set $\Omega^+\subset\R^3$, and we are
interested in minimizing among configurations $\Omega^-\subset\R^3\setminus\Omega^+$ with finite volume the
nonlocal energy
\begin{align} \label{def-E-intro} 
	&I(\Omega^+,\Ome^-)\ := \ \notag\\
	  &\int_{\Omega^+}\int_{\Omega^+}\frac{1}{4\pi|x-y|} \dds x \dd y +
  \int_{\Omega^-}\int_{\Omega^-}\frac{1}{4\pi|x-y|}\dds x \dd y -2 \int_{\Omega^+}\int_{\Omega^-}\frac{1}{4\pi|x-y|}\dds x \dd y\,.
\end{align}
Here the first two terms represent the repulsive self-interaction energies of $\Omega^+$ and $\Omega^-$, respectively, and the third term represents the
attractive mutual interaction between $\Omega^+$ and $\Omega^-$. The present model also arises in the modelling of
copolymer-homopolymer blends, see the remarks on related models below.

The first natural question concerns the existence of minimizers for this variational problem. Since the functional does
not include any interfacial penalization, and we just have a uniform bound on the charge density, the natural topology for the compactness of minimizing sequences is the
weak*-topology in $L^\infty(\R^3)$. While the lower semicontinuity of the functional follows from standard arguments in potential
theory, a non-trivial issue lies in the fact that the limit distribution could include intermediate densities of charge,
in the sense that the limit function could attain values in the whole interval $[0,1]$: as a result, the limit
configuration might not be admissible for our problem. We will however show below that minimizer only take values in $\{0,1\}$, which allows to bring the negatively charged region and the positively charged one as ``close'' together as possible.

Our next aim is to identify specific properties of the optimal set. Here we first establish a \textit{charge neutrality} phenomenon: the total negative charge of the optimal configuration equals the given positive one, i.e. $|\Omega^-|=|\Omega^+|$. In particular, this shows that configurations with nonzero total net charge are unstable in this sense. We also study the case where the total negative charge is prescribed: we discuss the minimization of the energy under the additional volume constraint $|\Omega^-|=\lambda$, and analyze the dependence of the solution on the parameter $\lambda$ (Theorem~\ref{thm-constrained}), proving that a minimizer exists if and only if $\lambda\leq|\Omega^+|$.
The issue of charge neutrality is a central question for systems including interacting positive and negative charges. For instance, we refer to the work of Lieb and Simon \cite{LieSim}, where charge neutrality is shown for minimizers of the Thomas-Fermi energy functional for atomic structures, in the context of quantum mechanics.
Another related question is whether the maximal negative \textit{ionization} (the number of extra electrons that a neutral atom can bind) remains small: we mention in particular the so-called \textit{ionization conjecture}, which gives an upper bound on the number of electrons that can be bound to an atomic nucleus. For some results in this direction, see e.g. \cite{BenLie,Lieb84,LiSiSiTh,Sol03}.

A second remarkable property of minimizers is that complete \textit{screening} is achieved (Theorem~\ref{thm-screening}): the
negative charge tends to arrange itself in a layer around the boundary of $\Omega^+$ in such a way to cancel the Coulomb
potential exerted by the positive charge. Indeed, the net potential 
\begin{align*} 
  \phi(x):= \int_{\Omega^+}\frac{1}{4\pi|x-y|}\dds y - \int_{\Omega^-}\frac{1}{4\pi|x-y|}\dds y
\end{align*}
of the optimal configuration
actually vanishes in the external uncharged region $\R^3\setminus(\overline{\Omega^+\cup\Omega^-})$. Since it can be
proved (under some mild regularity assumptions on the boundary of $\Omega^+$) that $\phi$ is strictly positive in the closure of $\Omega^+$, this implies that the positive phase is completely
surrounded by $\Omega^-$ and the distance between $\Omega^+$ and the uncharged space is strictly positive; moreover,
each connected component of $\Omega^-$ has to touch the boundary of $\Omega^+$. Notice that, although the minimizer $\Omega^-$ is in general defined up to a Lebesgue-negligible set, we can always select a precise representative, which is in particular an open set, see \eqref{def-Ome-}. Such properties are established by combining information from the Euler-Lagrange equations with \textit{ad hoc} arguments based on the maximum principle.

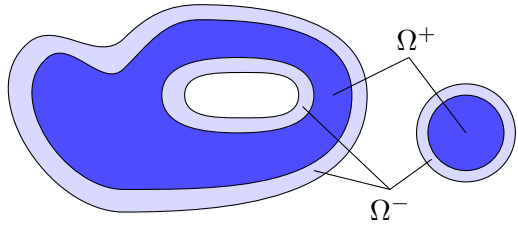
\begin{figure}
\begin{tikzpicture}[scale=0.5]
\draw [fill=blue!15!white] (-2.5,2.4) to [out=40, in=225] (0,2.5) to [out=45, in=180 ] (2,3.4) to [out=0, in=90] (6.4,1) to [out=270, in=0] (0,-2.1) to [out=180, in=220] (-2.5,2.4);
\draw [fill=blue!70!white] (-2,2) to [out=40, in=225] (0,1.7) to [out=45, in=180 ] (2,3) to [out=0, in=90] (6,1) to [out=270, in=0] (0,-1.5) to [out=180, in=220] (-2,2);
\path [fill=white] (1,1) to [out=90, in=180] (3,2) to [out=0, in=90] (5,1) to [out=270, in=0] (3,0) to [out=180, in=270] (1,1);
\draw [fill=blue!15!white] (1,1) to [out=90, in=180] (3,2) to [out=0, in=90] (5,1) to [out=270, in=0] (3,0) to [out=180, in=270] (1,1);
\draw [fill=white] (1.6,1) to [out=90, in=180] (3,1.6) to [out=0, in=90] (4.6,1) to [out=270, in=0] (3,0.4) to [out=180, in=270] (1.6,1);
\draw [fill=blue!15!white] (9,1.3) to [out=0, in=90] (10.3,0) to [out=270, in=0] (9,-1.3) to [out=180, in=270] (7.7,0) to [out=90, in=180] (9,1.3);
\draw [fill=blue!70!white] (9,1) to [out=0, in=90] (10,0) to [out=270, in=0] (9,-1) to [out=180, in=270] (8,0) to [out=90, in=180] (9,1);
\node at (7.7,2.5) {$\Omega^+$};
\draw (7.5,2) to (5.5,1);
\draw (7.5,2) to (9,0);
\node at (7,-2) {$\Omega^-$};
\draw (7,-1.5) to (8.1,-0.7);
\draw (7,-1.5) to (4.7,0.7);
\draw (7,-1.5) to (5,-1);
\end{tikzpicture}
\caption{Sketch of the shape of the minimizer $\Omega^-$ (light-shaded area) corresponding to a given configuration $\Omega^+$ (dark-shaded area): the negative charges arrange in a layer around the positively charged region.}
\label{fig-intro}
\end{figure}

The screening result enables us to draw a connection between the classical theory of the obstacle problem and our
model, showing that the potential $\phi$ of a minimizing configuration is actually a solution of the former. In turn,
this allows us to exploit the regularity theory for the \textit{free boundary} of solutions to obstacle-type problems in order to recover further regularity properties of the minimizing set (Theorem~\ref{thm-regularity}).

In the last part of the paper, we investigate the regime in which the charge density of the negative phase is much
higher than the positive one, which is modeled mathematically by rescaling the negative charge density by $\frac{1}{\e}$
and letting $\e\to 0$. In this case, we prove $\Gamma$-convergence to a limit model where the distribution of the
negative charge is described by a positive Radon measure (Theorem~\ref{thm-gammaconv}); in turn, we show that the
optimal configuration for this limit model is attained by a surface distribution of charge on $\partial\Omega^+$
(Proposition~\ref{prp-surfacecharge}).

\medskip

\textbf{Related models with Coulomb interaction.}
Capet and Friesecke investigated in \cite{CapFri} a closely related discrete model, where the optimal
distribution of $N$ electrons of charge -1 in the potential of some fixed positively charged atomic nuclei is determined
in the large $N$ limit. Under a hard-core radius constraint, which prevents electrons from falling into the nucleus,
they show via $\Gamma$-convergence that the negative charges tend to uniformly distribute on spheres around the atomic
nuclei, the number of electrons surrounding each nucleus matching the total nuclear charge; in particular, the potential
exerted by the nuclei is screened and in the limit the monopole moment, the higher multipole moments of each atom, and
the interaction energy between atoms vanish. Hence our analysis on charge neutrality, screening and on the limit surface charge model could also be interpreted as a macroscopic counterpart
of the discrete analysis developed in \cite{CapFri}.

Due to the universal nature of Coulomb interaction, we expect that our results could be also instrumental in the investigation of more general models, where an interfacial penalization is possibly added and the phase $\Omega^+$ is no longer fixed.
Recently, the problem with a single self-interacting phase of prescribed volume, surrounded by a neutral phase, has been extensively studied (see e.g. \cite{BonCri,FraLie,Jul,KnuMur1,KnuMur2,LuOtto}). 

A minimization problem for two phases with Coulomb interactions and an interfacial energy term arises for instance in the modeling of diblock-copolymers.  These consist of two subchains of different type that repel each other but are chemically bonded, leading to a phase separation on a mesoscopic scale. Variational models derived by mean
field or density functional theory 
\cite{MaSc94,Leib80,OhKa86,BaOo90,ChRe03} take the form of a nonlocal Cahn--Hilliard energy. A subsequent strong segregation limit results in a nonlocal perimeter problem \cite{ReWe00} with a Coulomb-type energy contribution. For a mathematical analysis of diblock-copolymer models see for example \cite{AlCo09,ReWe08,ChPW09}.

In a mixture of diblock-copolymers and homopolymers an additional \textit{macroscopic
phase separation} in homopolymers and diblock-copolymers occurs, where now three phases have to be distinguished. Choksy and Ren developed a density functional theory
\cite{ChRe05}, a subsequent strong segregation reduction leads to an energy of the form
\begin{align}
   c_0\Ha^2(\partial (\overline{\Omega^+\cup \Omega^-})) + c_1 \Ha^2(\partial \Omega^+) + c_2 \Ha^2(\partial \Omega^-) + I(\Omega^+,\Omega^-) \label{eq-energy-bcp}
\end{align}
where $\Omega^+,\Omega^-$ are constrained to be open sets of finite perimeter, to have disjoint supports and equal
total volume, and where $I(\Omega^+,\Omega^-)$ denotes the Coulomb interaction energy defined in \eqref{def-E-intro}. In \cite{GenPel} the existence of minimizers
has been shown in one space dimension and lower and upper bounds on the energy of minimizers have been presented in higher dimensions. Furthermore, in \cite{GePe09} the stability of layered structures has been investigated.

The model that we analyze in the present paper can be understood as a reduction of \eqref{eq-energy-bcp} to the case $c_0=c_2=0$ and a
minimization in $\Omega^-$ only, for $\Omega^+$ given.

\medskip

\textbf{Structure of paper.}  The paper is organized as follows. The notation and the variational setting of the problem
are fixed in Section~\ref{sec-setting}, where we also state the main results. A detailed discussion of the
relaxed model with intermediate densities of charge, instrumental for the analysis of the original problem, is performed
in Section~\ref{sec-relaxed}, where the main existence theorems are proved. Section~\ref{sec-screening} contains the
proof of the screening property, while in Section~\ref{sec-obstacle} the relation with the obstacle problem and its
consequences are discussed (in particular, we prove the regularity of the minimizer). Section~\ref{sec-surface} is
devoted to the analysis of a limit surface-charge model. Finally, spherically symmetric configurations are explicitly
discussed in the concluding Appendix.

\medskip

\textbf{Notation.} We denote the ball centered at a point $x\in\R^3$ with radius $\rho>0$ by $B_\rho(x)$,
writing for simplicity $B_\rho$ for balls centered at the origin. For any measurable set $E \subset \R^3$, we denote its
Lebesgue measure by $|E| := \LL^3(E)$. The integral average of an integrable function $f$ over a measurable set $E$ with
positive measure is $\med_E f := \frac{1}{|E|}\int_E f$. Sublevel sets of a function $f$ are indicated by
$\{f<\alpha\}:=\{x\in\R^3 : f(x) < \alpha\}$, and a similar notation is used for level sets and superlevel sets.

  
\section{Setting and main results} \label{sec-setting} %

Let $\Omega^+\subset\R^3$ be a fixed non-empty, bounded and open set. We assume that $\Ome^+$ is uniformly, positively charged with charge density $1$. For any uniformly, negatively charged measurable set $\Ome^-$ with finite Lebesgue measure, we consider the corresponding Coulombic energy $E(\Omega^-):=I(\Omega^+,\Omega^-)$, thus
\begin{align} \label{def-E} 
	E(\Ome^-)= \int_{\Omega^+}\int_{\Omega^+}\frac{1}{4\pi|x-y|} \dds x \dd y +
  \int_{\Omega^-}\int_{\Omega^-}\frac{1}{4\pi|x-y|}\dds x \dd y -2 \int_{\Omega^+}\int_{\Omega^-}\frac{1}{4\pi|x-y|}\dds x \dd y\,.
\end{align}

Our aim is to find the optimal configuration of the negative charge,
under the assumption that the two oppositely charged regions do not overlap. We hence consider the minimization problem
\begin{align} \label{min-unc} %
  \min \ \Bigl\{ E(\Ome^-) \ :\ \Omega^-\subset\R^3 \text{ measurable, } |\Omega^+\cap\Omega^-|=0,\ |\Omega^-|<\infty \Bigr\}
\end{align}
(notice that we require that $\Omega^-$ has finite volume in order for the energy \eqref{def-E} to be well defined). We also consider the closely related minimization problem where the total negative charge is prescribed, which for $\lambda>0$ given yields
\begin{align} \label{min-con} %
  \min \ \Bigl\{ E(\Ome^-) \ :\ \Omega^-\subset\R^3\text{ measurable, } |\Omega^+\cap\Omega^-|=0,\ |\Ome^-| = \lam \ \Bigr\}\,.
\end{align}

\medskip

The energy \eqref{def-E} can be expressed in different ways. We will usually denote $u^+:=\chi_{\Omega^+}$ and
$u:=\chi_{\Omega^-}$, where $\chi_{\Ome^\pm}$ are the characteristic functions of the sets $\Ome^\pm$. For given charge
densities $u^+,u$, the associated \textit{potential} $\phi$ is defined as
\begin{align} \label{def-phi} %
  \phi(x) := \int_{\R^3} \frac{u^+(y)-u(y)}{4\pi|x-y|} \dds y\,.
\end{align}
Notice that the potential $\phi$ solves the elliptic problem
\begin{align*}
  \begin{cases}
    -\Delta\phi = u^+-u,\\
    \lim_{|x|\to\infty} |\phi(x)| =0
  \end{cases}
\end{align*}
(see Lemma~\ref{lem-decay} for the second condition). By classical elliptic regularity, we have $\phi\in
C^{1,\alpha}(\R^3)$ for every $\alpha<1$ and $\phi\in W^{2,p}_{\loc}(\R^3)$ for all $1\leq p<\infty$. In addition, $\phi\in L^p(\R^3)$  for all $p>3$, and $\nabla\phi\in L^q(\R^3)$ for all $q>\frac{3}{2}$ by \cite[Theorem~4.3, Theorem~10.2]{LiLo01}. A standard argument, based on integration by parts, shows that the energy of a
configuration can be expressed in terms of the associated potential as
\begin{align*}
  E(\Ome^-) = \int_{\R^3}|\nabla\phi|^2\dds x\,.
\end{align*}
Finally, yet another way to represent the energy is in terms of Sobolev norms: indeed,
\begin{align*} 
  E(\Ome^-) = \|u^+ - u\|^2_{H^{-1}(\R^3)}\,.
\end{align*}

\medskip

We now state the main findings of our analysis. We first consider the unconstrained minimization problem \eqref{min-unc}, in which the total negative charge is not \textit{a priori} prescribed, proving existence and uniqueness of a minimizing configuration; interestingly, it turns out that the volume of the minimizer matches the volume of the positive charge and the system exhibits a charge neutrality phenomenon.

\begin{theorem}[The unconstrained problem: Existence and uniqueness]\label{thm-unconstrained} %
  Let $\Omega^+\subset\R^3$ be a fixed, non-empty, bounded and open set. Then, the minimum problem \eqref{min-unc} admits a unique (up to a set of zero Lebesgue measure) solution
  $\Ome^- \subset \R^3$. Furthermore, the minimizer satisfies the saturation property
  \begin{align*}
    |\Omega^-|= |\Ome^+|\,.
  \end{align*}
\end{theorem}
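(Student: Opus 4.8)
The plan is to relax the admissible class and then show that the relaxed minimizer is automatically the characteristic function of a set of finite volume. Set $u^+:=\chi_{\Omega^+}$ and introduce the convex class
\[
  K:=\bigl\{u\in L^\infty(\R^3):0\le u\le 1,\ u=0\text{ a.e.\ on }\Omega^+,\ u^+-u\in H^{-1}(\R^3)\bigr\},
\]
together with the relaxed energy $\EE(u):=\|u^+-u\|_{H^{-1}(\R^3)}^2=\int_{\R^3}|\nabla\phi|^2\dds x$, where $\phi$ is the potential \eqref{def-phi} of $u^+,u$. As the square of a Hilbert norm composed with the affine map $u\mapsto u^+-u$, $\EE$ is convex and weakly-$*$ lower semicontinuous on $L^\infty(\R^3)$. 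The direct method then yields a minimizer $u_*$ of $\EE$ over $K$: along a minimizing sequence $u_n$ the potentials $\phi_n$ are bounded in $\dot H^1(\R^3)\hookrightarrow L^6(\R^3)$, so up to subsequences $u_n\wstar u_*$ in $L^\infty(\R^3)$ and $\nabla\phi_n\wto\nabla\phi_*$ in $L^2(\R^3)$, one has $0\le u_*\le 1$ and $u_*=0$ on $\Omega^+$, passing to the limit in $-\Delta\phi_n=u^+-u_n$ identifies $\phi_*$ as the potential of $u_*$, and weak lower semicontinuity of the Dirichlet integral gives $\EE(u_*)\le\liminf_n\EE(u_n)$. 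Working with the homogeneous space $\dot H^1$ rather than imposing an a priori mass bound is what makes this step robust against escape of mass to infinity; finiteness of $|\{u_*>0\}|$ is recovered a posteriori.

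The core step is to show that $u_*$ takes only the values $0$ and $1$. For every $v\in K$ the segment $u_*+t(v-u_*)$ lies in $K$ for $t\in[0,1]$, and differentiating $t\mapsto\EE(u_*+t(v-u_*))$ at $t=0^+$ gives the variational inequality $\int_{\R^3}\phi_*(v-u_*)\dds x\le 0$ for all $v\in K$. Testing with competitors that coincide with $u_*$ outside an arbitrary set of finite measure forces $u_*=1$ a.e.\ on $\{\phi_*>0\}\setminus\Omega^+$ and $u_*=0$ a.e.\ on $\{\phi_*<0\}$. On the contact set $\{\phi_*=0\}$ I would use that $\phi_*\in W^{2,p}_{\loc}(\R^3)$, so that $\Delta\phi_*=0$ a.e.\ on the level set $\{\phi_*=0\}$; hence $u^+-u_*=-\Delta\phi_*=0$ a.e.\ there, and combined with $u_*=0$ on $\Omega^+$ this gives $u_*=0$ a.e.\ on $\{\phi_*=0\}$. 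Therefore $u_*=\chi_{\Omega^-}$ a.e.\ with $\Omega^-:=\{\phi_*>0\}\setminus\Omega^+$. Once $|\Omega^-|<\infty$ is known (established below), $\Omega^-$ is admissible for \eqref{min-unc}, and since $\chi_{\tilde\Omega^-}\in K$ for every competitor $\tilde\Omega^-$ we get $E(\Omega^-)=\EE(u_*)\le\EE(\chi_{\tilde\Omega^-})=E(\tilde\Omega^-)$, so $\Omega^-$ minimizes \eqref{min-unc}.

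Uniqueness follows from strict convexity of $f\mapsto\|f\|_{H^{-1}}^2$: if $u_1,u_2$ both minimize $\EE$ over $K$, comparing with their midpoint (again in $K$) forces $u^+-u_1=u^+-u_2$ in $H^{-1}$, i.e.\ $u_1=u_2$ a.e.; since any solution of \eqref{min-unc} yields a characteristic function in $K$ realising the relaxed minimum, the solution of \eqref{min-unc} is unique up to a Lebesgue-null set.

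For the saturation property, I would first show $\phi_*\ge 0$ everywhere: on the open set $\{\phi_*<0\}$ one has $u_*=0$, so $-\Delta\phi_*=u^+\ge 0$ there, and $\phi_*$ is thus superharmonic on $\{\phi_*<0\}$ with zero boundary values and limit $0$ at infinity, which by the minimum principle forces $\phi_*\ge 0$ on that set — a contradiction unless it is empty. Next, Newton's mean value property gives $R\,\med_{\partial B_R}\phi_*\to\frac{1}{4\pi}\int_{\R^3}(u^+-u_*)$ as $R\to\infty$ (the right-hand side being $-\infty$ if $u_*\notin L^1$); since $\phi_*\ge 0$ this limit is $\ge 0$, which forces $u_*\in L^1(\R^3)$ and $m:=|\Omega^+|-|\Omega^-|=\int_{\R^3}(u^+-u_*)\ge 0$, in particular $|\Omega^-|<\infty$, completing the earlier steps. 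Finally, if $m>0$ the same asymptotics give $\med_{\partial B_R}\phi_*\ge c/R$ for large $R$ and some $c>0$, which together with the boundedness of $\phi_*$ would force $\{\phi_*>0\}$, hence $\Omega^-$, to have infinite measure — a contradiction. Hence $m=0$, that is $|\Omega^-|=|\Omega^+|$. The main obstacle throughout is the bang-bang step of the second paragraph: converting the variational inequality into the pointwise dichotomy and, above all, excluding intermediate densities on the contact set.
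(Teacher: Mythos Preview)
Your sketch is essentially correct and follows a genuinely different route from the paper. The paper first solves a \emph{mass-constrained} relaxed problem over $\AA_\lambda=\{u\in L^1(\R^3;[0,1]):u=0\text{ on }\Omega^+,\ \int u\le\lambda\}$ (Proposition~\ref{prp-existence}), proves a saturation identity $\int u_\lambda=\min\{\lambda,m\}$ (Proposition~\ref{prp-saturation}), and then shows the constrained minimizer for $\lambda=m$ is $\{0,1\}$-valued by testing with $\eta=(\phi-c)\chi_{\{\delta<u<1-\delta\}}$ to force $\phi$ constant on the intermediate-density set; Theorem~\ref{thm-unconstrained} is then deduced in one line from Theorem~\ref{thm-constrained}. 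You instead relax with no mass bound, imposing only the energetic constraint $u^+-u\in H^{-1}$, and organise the bang-bang step around the sign of $\phi_*$: the variational inequality pins $u_*$ on $\{\phi_*\neq0\}$ and Stampacchia's lemma handles $\{\phi_*=0\}$. This is closer in spirit to obstacle-problem arguments and bypasses the constrained problem entirely; the paper's detour, by contrast, delivers Theorem~\ref{thm-constrained} along the way and keeps $u\in L^1$ throughout, so that decay of $\phi$ and the mean-value identities are classical from the outset.

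One point to tighten: you appeal to ``limit $0$ at infinity'' for $\phi_*$ both in the minimum-principle step ($\phi_*\ge0$) and in the Newton mean-value asymptotics, but at those moments $u_*\in L^1$ is not yet known, so the convolution representation and pointwise decay of $\phi_*$ are unjustified. Both uses can be repaired within your $\dot H^1\hookrightarrow L^6$ framework: for positivity, test $-\Delta\phi_*=u^+-u_*$ against $\phi_*^-:=\max(-\phi_*,0)\in\dot H^1$ to obtain $\int|\nabla\phi_*^-|^2\le0$ directly; for integrability, the $L^6$ bound yields a sequence $R_n\to\infty$ with $\med_{\partial B_{R_n}}\phi_*\to0$, and since $R\mapsto\int_{B_R}u_*$ is nondecreasing the identity $\frac{d}{dR}\med_{\partial B_R}\phi_*=-\frac{1}{4\pi R^2}\int_{B_R}(u^+-u_*)$ then forces $\int u_*\le m$. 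Once $u_*\in L^1$ is in hand, pointwise decay follows and your final contradiction ($m>0\Rightarrow|\{\phi_*>0\}|=\infty$) goes through as written.
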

We also obtain a corresponding result for the case \eqref{min-con} when the total negative charge is prescribed.
\begin{theorem}[The constrained problem: Existence and uniqueness/Nonexistence]\label{thm-constrained} %
  Let $\Omega^+\subset\R^3$ be a fixed, non-empty, bounded and open set. Then:
  \begin{enumerate}
  \item For every $\lam \leq |\Ome^+|$, there is a unique (up to a set of zero Lebesgue measure) minimizer $\Ome^-$ of
    \eqref{min-con}.
  \item For every $\lam > |\Ome^+|$, there is no global minimizer of \eqref{min-con}.
  \end{enumerate}
\end{theorem}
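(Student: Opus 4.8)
I would first pass to the relaxed, convex version of \eqref{min-con}: for $\lam>0$ set $\KK_\lam:=\{u\in L^\infty(\R^3):0\le u\le 1,\ u=0\text{ a.e.\ on }\Omega^+,\ \int_{\R^3}u=\lam\}$ and minimize $E(u):=\|u^+-u\|_{H^{-1}(\R^3)}^2$ over $\KK_\lam$, where $u^+:=\chi_{\Omega^+}$. Since $E$ is the square of a Hilbert-space norm precomposed with the affine map $u\mapsto u^+-u$, it is strictly convex; as $\KK_\lam$ is convex, $E$ has \emph{at most one} minimizer on $\KK_\lam$, unique up to a Lebesgue-null set. It then remains to (i) exhibit a minimizer when $\lam\le|\Omega^+|$ and check it is $\{0,1\}$-valued, so that it solves \eqref{min-con}, and (ii) rule out the existence of a minimizer when $\lam>|\Omega^+|$. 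The borderline case $\lam=|\Omega^+|$ is in fact immediate from Theorem~\ref{thm-unconstrained}: its minimizer has volume $|\Omega^+|$, hence is admissible for \eqref{min-con}, and conversely every competitor for \eqref{min-con} competes for \eqref{min-unc}, so the two problems share minimizers.

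\textbf{Part (i): $\lam\le|\Omega^+|$.} For existence I run the direct method, the only genuine obstruction being loss of mass at infinity. From a minimizing sequence one extracts $u_n\wstar u$ in $L^\infty$; because $0\le u_n\le1$ the sequence is bounded in $L^{6/5}(\R^3)\hookrightarrow H^{-1}(\R^3)$, and splitting off the far-away tails of $u_n$ (whose Coulomb interaction with the bulk vanishes) together with weak lower semicontinuity of the $H^{-1}$-norm gives $E(u)\le\liminf_n E(u_n)$; thus $u$ minimizes $E$ over $\{v\in L^\infty:0\le v\le1,\ v=0\text{ on }\Omega^+,\ \int v\le\lam\}$, with $\mu:=\int u\le\lam$. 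I then claim $\mu=\lam$. Writing $\phi_u$ for the potential of $u^+-u$, the one-sided Euler--Lagrange inequalities (available because, if $\mu<\lam$, one may add charge) give $\phi_u\le0$ a.e.\ on $\{u=0\}\setminus\Omega^+$, $\phi_u\ge0$ a.e.\ on $\{u=1\}$, and $\phi_u\equiv\text{const}$ a.e.\ on $\{0<u<1\}$; since $\phi_u\in W^{2,p}_{\loc}$, being constant a.e.\ on a set forces $\Delta\phi_u=0$ a.e.\ there, i.e.\ $u=u^+=0$ there, so $|\{0<u<1\}|=0$ and $u=\chi_{\Omega^-}$ is $\{0,1\}$-valued. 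But then $\int(u^+-u)=|\Omega^+|-\mu>0$ (this is exactly where $\lam\le|\Omega^+|$ is used), so the far-field behaviour of the Newtonian potential forces $\phi_u$ to be positive somewhere in $\R^3\setminus\overline{\Omega^+\cup\Omega^-}$, where however $\phi_u\le0$ --- a contradiction. Hence $\mu=\lam$, $u\in\KK_\lam$ is the unique relaxed minimizer, and the same level-set argument (now with a Lagrange multiplier for the fixed mass) shows it is $\{0,1\}$-valued, $u=\chi_{\Omega^-}$ with $|\Omega^-|=\lam$. Since this $u$ is admissible for \eqref{min-con}, the infimum of \eqref{min-con} equals that of the relaxed problem, so every minimizer of \eqref{min-con} is a relaxed minimizer and hence coincides with $\Omega^-$ up to a null set.

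\textbf{Part (ii): $\lam>|\Omega^+|$.} Let $\Omega^-_0$ be the unconstrained minimizer from Theorem~\ref{thm-unconstrained}, so $E(\chi_{\Omega^-_0})=\min\eqref{min-unc}=:E_0$ and $|\Omega^-_0|=|\Omega^+|$. First I show $\inf\eqref{min-con}\le E_0$ by placing the excess volume $\lam-|\Omega^+|$ as a thin spherical shell $S_R$ of fixed volume $\lam-|\Omega^+|$, radius $R$, at distance $\to\infty$ from $\Omega^+\cup\Omega^-_0$, and testing with $\Omega^-_0\cup S_R$: this set is admissible for \eqref{min-con} when $R$ is large, and, expanding $E(\Omega^-_0\cup S_R)$, the cross term (the Coulomb interaction between $S_R$ and the bounded configuration $\Omega^+\cup\Omega^-_0$) tends to $0$, while the self-energy of a thin shell of given volume and radius $R$ is $O(1/R)$; hence $E(\Omega^-_0\cup S_R)\to E_0$. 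Now suppose \eqref{min-con} had a minimizer $\Omega^-$. Then $\Omega^-$ is also admissible for \eqref{min-unc} (measurable, disjoint from $\Omega^+$, finite volume), so $E(\chi_{\Omega^-})\ge E_0$; combined with $E(\chi_{\Omega^-})=\inf\eqref{min-con}\le E_0$ this forces $E(\chi_{\Omega^-})=E_0$, i.e.\ $\Omega^-$ is a minimizer of \eqref{min-unc}. By the uniqueness statement of Theorem~\ref{thm-unconstrained}, $\Omega^-=\Omega^-_0$ up to a null set --- contradicting $|\Omega^-|=\lam\ne|\Omega^+|=|\Omega^-_0|$. Hence no minimizer exists.

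\textbf{Main obstacle.} The crux is the compactness in Part~(i): excluding escape of mass at infinity is exactly where the hypothesis $\lam\le|\Omega^+|$ enters, and it requires combining the sign of $\phi_u$ coming from the Euler--Lagrange inequalities with the positivity of the net charge and the far-field decay of Newtonian potentials --- a delicate point, since the minimizer is a priori only of finite volume, not bounded. The remaining ingredients --- strict convexity, the level-set (``bang-bang'') argument, and the thin-shell construction --- are routine, the only minor technical issue being the decay of the potential of the unconstrained minimizer, which follows from Lemma~\ref{lem-decay}.
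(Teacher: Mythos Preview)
Your overall strategy---relax to densities in $[0,1]$, use strict convexity for uniqueness, establish saturation and the bang-bang property via the Euler--Lagrange conditions together with Stampacchia's lemma, and for nonexistence push excess charge to infinity on a thin shell---is exactly the route the paper takes. The technical ingredients (the level-set argument showing $|\{0<u<1\}|=0$, the far-field positivity forcing $\mu=\lambda$, and the asymptotic vanishing of the shell's self- and cross-energies) all match the paper's Proposition~3.2, Lemma~3.3, Proposition~3.5 and the proof of Step~2.

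There is, however, a genuine logical gap in your organization: you invoke Theorem~\ref{thm-unconstrained} both in your Strategy (for the borderline $\lambda=|\Omega^+|$) and, crucially, in Part~(ii) (existence, uniqueness and volume of $\Omega_0^-$). In this paper Theorem~\ref{thm-unconstrained} is \emph{deduced from} Theorem~\ref{thm-constrained} (see the sentence immediately following its statement and the one-paragraph proof at the end of Section~3), so your argument as written is circular. The paper avoids this by working directly with the relaxed minimizer $u_m$: Proposition~3.5 shows $u_\lambda=u_m$ for all $\lambda\ge m$, so any hypothetical constrained minimizer $\chi_{\Omega^-}$ with $|\Omega^-|=\lambda>m$ satisfies $\mathcal E(\chi_{\Omega^-})>\mathcal E(u_m)$ strictly (by uniqueness of the relaxed minimizer), and the shell construction then produces admissible competitors with energy below $\mathcal E(\chi_{\Omega^-})$. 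Your argument is easily repaired along the same lines: after your Part~(i) you have the constrained minimizer $\Omega^-_m$ at $\lambda=m$ in hand, and you can run the shell construction and the strict-inequality comparison against $\Omega^-_m$ directly, without ever naming Theorem~\ref{thm-unconstrained}. But as stated, Part~(ii) presupposes a result that itself rests on the theorem you are proving.
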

We will first prove Theorem~\ref{thm-constrained}, then Theorem~\ref{thm-unconstrained} is an easy consequence of this
theorem.  The main technical difficulty arising when we try to apply the direct method of the Calculus of Variations to
prove the existence of a minimizer is clearly the following: for a minimizing sequence $(\Omega^-_n)_n$, we can pass to
a subsequence such that $\chi_{\Omega^-_n}\wto u$ weakly* in $L^\infty(\R^3)$, but we can not guarantee that $u$ takes
values in $\{0,1\}$: in other words, the limit object might no longer be a set, with a uniform distribution of
charge. This obstacle will be bypassed by considering the relaxed problem where we allow for intermediate densities of
charge, and showing that a minimizer of this auxiliary problem is in fact a minimizer of the original one (see
Section~\ref{sec-relaxed} for the proofs of these results).
We remark that a similar strategy was also used in \cite{GenPel} for a related one-dimensional model.

\medskip

Having established existence of a solution to \eqref{min-unc}, we now discuss further properties of the minimizer. The
following theorem, whose proof is given in Section~\ref{sec-screening} and relies on the maximum principle, deals with
the \textit{screening effect} realized by the optimal configuration: the associated potential vanishes in the uncharged
region. Actually, it turns out that such a property (together with the nonnegativity of the potential) uniquely characterizes the minimizer (see Remark~\ref{rem-screening}).
\begin{theorem}[Screening]\label{thm-screening}
Assume that $\Omega^+\subset\R^3$ is a bounded and open set with Lipschitz boundary.
Let $\Omega^-$ be a representative of the minimizer of \eqref{min-unc} and let $\phi$ be the corresponding potential, defined in \eqref{def-phi}. Then $\phi \geq 0$ in $\R^3$ and
  \begin{align} \label{eq-screen}
    \phi=0 \qquad\text{ almost everywhere in } \R^3\setminus({\Omega^+\cup\Omega^-})\,.
  \end{align}
  After possibly changing $\Ome^-$ on a set of Lebesgue measure zero, we have
  \begin{align} \label{def-Ome-} %
    \Ome^- =  \{  \phi > 0 \} \BS \overline{\Ome^+}\,.
  \end{align}
  If $\Ome^+$ satisfies an interior ball condition, then we also have $\phi>0$ in $\overline{\Omega^+}$.
\end{theorem}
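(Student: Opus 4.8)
The plan is to extract the first-order optimality (Euler--Lagrange) conditions for the relaxed — hence convex — minimization problem and to combine them with the maximum principle. Writing $u=\chi_{\Ome^-}$ and $u^+=\chi_{\Ome^+}$, the minimizer satisfies the variational inequality
\begin{align*}
  \int_{\R^3}\phi\,(v-u)\dds x\ \le\ 0
\end{align*}
for every competitor $v$ with $0\le v\le 1$ a.e.\ and $v=0$ a.e.\ on $\Ome^+$; this follows by differentiating $t\mapsto E\big((1-t)u+tv\big)=\|(u^+-u)+t(u-v)\|_{H^{-1}(\R^3)}^2$ at $t=0^+$ and integrating by parts, which is legitimate since $u^+-u\in L^1\cap L^\infty$ and the relevant potentials decay at infinity by Lemma~\ref{lem-decay}. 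Testing with $v=\chi_{\{\phi>0\}\setminus\Ome^+}$ and using that $|\pOme^+|=0$ (as $\Ome^+$ is Lipschitz), this forces the pointwise relations $u=1$ a.e.\ on $\{\phi>0\}\setminus\overline{\Ome^+}$ and $u=0$ a.e.\ on $\{\phi<0\}$.

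Next I would prove $\phi\ge 0$. On the open set $\{\phi<0\}$ we have $u=0$ a.e.\ by the above, so $-\Delta\phi=u^+\ge 0$ there, i.e.\ $\phi$ is superharmonic on $\{\phi<0\}$; since $\phi$ is continuous it vanishes on $\partial\{\phi<0\}$ and tends to $0$ at infinity, so the minimum principle — applied on $\{\phi<0\}\cap B_R$ and letting $R\to\infty$ to treat unbounded components — yields $\phi\ge0$ on $\{\phi<0\}$, a contradiction unless $\{\phi<0\}=\emptyset$.

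For the screening identity and \eqref{def-Ome-}, the key point is that on the level set $\{\phi=0\}$ the function $\phi$ attains an interior minimum, so (by the standard fact that the second derivatives of a $W^{2,p}_{\loc}$ function vanish a.e.\ on a level set) $-\Delta\phi=0$, hence $u=u^+$, a.e.\ on $\{\phi=0\}$. Since $u=0$ a.e.\ on $\Ome^+$ while $u^+=1$ there, this gives $|\{\phi=0\}\cap\Ome^+|=0$ and $u=0$ a.e.\ on $\{\phi=0\}$. Combining with the optimality relations and $\phi\ge0$, the set $\{u=1\}$ coincides up to a null set with $\{\phi>0\}\setminus\overline{\Ome^+}$; redefining $\Ome^-$ to be exactly the latter — an open set, since $\phi$ is continuous — gives \eqref{def-Ome-}. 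Finally, for a.e.\ $x\in\R^3\setminus(\Ome^+\cup\Ome^-)$ one cannot have $\phi(x)>0$ (otherwise $x\in\{\phi>0\}\setminus\overline{\Ome^+}=\Ome^-$, up to the null set $\pOme^+$), so $\phi=0$ a.e.\ there, which is \eqref{eq-screen}.

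For the last assertion, suppose $\Ome^+$ has an interior ball at each boundary point and $\phi(x_0)=0$ for some $x_0\in\overline{\Ome^+}$; then $x_0$ is a global minimum of $\phi$, so $\nabla\phi(x_0)=0$ because $\phi\in C^1(\R^3)$. In $\Ome^+$ we have $-\Delta\phi=1$, so $\phi$ is smooth and $h:=\phi+\tfrac16|x-x_0|^2$ is harmonic near $x_0$, nonnegative, and vanishes at $x_0$, hence locally constant, so $\phi=-\tfrac16|x-x_0|^2$ near $x_0$, contradicting $\phi\ge 0$; thus $x_0\notin\Ome^+$, and in fact $\phi>0$ throughout $\Ome^+$. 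So $x_0\in\pOme^+$; choosing a ball $B\subset\Ome^+$ with $x_0\in\partial B$, on which $\phi>0=\phi(x_0)$ and $\phi$ is superharmonic, Hopf's lemma gives $\partial_\nu\phi(x_0)<0$ for the outer normal $\nu$ of $B$, contradicting $\nabla\phi(x_0)=0$; hence $\phi>0$ on $\overline{\Ome^+}$. I expect the steps requiring most care to be the rigorous derivation and use of the Euler--Lagrange inequality (integration by parts and decay on an unbounded domain) and the measure-theoretic treatment of the contact set $\{\phi=0\}$.
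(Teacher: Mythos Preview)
Your argument is correct and actually takes a shorter route than the paper's. The key difference lies in how screening is obtained. The paper works with the Euler--Lagrange conditions of the \emph{constrained} relaxed problem (Lemma~\ref{lem-EL}), which only permit mass-neutral or mass-decreasing variations; from these it deduces the comparison $\sup_{A_0}\phi\le\inf_{A^-}\phi$ and then, by a case analysis on whether the essential support of $\Ome^-$ is bounded, together with a Hopf argument on $\partial\Ome^+$ (for which the Lipschitz hypothesis is used to manufacture an interior tangent ball), shows $\inf_{A^-}\phi=0$. You instead exploit that the unconstrained minimizer allows mass-\emph{increasing} competitors: testing with (truncations of) $v=\chi_{\{\phi>0\}\setminus\Ome^+}$ immediately yields $u=1$ a.e.\ on $\{\phi>0\}\setminus\overline{\Ome^+}$, so screening and \eqref{def-Ome-} follow directly once $\phi\ge0$ is known. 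Your use of the Lipschitz assumption reduces to $|\partial\Ome^+|=0$. One technical caveat: your test function $v=\chi_{\{\phi>0\}\setminus\Ome^+}$ need not be integrable a~priori, so you must truncate (e.g.\ to $\{\phi>1/n\}$, which is bounded since $\phi$ is continuous and vanishes at infinity); you flag this, and the fix is routine. The remaining steps --- nonnegativity via the minimum principle, the Stampacchia argument on $\{\phi=0\}$, and the Hopf lemma for strict positivity on $\overline{\Ome^+}$ --- coincide with the paper's Steps~3--4.
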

It is convenient to also introduce a notation for the uncharged region: in view of \eqref{def-Ome-}, we set
\begin{align} \label{def-Ome0} %
  \Ome_0 := \R^3\setminus\overline{\{\varphi>0\}}\,,
\end{align}
which by \eqref{def-Ome-} coincides, up to a set of measure zero, with $\R^3\setminus(\overline{\Omega^+\cup\Omega^-})$. Notice that by \eqref{def-Ome-} and \eqref{def-Ome0} we are selecting precise representatives of the sets $\Omega^-$ and $\Omega_0$, which in general are defined up to a set of Lebesgue measure zero, and that with this choice they are open sets.

\medskip

Based on the screening property and classical maximum principles for subharmonic functions, we establish some further qualitative properties on the shape of the minimizer $\Ome^-$.

\begin{theorem}[Structure of $\Ome^-$]\label{thm-support} %
  Suppose that the assumptions of Theorem~\ref{thm-screening} hold and let $\Omega^-$ be the minimizer of problem \eqref{min-unc}, given by \eqref{def-Ome-}. Then $\Ome^-$ is open, bounded and the following statements hold:
  \begin{enumerate}
  \item\label{it-2.4-1} $\dist (x,\Omega^+)\ \leq\ 2\,|\Omega^+|^{1/3}\quad\text{ for all }x\in {\Omega^-}$;
  \item\label{it-2.4-2}  $\diam\Omega^- \ \leq\ (1+2\sqrt{3})\ \diam\Omega^+$;
  \item\label{it-2.4-3}  for every connected component $V$ of ${\Ome^-}$ we have $\partial V \cap\partial\Omega^+\neq\emptyset$;
  \item\label{it-2.4-4}  if $\Ome^+$ satisfies an interior ball condition, then $\dist(\Ome_0,\partial\Omega^+)>0$.
  \end{enumerate}
\end{theorem}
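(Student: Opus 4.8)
The plan is to read off all four assertions from the screening theorem (Theorem~\ref{thm-screening}), which provides $\phi\ge 0$ on $\R^3$, $\phi=0$ on $\R^3\setminus\overline{\Omega^+\cup\Omega^-}$, and the representation $\Omega^-=\{\phi>0\}\setminus\overline{\Omega^+}$, so that $\Omega^-$ is automatically open. I will use two elementary facts throughout: away from $\overline{\Omega^+}$ one has $u^+-u=0$, so $\phi$ is \emph{subharmonic} on $\R^3\setminus\overline{\Omega^+}$, while on $\Omega^-$ (where $u^+=0$, $u=1$) it solves $\Delta\phi=1$; and by the bathtub principle $\phi\le\phi^+(x):=\int_{\Omega^+}\frac{\dd y}{4\pi|x-y|}\le\frac12\bigl(\frac{3|\Omega^+|}{4\pi}\bigr)^{2/3}=:M$ at every point, the worst case being when $\Omega^+$ is replaced by the ball of equal volume centred at~$x$. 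The essential point is estimate (i); (ii)--(iv) then follow with little extra work, and in particular (i) yields boundedness of~$\Omega^-$.

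For (i) I would use a non-degeneracy comparison built on the obstacle-type structure of $\phi$ outside $\overline{\Omega^+}$. Fix $x_0\in\Omega^-$ and put $d:=\dist(x_0,\Omega^+)>0$; then $B_d(x_0)$ is disjoint from $\overline{\Omega^+}$. On the bounded open set $U:=\Omega^-\cap B_d(x_0)$ (restricted, if necessary, to the component containing $x_0$) the function $w(x):=\phi(x)-\tfrac16|x-x_0|^2$ is harmonic, since $\Delta\phi=1=\Delta\bigl(\tfrac16|x-x_0|^2\bigr)$ there. Its boundary values are controlled: the part of $\partial U$ lying in the open ball $B_d(x_0)$ is contained in $\partial\Omega^-$, where the identity $\Omega^-=\{\phi>0\}\setminus\overline{\Omega^+}$ forces $\phi=0$, so $w\le 0$ there; on the remaining part, lying on $\partial B_d(x_0)$, we have $|x-x_0|=d$ and $\phi\le M$, so $w\le M-\tfrac{d^2}{6}$. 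The maximum principle then gives $0<\phi(x_0)=w(x_0)\le\max\{0,\,M-\tfrac{d^2}{6}\}$, which forces $d^2<6M$, i.e. $d<\sqrt3\,\bigl(\tfrac{3|\Omega^+|}{4\pi}\bigr)^{1/3}\le 2|\Omega^+|^{1/3}$. This proves (i) using only screening and the maximum principle; since $\Omega^+$ is bounded, it also shows $\Omega^-$ is bounded.

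Statement (ii) is then just geometry: for $a,b\in\Omega^-$, choosing nearest points $a',b'\in\overline{\Omega^+}$ gives $|a-b|\le|a-a'|+\diam\Omega^++|b'-b|\le\diam\Omega^++4|\Omega^+|^{1/3}$, and the isodiametric inequality $|\Omega^+|\le\frac{\pi}{6}(\diam\Omega^+)^3$ bounds the last term by $2\sqrt3\,\diam\Omega^+$. For (iii), let $V$ be a connected component of $\Omega^-$; it is open and bounded, $\phi>0$ on $V$, and $\phi$ is subharmonic on $V$ because $\Delta\phi=1>0$ there. If $\partial V\cap\partial\Omega^+=\emptyset$, then any $z\in\partial V$ lies in neither $\Omega^-$ (it cannot lie in $V$ nor in another component of the open set $\Omega^-$) nor $\overline{\Omega^+}$, hence $z\notin\{\phi>0\}\subseteq\overline{\Omega^+}\cup\Omega^-$ and therefore $\phi(z)=0$; the maximum principle then yields $\phi\le 0$ on $V$, contradicting $\phi>0$. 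So $\partial V\cap\partial\Omega^+\ne\emptyset$. Finally, for (iv): under the interior ball condition, Theorem~\ref{thm-screening} gives $\phi>0$ on the compact set $\overline{\Omega^+}$, so $c:=\min_{\overline{\Omega^+}}\phi>0$, whereas $\phi=0$ on $\overline{\Omega_0}$ (by continuity, using $\phi\ge 0$). Since $\phi\in C^{1,\alpha}(\R^3)$ is Lipschitz with some constant $L$ on a fixed bounded neighbourhood of $\overline{\Omega^+}$, a pair $p\in\partial\Omega^+$, $q\in\overline{\Omega_0}$ with $|p-q|<c/L$ would give $c\le\phi(p)=\phi(p)-\phi(q)\le L|p-q|<c$; hence $\dist(\Omega_0,\partial\Omega^+)\ge c/L>0$.

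The only real obstacle is (i): one must hit on the right comparison function and the right domain, and recognise that the obstacle-problem structure of $\phi$ on $\R^3\setminus\overline{\Omega^+}$ (namely $\phi\ge 0$ with $\Delta\phi=1$ on $\{\phi>0\}$) is exactly what makes the quadratic barrier $\tfrac16|x-x_0|^2$ work. The remaining parts are bookkeeping with the maximum principle and the screening identity, the only slightly delicate steps being the topological checks that the relevant boundary points of $\Omega^-$ sit in the null set $\{\phi=0\}$.
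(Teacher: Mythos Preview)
Your proof is correct and follows essentially the same route as the paper: the quadratic barrier $w=\phi-\tfrac16|x-x_0|^2$ on $B_d(x_0)\cap\{\phi>0\}$ together with the rearrangement bound on $\phi$ gives~(i), and (ii)--(iv) are then deduced, as in the paper, from the triangle/isodiametric inequalities, the maximum principle on a component, and continuity of $\phi$ combined with $\phi>0$ on $\overline{\Omega^+}$. The only cosmetic difference is that the paper works with $r<\dist(x_0,\Omega^+)$ and lets $r\nearrow d$, whereas you take $r=d$ directly; this is harmless since $B_d(x_0)\cap\overline{\Omega^+}=\emptyset$.
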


Notice that, as a consequence of Theorem~\ref{thm-screening} and Theorem~\ref{thm-support}, the potential $\phi$ of the minimizing configuration has compact support. 
We complete our analysis of the minimum problem \eqref{min-unc} by discussing some further properties of the minimizer, included the regularity of its boundary. This relies heavily on the observation that, as a consequence of Theorem~\ref{thm-screening}, the potential $\phi$ associated with a minimizer of \eqref{min-unc} is in fact a solution to a classical obstacle problem.
Indeed, as a consequence of the characterization \eqref{def-Ome-}, $\phi$ solves
\begin{align} \label{eq-obstacle1}
  \begin{cases}
    \Delta\phi = \chi_{\{\phi>0\}} & \text{in }\R^3\setminus\overline{\Omega^+},\\
    \phi\geq0\ .
  \end{cases}
\end{align}
It then follows that $\phi$ solves the obstacle problem
\begin{equation*} 
  \min \biggl\{ \int_{\R^3\setminus\overline{\Omega}^+} \Bigl( |\nabla\psi|^2 +2\psi \Bigr)\dds x
  \  : \  \psi\in H^1(\R^3\setminus\overline{\Omega^+}),\   \psi\geq0, \  \psi=\phi \text{ on }\partial\Omega^+ \biggr\}\,,
\end{equation*}
see Proposition~\ref{prp-obstacle}. 

The well-established regularity theory for the so-called \textit{free boundary} of a solution to an obstacle problem also yields more information about the regularity of the boundary of $\Omega_0$ (for a comprehensive account of the available results, see, for instance, the book \cite{PetShaUra}).

\begin{theorem}[Regularity]\label{thm-regularity}
  Under the assumptions of Theorem~\ref{thm-screening}, let $\Omega^-$ be the minimizer of problem \eqref{min-unc}, let $\vphi$ be the associated potential, and let $\Omega_0$ be defined by \eqref{def-Ome0}. Then $\phi\in C^{1,1}_{\mathrm{loc}}(\R^3\setminus\overline{\Omega^+})$ and the boundary of $\Omega_0$ has finite $\mathcal{H}^2$-measure locally in $\R^3\setminus\overline{\Omega^+}$. Moreover, one has the decomposition $\partial\Omega_0 = \Gamma \cup \Sigma$, where $\Gamma$ is relatively open in $\partial\Omega_0$ and real analytic, while $x_0\in\Sigma$ if and only if
  \begin{equation*}
    \lim_{r\to0^+}\frac {\mindiam \bigl( \{\phi=0\}\cap B_r(x_0) \bigr)}{r} =0\,,
  \end{equation*}
  where $\mindiam (E)$ denotes the infimum of the distances between pairs of parallel planes enclosing the set $E$. The Lebesgue density of $\Omega_0$ is 0 at each point of $\Sigma$.
\end{theorem}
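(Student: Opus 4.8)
The plan is to derive all four assertions from the classical regularity theory for the obstacle problem, after observing --- as recorded in \eqref{eq-obstacle1} and Proposition~\ref{prp-obstacle} --- that on the open set $U:=\R^3\setminus\overline{\Omega^+}$ the potential $\phi$ is a nonnegative solution of $\Delta\phi=\chi_{\{\phi>0\}}$; that is, $\phi$ solves in $U$ the obstacle problem with zero obstacle and right-hand side $f\equiv 1$, which is in particular bounded away from $0$ and real analytic. Writing $\Lambda:=\{\phi=0\}\cap U$ for the coincidence set and $F:=\partial\{\phi>0\}\cap U$ for the free boundary, one has $\Omega_0\cap U=\operatorname{int}\Lambda$ by \eqref{def-Ome0}, hence $\partial\Omega_0\cap U\subseteq F$; the theorem then follows by establishing the stated properties for $F$ and transferring them to $\partial\Omega_0$. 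Since all the statements are local in $U$, the merely Lipschitz regularity of $\partial\Omega^+$ is irrelevant, and $\phi$ is compactly supported by Theorem~\ref{thm-support}.

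\emph{Optimal regularity and size of the free boundary.} Since $f$ is bounded, the classical $C^{1,1}$ regularity for the obstacle problem (Frehse, Caffarelli; see \cite{PetShaUra}) gives $\phi\in C^{1,1}_{\mathrm{loc}}(U)$; and since $f\geq c>0$, the solution is non-degenerate, $\sup_{B_r(x_0)}\phi\geq c'r^2$ for every $x_0\in F$. Combining $C^{1,1}$-regularity with non-degeneracy, the standard monotonicity and blow-up arguments show that $F$ has locally finite $\mathcal{H}^2$-measure in $U$; since $\partial\Omega_0\cap U\subseteq F$, the same bound holds for $\partial\Omega_0$.

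\emph{Structure of the free boundary.} By Caffarelli's dichotomy, at every $x_0\in F$ each blow-up limit of the rescalings $\phi_{x_0,r}(x):=r^{-2}\phi(x_0+rx)$ is either a half-space solution $\tfrac12((x-x_0)\cdot e)_+^2$ for some unit vector $e$ --- a \emph{regular point} --- or a nonnegative quadratic polynomial $\tfrac12\langle Ax,x\rangle$ with $A\geq0$, $\operatorname{tr}A=1$ --- a \emph{singular point}; the regular points form a relatively open subset of $F$ near which $F$ is a $C^{1,\alpha}$ hypersurface, and Caffarelli's criterion identifies the singular set as $\{x_0\in F:\lim_{r\to0^+}\mindiam(\Lambda\cap B_r(x_0))/r=0\}$ (see \cite{Caf98,PetShaUra}). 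A hodograph--Legendre transformation together with Schauder estimates promotes the $C^{1,\alpha}$ hypersurface to $C^\infty$, and analytic elliptic regularity --- available precisely because $f$ is constant --- upgrades it to real analyticity. Declaring $\Gamma$ to be the set of regular points lying on $\partial\Omega_0$ and $\Sigma:=(\partial\Omega_0\cap U)\setminus\Gamma$, we obtain the asserted decomposition, with $\Gamma$ relatively open in $\partial\Omega_0$ and real analytic and $\Sigma$ described by the vanishing of the minimal-diameter ratio. Lastly, for $x_0\in\Sigma$ the associated blow-up polynomial $p$ vanishes on a proper subspace of $\R^3$, which is Lebesgue-null; the $L^1_{\mathrm{loc}}$-convergence $\chi_{\{\phi_{x_0,r}=0\}}\to\chi_{\{p=0\}}$ (a consequence of the $C^{1,1}$-convergence of blow-ups and of non-degeneracy) yields $|\Lambda\cap B_r(x_0)|/r^3\to0$, hence \emph{a fortiori} $|\Omega_0\cap B_r(x_0)|/r^3\to0$ since $\Omega_0\subseteq\Lambda$.

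The main obstacle I anticipate is not in the analysis but in the bookkeeping of this reduction: one must check that the obstacle-problem results, usually phrased on a ball with the coincidence set read off from the equation, localize to $U$, and --- crucially --- that \emph{near a regular point} $\partial\Omega_0$ really coincides with $F$. The latter holds because there $F$ locally separates $\{\phi>0\}$ from a set with nonempty interior, so that $\partial(\operatorname{int}\Lambda)$ and $\partial\{\phi>0\}$ agree near $x_0$; this is what makes $\Gamma$ relatively open in $\partial\Omega_0$ and the decomposition $\partial\Omega_0=\Gamma\cup\Sigma$ exhaustive. With this care taken, the rest is a direct appeal to \cite{Caf98,PetShaUra}.
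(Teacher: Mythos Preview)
Your proposal is correct and follows essentially the same route as the paper: both reduce to the obstacle problem via \eqref{eq-obstacle1} and Proposition~\ref{prp-obstacle}, invoke the standard $C^{1,1}$/non-degeneracy/free-boundary package from \cite{Caf77,Caf98,PetShaUra} for the free boundary $\Gamma(\phi)=\partial\{\phi>0\}$, and then transfer the decomposition to $\partial\Omega_0$ by the observation $\partial\Omega_0\subset\Gamma(\phi)$ together with the fact that regular free-boundary points must lie on $\partial\Omega_0$. The paper's proof is terser---it simply cites the relevant statements in \cite{PetShaUra} rather than sketching blow-ups or the hodograph argument---and it reads off the zero Lebesgue density at singular points directly from the $\mindiam$ characterization rather than from $L^1$-convergence of characteristic functions under blow-up, but this is a difference in exposition, not in strategy.
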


The proof of Theorem~\ref{thm-regularity} is given in Section~\ref{sec-obstacle}, and a more precise characterization of the singular points of $\p \Ome_0$ is given in Proposition~\ref{prp-singbound}.
Notice that the only possible singularities allowed in a minimizer are of ``cusp-type'', since the set $\Omega_0$ has zero Lebesgue density at such points. An example of occurrence of a singular point is presented in Remark~\ref{rem-singularity}.

\medskip

In the final section we consider for given $u^+ \in L^1(\R^3;\{0,1\})$ and for $\e>0$ the energy
\begin{align*}
  \mathcal F_\e (u) \ :=\
  \begin{cases}
    \| u^+ - u\|_{H^{-1}(\R^3)}^2 &\text{if } u\in L^1(\R^3;\{0,\frac{1}{\e}\}), \ \int_{\Omega^+} u=0, \ \int_{\R^3} u\leq\lambda,\\
    \infty &\text{ else.}
  \end{cases}
\end{align*}
Here our main result is the Gamma-convergence of $\mathcal F_\e$ to an energy defined on a class of positive Radon
measures, see Theorem~\ref{thm-gammaconv}. Furthermore, in Proposition~\ref{prp-surfacecharge} we show that minimizers
of the limit energy are supported on the boundary $\partial \Omega^+$ and thus describe a surface charge distribution.

\begin{remark}
Although we have restricted our analysis to the physically meaningful case of three dimensions with a Newtonian potential, we believe that the methods used in this paper can be extended in a straightforward way to obtain the corresponding results in higher space dimensions. Indeed, our analysis is based on general tools rather than on the specific three-dimensional structure of the problem. Similarly, it should be possible to treat also more general Riesz kernels $\frac{1}{|x-y|^\alpha}$ in the energy.
\end{remark}


\section{Existence and the relaxed problem} \label{sec-relaxed} %

In this section, we give the proofs of Theorems~\ref{thm-unconstrained} and \ref{thm-constrained}.  In order to overcome
the difficulties in the proof of the existence of a minimizer pointed out in the discussion above, it is convenient to
relax the problem by allowing for intermediate densities of charge taking values in $[0,1]$, the convex hull of
$\{0,1\}$.

\medskip

In this section, we will always assume that $\Ome^+$ is an open and bounded set with $|\Omega^+|=m$ for some $m>0$. We also recall that $u^+:=\chi_{\Omega^+}$ is the characteristic function of $\Ome^+$.  We then consider, for $\lambda>0$, the relaxed minimum problem
\begin{align} \label{min-rel} %
  \min \ \biggl\{ \mathcal{E}(u)\ : \ u \in L^1(\R^3;[0,1]), \int_{\Omega^+} u\dds x = 0,\ \int_{\R^3}u\dds x \leq\lambda
  \biggr\}\,,
\end{align}
where
\begin{align*}
  \mathcal{E}(u):=\int_{\R^3}|\nabla\phi|^2\dds x
\end{align*}
and $\phi$ is the potential associated to $u$, defined by \eqref{def-phi}. The corresponding class of admissible configurations is given by
\begin{align*}
  \mathcal{A}_\lambda := \Bigl\{ u \in L^1(\R^3; [0,1]) \ : \ \int_{\Omega^+} u\dds x = 0,\
  \int_{\R^3}u\dds x\leq\lambda \Bigr\}\,.
\end{align*}
We first note that the potential $\phi$ is uniformly bounded and indeed vanishes for $|x| \to \infty$.
This is \textit{a priori} not clear, since $u$ may have unbounded support.

\begin{lemma} \label{lem-decay} %
Assume $u\in\mathcal A_\lambda$. Then the potential $\phi$, defined in \eqref{def-phi}, satisfies 
\begin{align}
	-\frac{1}{2}\Big(\frac{3\lambda}{4\pi}\Big)^{\frac{2}{3}}\,&\leq\,\phi(x)\,\leq\,\frac{1}{2}\Big(\frac{3m}{4\pi}\Big)^{\frac{2}{3}}\quad\text{ for all }x\in\R^3, \label{eq:bounds-phi}\\
	|\phi(x)| \,&\to\, 0\quad\text{ for }|x| \to \infty. \label{eq:decay-phi}
\end{align}
\end{lemma}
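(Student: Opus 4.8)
The plan is to prove the two estimates in \eqref{eq:bounds-phi} first, and then deduce the decay \eqref{eq:decay-phi} from the upper bound on the charge density together with the finite mass $\int_{\R^3}u\leq\lambda$. For the upper bound on $\phi$, I would drop the negative contribution: since $u\geq0$, we have $\phi(x)\leq\int_{\R^3}\frac{u^+(y)}{4\pi|x-y|}\dds y$. Now $u^+=\chi_{\Omega^+}$ with $|\Omega^+|=m$, so the Newtonian potential of $u^+$ at any point $x$ is maximized, among all sets of volume $m$, by the centered ball $B_R$ with $|B_R|=m$ (bathtub/rearrangement principle, or a direct layer-cake comparison: replacing $\Omega^+$ by the ball of equal volume centered at $x$ only increases $\int\frac{\chi_E(y)}{|x-y|}\dds y$ because the kernel $\frac{1}{|x-y|}$ is a decreasing function of $|x-y|$). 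An explicit computation of the Newtonian potential of a uniformly charged ball of radius $R$, evaluated at its center, gives $\int_{B_R}\frac{1}{4\pi|y|}\dds y = \frac{R^2}{2}$, and with $R=(3m/4\pi)^{1/3}$ this yields exactly the claimed bound $\phi(x)\leq\frac12(3m/4\pi)^{2/3}$. The lower bound is entirely symmetric: drop the positive term, $\phi(x)\geq-\int_{\R^3}\frac{u(y)}{4\pi|x-y|}\dds y$, use $0\leq u\leq1$ and $\int u\leq\lambda$, and bound the potential of $u$ by that of the centered ball of volume $\lambda$ (again by rearrangement, noting the constraint $u\leq1$ is exactly what makes the ball admissible), giving $\phi(x)\geq-\frac12(3\lambda/4\pi)^{2/3}$.

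For the decay \eqref{eq:decay-phi}, I would split the integral defining $\phi(x)$ over $\{|x-y|\leq |x|/2\}$ and its complement. On the far region $\{|x-y|>|x|/2\}$, bound the kernel by $\frac{1}{2\pi|x|}$ and use that the total charge $\int_{\R^3}(u^++u)\leq m+\lambda$ is finite, giving a contribution $O(1/|x|)$. On the near region $\{|x-y|\leq|x|/2\}$ (which is contained in $\{|y|\geq|x|/2\}$), I would use the same rearrangement estimate as above but now only over the mass that lies at distance $\geq|x|/2$ from the origin: since $u^++u\in L^1$ with the density bound $u^++u\leq 2$ (or handle $u^+$ and $u$ separately, each bounded by $1$ with finite mass), the mass $\mu(|x|):=\int_{\{|y|\geq|x|/2\}}(u^+(y)+u(y))\dds y\to0$ as $|x|\to\infty$, and the Newtonian potential at $x$ of a density bounded by $2$ with total mass $\mu$ is at most that of the ball of volume $\mu/2$ centered at $x$, i.e.\ $\leq \frac12(3\mu/8\pi)^{2/3}\to0$. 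Combining the two pieces gives $|\phi(x)|\to0$.

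The main obstacle — really the only nonroutine point — is the rearrangement comparison: the assertion that among all measurable sets $E$ of fixed volume, $\sup_E \int_E \frac{1}{4\pi|x_0-y|}\dds y$ is attained by the ball centered at $x_0$, and more generally that for $0\le w\le 1$ with $\int w$ fixed, $\int \frac{w(y)}{4\pi|x_0-y|}\dds y$ is maximized by $\chi_{B}$ with $B$ the ball of the given volume centered at $x_0$. This is a standard consequence of the bathtub principle (Lieb--Loss \cite{LiLo01}), since $y\mapsto \frac{1}{4\pi|x_0-y|}$ is radially decreasing about $x_0$: one orders the ``gain'' per unit mass and fills the superlevel sets of the kernel first, which are precisely balls about $x_0$; the constraint $w\le1$ is exactly what makes $\chi_B$ feasible. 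After this, the evaluation $\int_{B_R}\frac{\dds y}{4\pi|y|}=\frac{R^2}{2}$ is an elementary polar-coordinates computation, and everything else is bookkeeping with the finite mass and the density bounds.
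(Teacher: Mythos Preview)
Your proposal is correct and follows essentially the same approach as the paper. The bounds \eqref{eq:bounds-phi} are obtained in both cases by dropping one sign and invoking the bathtub/rearrangement principle (the paper cites \cite[Theorem~3.4]{LiLo01}) to compare with the centered ball, then computing $\int_{B_R}\frac{1}{4\pi|y|}\dds y=\frac{R^2}{2}$. For the decay \eqref{eq:decay-phi} the paper uses the same near/far splitting idea, only with a fixed cutoff radius $R_\e$ (chosen so that $\int_{\R^3\setminus B_{R_\e}}u<\e$) rather than your moving cutoff $|x|/2$; both variants bound the far piece by total mass over distance and the near piece by rearrangement on a vanishing tail mass. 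One small slip: with density bound $2$ and total mass $\mu$, the optimal configuration $2\chi_B$ with $|B|=\mu/2$ gives potential $R^2=(3\mu/8\pi)^{2/3}$ at the center, not $\tfrac12(3\mu/8\pi)^{2/3}$; this does not affect the conclusion.
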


\begin{proof}
  For $t>0$ let $r(t)$ denote the radius of a ball with volume $t$, thus $\frac{4\pi}{3}r(t)^3=t$. By classical rearrangement inequalities \cite[Theorem~3.4]{LiLo01} we deduce
  \begin{align*}
  	\varphi(x) \,\leq\, \int_{\R^3} \frac{u^+(y)}{4\pi|x-y|}\dd y \,\leq\, \int_{B_{r(m)}}\frac{1}{4\pi|y|}\dd y \,=\, \frac{r(m)^2}{2}\,.
  \end{align*}
  This shows the upper estimate in \eqref{eq:bounds-phi}. The lower bound follows similarly.
  
  Next let $\e>0$ be given and fix $R_\e>\frac{1}{\e}$ such that $\int_{\R^3\setminus B_{R_\e}}u <\e$ and $\Omega^+\subset
  B_{R_\e}$. Again by rearrangement inequalities we can bound
  \begin{align*}  %
    \int_{\R^3\setminus B_{R_\e}}\frac{u(y)}{4\pi|x-y|}\dds y \leq
    \max_x\max_{\substack{0\leq w \leq 1\\\int w \leq \e}}\int_{\R^3}\frac{w(y)}{4\pi|x-y|}\dds y \leq
    \int_{B_{r(\e)}}\frac{1}{4\pi|y|}\dds y = \frac{r(\e)^2}{2}\,.
  \end{align*}
  Then for every $x$ with $|x|>2 R_\e$ one has
  \begin{align*}
    |\phi(x)| \leq \int_{B_{R_\e}}\frac{|u^+(y)-u(y)|}{4\pi|x-y|}\dds y + \int_{\R^3\setminus
      B_{R_\e}}\frac{u(y)}{4\pi|x-y|}\dds y \leq \frac{m+\lambda}{4\pi R_\e} + \frac{r(\e)^2}{2}\,,
  \end{align*}
  which shows \eqref{eq:decay-phi}.
\end{proof}

Existence and uniqueness of a minimizer for the relaxed problem \eqref{min-rel} follow directly from standard arguments.

\begin{proposition}[Minimizer for the relaxed problem] \label{prp-existence} %
  For every $\lambda > 0$, the relaxed minimum problem \eqref{min-rel} admits a unique solution $u_\lambda \in \mathcal{A}_\lam$.
\end{proposition}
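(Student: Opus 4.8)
The plan is to apply the direct method of the Calculus of Variations. First I would note that the infimum is finite: $u\equiv0$ belongs to $\AA_\lambda$, and $\EE(0)=\int_{\R^3}|\nabla\phi^+|^2\dd x<\infty$, where $\phi^+$ is the Newtonian potential of the bounded set $\Omega^+$. Let $(u_n)\subset\AA_\lambda$ be a minimizing sequence. The constraints $0\le u_n\le1$ and $\int_{\R^3}u_n\le\lambda$ bound $(u_n)$ in $L^1(\R^3)\cap L^\infty(\R^3)$, so after passing to a subsequence $u_n\wstar u$ weakly-$*$ in $L^\infty(\R^3)$. Testing against characteristic functions of sets of finite measure gives $0\le u\le1$ a.e.; testing against $\chi_{\Omega^+}\in L^1(\R^3)$ gives $\int_{\Omega^+}u=0$; and $\int_{B_R}u=\lim_n\int_{B_R}u_n\le\lambda$ for every $R>0$ forces, since $u\ge0$, both $\int_{\R^3}u\le\lambda$ and $u\in L^1(\R^3)$. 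Hence $u\in\AA_\lambda$. Observe that mass may escape to infinity along $(u_n)$, but since the volume bound is an inequality this does not affect admissibility of the limit.

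The core is the weak-$*$ lower semicontinuity $\EE(u)\le\liminf_n\EE(u_n)$. Write $\phi_n$, $\phi$ for the potentials of $u_n$, $u$. From $\|\nabla\phi_n\|_{L^2(\R^3)}^2=\EE(u_n)$ bounded and the Sobolev inequality in $\R^3$, $(\phi_n)$ is bounded in $L^6(\R^3)$ with $\nabla\phi_n$ bounded in $L^2(\R^3)$; passing to a further subsequence, $\nabla\phi_n\wto\nabla\phi_*$ in $L^2$ and $\phi_n\wto\phi_*$ in $L^6$ for some $\phi_*$. Testing the identity $-\Delta\phi_n=u^+-u_n$ against $\psi\in C_c^\infty(\R^3)$ and using $u_n\wstar u$ (with $\psi\in L^1(\R^3)$) together with $\nabla\phi_n\wto\nabla\phi_*$ gives $-\Delta\phi_*=u^+-u$ in $\DD'(\R^3)$. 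Since also $-\Delta\phi=u^+-u$, the difference $\phi-\phi_*$ is harmonic on $\R^3$ and lies in $L^6(\R^3)$, hence vanishes identically by the mean value property; thus $\phi_*=\phi$. By weak lower semicontinuity of the $L^2$ norm, $\EE(u)=\|\nabla\phi\|_{L^2}^2\le\liminf_n\|\nabla\phi_n\|_{L^2}^2=\liminf_n\EE(u_n)$, so $u$ is a minimizer. I expect the identification $\phi_*=\phi$ to be the main subtle point: because the Newtonian kernel $1/(4\pi|x-y|)$ is not integrable, the potentials do not converge pointwise for free, so the argument must be routed through the distributional Poisson equation and a Liouville-type uniqueness statement for decaying solutions.

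Uniqueness follows from strict convexity. The affine map $u\mapsto u^+-u$ is injective on the convex set $\AA_\lambda$, and $w\mapsto\|w\|_{H^{-1}(\R^3)}^2$ is strictly convex by the parallelogram identity $\tfrac12\|w_1\|^2+\tfrac12\|w_2\|^2-\bigl\|\tfrac{w_1+w_2}{2}\bigr\|^2=\tfrac14\|w_1-w_2\|^2$. Hence, if $u_1\ne u_2$ were two minimizers, then $\tfrac12(u_1+u_2)\in\AA_\lambda$ and $\EE\bigl(\tfrac12(u_1+u_2)\bigr)<\tfrac12\EE(u_1)+\tfrac12\EE(u_2)=\min_{\AA_\lambda}\EE$, a contradiction. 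This gives the unique minimizer $u_\lambda\in\AA_\lambda$.
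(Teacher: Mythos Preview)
Your argument is correct and follows the same overall architecture as the paper (direct method for existence, strict convexity for uniqueness), but the lower semicontinuity step is handled by a genuinely different route. The paper expands
\[
\mathcal{E}(u)=\int_{\R^3}\int_{\R^3}\frac{(u^+-u)(x)(u^+-u)(y)}{4\pi|x-y|}\dd x\dd y,
\]
treats the self-interaction of $u$ by quoting the classical potential-theoretic lower semicontinuity of the Coulomb energy (Landkof), and passes to the limit in the mixed term using that $\phi^+\in C^0_0(\R^3)$. You instead stay at the level of the Dirichlet form: from the energy bound you get weak compactness of $(\phi_n)$ in $\dot H^1$, identify the weak limit with $\phi$ by passing to the limit in the distributional Poisson equation and invoking a Liouville-type statement for harmonic functions in $L^6(\R^3)$, and then apply weak lower semicontinuity of the $L^2$ norm to $\nabla\phi_n$. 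Your approach is more self-contained---it avoids the external potential-theory reference---at the price of the small extra step of proving $\phi_*=\phi$; the paper's approach is shorter once one is willing to cite Landkof. The uniqueness arguments are essentially the same: the paper writes out the strict convexity of $\int|\nabla\phi_\alpha|^2$, while your parallelogram identity for the $H^{-1}$ norm is an equivalent repackaging.
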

\begin{proof}
  The existence of a minimizer follows by the Direct Method of the Calculus of Variations and standard semicontinuity
  arguments. Indeed, for a minimizing sequence $u_n\in\mathcal{A}_\lambda$ we have that, up to subsequences, $u_n\wto u$
  weakly* in $L^\infty(\R^3)$ for some measurable function $u\in L^\infty(\R^3)$, which is clearly still an element of
  the class $\mathcal{A}_\lambda$.  To prove semicontinuity, we express the total energy as
  \begin{align*}
    \mathcal{E}(u) = \int_{\R^3}\int_{\R^3} \frac{(u^+-u)(x)(u^+-u)(y)}{4\pi|x-y|} \,\dd x\dd y\,.
  \end{align*}
  For the self-interaction energy of $u$ we have
  \begin{align*}
    \int_{\R^3}\int_{\R^3}\frac{u(x)u(y)}{4\pi|x-y|}\dds x\dd y \leq
    \liminf_{n\to\infty}\int_{\R^3}\int_{\R^3}\frac{u_n(x)u_n(y)}{4\pi|x-y|}\dds x\dd y
  \end{align*}
  by classical potential theory (see, for instance, \cite[equation~(1.4.5)]{Lan}).  For the mixed term, we have
  \begin{align*}
    \int_{\R^3}\int_{\R^3}\frac{u^+(x)u_n(y)}{4\pi|x-y|}\dds x\dd y &= \int_{\R^3}\phi^+(y)u_n(y)\dds y \\
    &\to
    \int_{\R^3}\phi^+(y)u(y)\dds y 
    = \int_{\R^3}\int_{\R^3}\frac{u^+(x)u(y)}{4\pi|x-y|}\dds x\dd y
  \end{align*}
  where in passing to the limit we used the fact that the potential $\phi^+$ associated to the positive phase $\Omega^+$
  is a continuous function vanishing at infinity. This completes the proof of existence.
  
  \medskip

  Uniqueness of the minimizer follows by convexity of the problem: let $u_1,u_2\in\mathcal{A}_\lambda$ be two solutions to
  the the relaxed minimum problem \eqref{min-rel}, and let $\phi_1,\phi_2$ be the associated potentials. Setting
  $u_\alpha:=\alpha u_1 + (1-\alpha)u_2$ for $\alpha\in(0,1)$, we have $u_\alpha\in\mathcal{A}_\lambda$ and the associated
  potential is given by $\phi_\alpha = \alpha\phi_1 + (1-\alpha)\phi_2$. Hence
  \begin{align*}
    \mathcal{E}(u_\alpha) = \int_{\R^3}|\nabla\phi_\alpha|^2\dds x < \alpha\int_{\R^3}|\nabla\phi_1|^2\dds x +
    (1-\alpha)\int_{\R^3}|\nabla\phi_2|^2\dds x = \min_{\mathcal{A}_\lambda}\mathcal{E}(u)
  \end{align*}
  unless $\nabla\phi_1=\nabla\phi_2$. Hence $u_1=u_2$ almost everywhere, and the minimizer is unique.
\end{proof}

We now turn our attention to some useful properties of a minimizer of the relaxed problem \eqref{min-rel}, following from first variation arguments.

\begin{lemma}[First variation of the relaxed problem] \label{lem-EL} %
  Assume that $u$ is the minimizer of the relaxed problem \eqref{min-rel} and let $\phi$ be the associated potential.
  Let $\eta$ be any bounded Lebesgue integrable function such that $\int_{\Omega^+} |\eta|=0$.  Then the following
  properties hold:
  \begin{enumerate}
  \item\label{it:lem3.3-1} If $\int_{\R^3}\eta=0$ and there exists $\delta>0$ such that $\supp\eta\subset\{\delta<u<1-\delta\}$, then
    \begin{align*}
      \int_{\R^3}\phi\eta\dds x =0\,.
    \end{align*}
  \item\label{it:lem3.3-2} If $\int_{\R^3}\eta\leq0$ and there exists $\delta>0$ such that $\eta\geq0$ on $\{u<\delta\}$ and $\eta\leq0$ on
    $\{u>1-\delta\}$, then
    \begin{align*}
      \int_{\R^3}\phi\eta\dds x \leq 0\,.
    \end{align*}
  \item\label{it:lem3.3-3} If $\int_{\R^3} u < \lambda$, $\eta\geq 0$, and if there exists $\delta>0$ with
    $\supp\eta\subset \{u<1-\delta\}$ then
    \begin{align*}
      \int_{\R^3}\phi\eta\dds x \leq 0\,.
    \end{align*}
  \end{enumerate}
\end{lemma}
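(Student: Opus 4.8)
The plan is a direct first-variation (perturbation) argument, exploiting that $\mathcal E$ is a quadratic functional of $u^+-u$. For $t>0$ small one tests the minimality of $u$ against the competitor $u+t\eta$. Writing $\mathcal E(v)=\|u^+-v\|_{H^{-1}(\R^3)}^2$ and denoting by $B(f,g):=\int_{\R^3}\int_{\R^3}\frac{f(x)g(y)}{4\pi|x-y|}\dds x\dd y$ the associated bilinear form, one has the exact identity
\[
  \mathcal E(u+t\eta)=\mathcal E(u)-2t\int_{\R^3}\phi\,\eta\dds x+t^2\,B(\eta,\eta),
\]
since $B(u^+-u,\eta)=\int_{\R^3}\phi\,\eta\dds x$ by Fubini's theorem and the definition \eqref{def-phi} of $\phi$. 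Once we know that $u+t\eta\in\mathcal A_\lambda$ for all sufficiently small $t>0$, minimality gives $-2t\int_{\R^3}\phi\eta\,\dd x+t^2 B(\eta,\eta)\geq 0$, hence $\int_{\R^3}\phi\eta\,\dd x\leq\frac{t}{2}B(\eta,\eta)$, and letting $t\to0^+$ yields $\int_{\R^3}\phi\eta\,\dd x\leq0$. This is the desired conclusion in cases \ref{it:lem3.3-2} and \ref{it:lem3.3-3}; in case \ref{it:lem3.3-1}, since the hypotheses on $\eta$ are invariant under $\eta\mapsto-\eta$, the same argument applied to $-\eta$ gives $\int_{\R^3}\phi\eta\,\dd x\geq0$ as well, hence equality. (We may assume $\eta\not\equiv0$, otherwise the statement is trivial.)

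Thus the whole proof reduces to checking that $u+t\eta\in\mathcal A_\lambda$ for small $t>0$ in each of the three cases. The constraint $\int_{\Omega^+}(u+t\eta)=0$ is automatic because $\eta=0$ a.e.\ on $\Omega^+$. For the pointwise bounds $0\leq u+t\eta\leq 1$: in case \ref{it:lem3.3-3} one uses $\eta\geq0$ for the lower bound, and $u<1-\delta$ on $\supp\eta$ together with $t\|\eta\|_{L^\infty}$ small for the upper bound; in case \ref{it:lem3.3-2} one splits $\R^3$ into $\{u<\delta\}$, $\{u>1-\delta\}$ and the complementary set, using $\eta\geq0$ on the first, $\eta\leq0$ on the second, and the crude bound $|t\eta|\leq t\|\eta\|_{L^\infty}$ on the third, choosing $t$ small relative to $\delta$ (and, say, $\delta\leq\frac12$); in case \ref{it:lem3.3-1} the inclusion $\supp\eta\subset\{\delta<u<1-\delta\}$ makes the same crude bound suffice. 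For the mass constraint $\int_{\R^3}(u+t\eta)\leq\lambda$: in case \ref{it:lem3.3-1} it is preserved exactly since $\int_{\R^3}\eta=0$; in case \ref{it:lem3.3-2} it holds since $\int_{\R^3}\eta\leq0$ and $t>0$; in case \ref{it:lem3.3-3} it holds for $t$ small because $\int_{\R^3}u<\lambda$ strictly, leaving a positive margin to absorb $t\int_{\R^3}\eta$.

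The step I expect to require the most care is justifying that $B(\eta,\eta)<\infty$, so that the $t^2$-term is genuinely lower order (its precise value plays no role in the conclusion). This holds because $\eta\in L^1(\R^3)\cap L^\infty(\R^3)\subset L^{6/5}(\R^3)$, so by the Hardy--Littlewood--Sobolev inequality (as already used in Section~\ref{sec-setting}; cf.\ \cite[Theorem~4.3]{LiLo01}) the Newtonian potential of $\eta$ has finite Dirichlet energy, i.e.\ $B(\eta,\eta)=\|\eta\|_{H^{-1}(\R^3)}^2<\infty$; the same integrability makes Fubini's theorem applicable in the identity $B(u^+-u,\eta)=\int_{\R^3}\phi\eta\,\dd x$. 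Beyond this I do not anticipate any obstacle: once admissibility of $u+t\eta$ is in place, the statement follows from the one-line perturbation estimate above.
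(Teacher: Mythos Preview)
Your proof is correct and follows essentially the same first-variation argument as the paper: both test minimality against $u+t\eta$ and expand the energy to second order, the paper via the Dirichlet representation $\mathcal E=\int|\nabla\phi|^2$ (introducing $\psi$ with $\Delta\psi=\eta$), you via the equivalent bilinear form $B$. Your presentation is slightly more explicit about admissibility of the competitor and about the finiteness of the second-order term, but the underlying idea is identical.
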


\begin{proof}
  We first prove \ref{it:lem3.3-1}. The function $u_\e:=u\pm\e\eta$, for $\e>0$ sufficiently small, is admissible in the relaxed
  problem \eqref{min-rel}. Let $\psi$ be such that $\Delta\psi=\eta$, so that $\phi_\e=\phi\pm\e\psi$ satisfies
  $-\Delta\phi_\e=u^+ - u_\e.$ Then by minimality of $u$ we have
  \begin{align*}
    \int_{\R^3}|\nabla\phi|^2\dds x \leq \int_{\R^3}|\nabla\phi_\e|^2\dds x \,,
  \end{align*}
  from which, by letting $\e\to0$, we immediately deduce 
  \begin{align*}
    0 = \int_{\R^3}\nabla\phi\cdot\nabla\psi\dds x = -\int_{\R^3} \phi\Delta\psi\dds x = -\int_{\R^3}\phi\eta\dds x\,.
  \end{align*}
  Let now $u,\eta$ satisfy the assumptions in \ref{it:lem3.3-2} or \ref{it:lem3.3-3}.  Then the function $u_\e:=u+\e\eta$, for $\e>0$
  sufficiently small, is admissible in the relaxed problem \eqref{min-rel}, and arguing as before we obtain
  \begin{align*}
    0 \leq \int_{\R^3}\nabla\phi\cdot\nabla\psi\dds x = -\int_{\R^3} \phi\Delta\psi\dds x = -\int_{\R^3}\phi\eta\dds
    x\,,
  \end{align*}
  which completes the proof.
\end{proof}

As a consequence of the first order conditions proved in previous lemma, it follows that the potential associated to a minimizer is everywhere nonnegative.

\begin{lemma}[Nonnegativity of $\vphi$] \label{lem-phipos} %
  Assume that $u$ is the minimizer of the relaxed problem \eqref{min-rel} and let $\phi$ be the associated
  potential. Then $\phi \geq 0$ in $\R^3$.
\end{lemma}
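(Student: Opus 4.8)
The plan is to argue by contradiction: suppose the potential $\phi$ of the minimizer $u$ is somewhere negative, and derive a contradiction with the first-variation inequalities from Lemma~\ref{lem-EL}. The heuristic is clear — if $\phi<0$ in some region, the attractive interaction there is actually being ``wasted'' (negative charge sitting where the net potential opposes it), so we should be able to lower the energy by removing a bit of negative charge from the set $\{\phi<0\}$, or by redistributing it. Since $\int_{\Omega^+}\phi>0$ is not immediately obvious but $\phi$ is continuous and vanishes at infinity (Lemma~\ref{lem-decay}), the set $\{\phi<0\}$ is open; the key structural fact I want to exploit is that on $\{\phi<0\}$ we cannot have $u\equiv 1$ everywhere (indeed I expect $u$ to be small or zero there), which would make the variation $\eta$ feasible.

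The concrete steps I would carry out: First, introduce $m:=\inf_{\R^3}\phi$ and suppose for contradiction $m<0$; by Lemma~\ref{lem-decay} the infimum is attained (or approached only on a bounded set) and $\{\phi<0\}$ is a bounded open set disjoint from a neighbourhood of infinity. Second, observe that on the set $N:=\{\phi<0\}$ we must have $u>0$ on a set of positive measure — for otherwise $u=0$ a.e. on $N$, but then $-\Delta\phi=u^+-u=-u\le 0$ on $N$ (note $N\cap\Omega^+=\emptyset$ since $\phi<0<\phi$ on $\Omega^+$ once we know... actually I'd handle $\Omega^+$ separately), so $\phi$ would be superharmonic... — wait, I need $\phi$ subharmonic where $u=0$. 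On $\{u=0\}$ we have $-\Delta\phi=u^+\ge0$, so $\phi$ is superharmonic there; that is the wrong sign. Let me instead reason directly via the first variation. Third, the real argument: pick a point $x_0$ with $\phi(x_0)<0$; choose a small ball $B$ around $x_0$ on which $\phi<0$; since $\int_{\Omega^+}\phi>0$ would follow from the fact that decreasing $u$ cannot help there, consider the competitor obtained by transporting a small amount of mass. Apply Lemma~\ref{lem-EL}\ref{it:lem3.3-3} if $\int u<\lambda$: take $\eta=\chi_{B\cap\{u<1-\delta\}}\ge0$ supported where $\phi<0$, giving $\int\phi\eta<0$, contradiction — provided $\{u<1-\delta\}\cap B$ has positive measure. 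If instead $\int u=\lambda$, use \ref{it:lem3.3-2}: remove mass from $B\subset\{\phi<0\}$ and add it on $\{\phi\ge 0\}$, i.e. take $\eta = \eta_+ - \eta_-$ with $\eta_-$ supported in $\{u>\delta\}\cap\{\phi<0\}$, $\eta_+$ supported in $\{u<1-\delta\}\cap\{\phi\ge0\}$, $\int\eta\le 0$, to get $\int\phi\eta\le 0$ while by construction $\int\phi\eta = \int\phi\eta_+ - \int\phi\eta_- \ge -\int\phi\eta_- > 0$, a contradiction.

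The main obstacle is the degenerate case where the first-variation lemmas cannot be applied because the relevant competitor sets are null: namely when $u=1$ a.e. on all of $\{\phi<0\}$ (so we cannot increase $u$ there, only decrease), combined with $\int u=\lambda$ and $u=1$ a.e. on $\{\phi\ge 0\}\setminus\Omega^+$ as well (so we cannot increase $u$ anywhere to compensate). I would rule this out by a direct energy/maximum-principle argument: if $u=1$ a.e. on the bounded open set $\{\phi<0\}=:N$, then on $N$ we have $-\Delta\phi = u^+-u = -1$, i.e. $\Delta\phi=1>0$, so $\phi$ is subharmonic on $N$ and hence attains no interior minimum; since $\phi=0$ on $\partial N$ (by continuity, as $N$ is the open set $\{\phi<0\}$), the maximum principle forces $\phi\ge 0$ on $N$, contradicting $\phi<0$ on $N$. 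Thus the degenerate case is impossible, and one of the two variational arguments above always applies, completing the proof. I expect the write-up mostly to consist of carefully splitting into the cases $\int u<\lambda$ and $\int u=\lambda$ and, in the latter, checking that there is room (positive measure in $\{u<1-\delta\}$ off $\overline{\Omega^+}$) to add the compensating mass — which again follows since $u\equiv 1$ everywhere off $\overline{\Omega^+}$ would make $\phi$ subharmonic on the unbounded set $\R^3\setminus\overline{\Omega^+\cup\{\text{where }u<1\}}$ and contradict decay at infinity together with $\phi\to$ values forced by Lemma~\ref{lem-decay}.
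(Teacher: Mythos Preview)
Your outline has the right ingredients but two of the three branches are broken, and the correct degenerate case is misidentified.

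First, your ``$\int u<\lambda$'' branch does not yield a contradiction. Lemma~\ref{lem-EL}\ref{it:lem3.3-3} gives $\int\phi\eta\le 0$ for admissible $\eta\ge 0$; your direct computation with $\eta=\chi_{B\cap\{u<1-\delta\}}$ supported in $\{\phi<0\}$ also gives $\int\phi\eta<0$. These inequalities point the same way.

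Second, the transport argument (your ``$\int u=\lambda$'' branch) is correct, but you have the degenerate case backwards. You need $\eta_-$ supported in $\{u>\delta\}\cap\{\phi<0\}$; this fails precisely when $u=0$ a.e.\ on $\{\phi<0\}$, not when $u=1$ there. In fact if $u=1$ a.e.\ on $\{\phi<0\}$ your transport argument applies immediately (with $\eta_+=0$). Moreover, your maximum-principle step is the wrong way round: $\Delta\phi=1$ makes $\phi$ \emph{subharmonic}, which forbids an interior \emph{maximum}; with $\phi=0$ on $\partial N$ you conclude only $\phi\le 0$ on $N$, no contradiction.

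The paper's argument is exactly the cleaned-up version of what remains. Use Lemma~\ref{lem-EL}\ref{it:lem3.3-2} with $\eta=-\chi_{\{u>\delta\}\cap B_r(x)}$ (your Case~2 with $\eta_+=0$; note \ref{it:lem3.3-2} does not require $\int u=\lambda$) to get $\int_{\{u>\delta\}\cap B_r(x)}\phi\ge 0$ and hence $\phi\ge 0$ a.e.\ on $\{u>0\}$. Then the open set $U:=\{\phi<0\}$ lies in $\{u=0\}$, so $-\Delta\phi=u^+\ge 0$ there: $\phi$ is \emph{superharmonic} on $U$, vanishes on $\partial U$ and at infinity, and the minimum principle forces $\phi\ge 0$ on $U$, whence $U=\emptyset$.
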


\begin{proof}
  For $\delta > 0$, let $x \in E_\delta := \{ u > \delta \}$ such that $E_\delta$ has positive Lebesgue density at $x$. By an application of Lemma~\ref{lem-EL}\ref{it:lem3.3-2} with $\eta:=-\chi_{E_\delta\cap B_r(x)}$, we then get for every $r>0$
  \begin{align*}
    \int_{E_\delta \cap B_r(x)}\phi(y)\dds y \geq 0\,.
  \end{align*}
  Since $\phi$ is continuous, it follows that $\phi(x)\geq 0$ for all $x \in E_\delta := \{ u > \delta \}$ such that
  $E_\delta$ has positive Lebesgue density at $x$. By \cite[Corollary~2.14]{Mat95}, we hence have $\phi \geq 0$ a.e. in
  $E_\delta$. Since $\delta > 0$ is arbitrary, it follows that $\phi \geq 0$ a.e. in $E_0 := \{ u > 0 \}$. By changing
  $u$ on a set of Lebesgue measure zero, we hence may assume that $\phi \geq 0$ in $E_0$.

  \medskip

  By the above calculation, the open set $U:=\{\phi<0\}$ is contained in $\{ u \leq 0 \}$, and hence $-\Delta\phi\geq0$
  in $U$.  Since $\phi$ vanishes at the boundary of $U$ and at infinity, by the minimum principle we conclude that
  $\phi$ must be nonnegative in $U$, which is a contradiction unless $U=\emptyset$. This shows that $\phi\geq0$ in
  $\R^3$.
\end{proof}

The following simple lemma is used in the proofs of Proposition~\ref{prp-saturation} and Theorem~\ref{thm-screening}.

\begin{lemma} \label{lem-potential} %
  Let $w\in L^1(\R^3)\cap L^\infty(\R^3)$ and let $\phi$ be the associated potential, that is
  \begin{align*}
    \phi(x) := \int_{\R^3} \frac{w(y)}{4\pi|x-y|}\dds y\,.
  \end{align*}
  Then
  \begin{align}\label{eq-pot1}
    \int_{\partial B_R}\phi\dds\mathcal{H}^2 = \int_R^{\infty} \frac{R^2}{r^2} \biggl( \int_{B_r}w\dds x \biggr)\dds r\,.
  \end{align}
  In particular, if $\supp w\subset B_R$ for some $R>0$ and $\int_{\R^3}w=\lambda$, then
  \begin{align} \label{eq-pot2} %
    \int_{\partial B_R}\phi \dds\mathcal{H}^2 = \lambda R\,.
  \end{align}
\end{lemma}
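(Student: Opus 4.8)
The plan is to reduce the whole computation to the classical formula for the spherical average of the Newtonian kernel. First I would record the identity
\[
  \int_{\partial B_R} \frac{1}{4\pi|x-y|}\dds\mathcal{H}^2(x) \,=\, \frac{R^2}{\max\{R,|y|\}} \qquad\text{for every } y\in\R^3,
\]
which is nothing but Newton's theorem (the averaged potential of a uniformly charged sphere). If one prefers a self-contained derivation, it follows by an elementary direct computation: placing the polar axis along $y$ and writing $|x-y|^2 = R^2+|y|^2-2R|y|\cos\theta$ in spherical coordinates on $\partial B_R$, the integral in the polar angle is elementary and yields the stated value after distinguishing the cases $|y|\le R$ and $|y|>R$.

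Next, since $w\in L^1(\R^3)$ and the spherical average above is bounded by $R$, Tonelli's theorem applies and gives
\[
  \int_{\partial B_R}\phi\dds\mathcal{H}^2 \,=\, \int_{\R^3} w(y)\Bigl(\int_{\partial B_R}\frac{1}{4\pi|x-y|}\dds\mathcal{H}^2(x)\Bigr)\dds y \,=\, \int_{\R^3}\frac{R^2}{\max\{R,|y|\}}\,w(y)\dds y.
\]
To turn the right-hand side into a radial integral I would use the elementary identity $\frac{R^2}{\max\{R,|y|\}} = R^2\int_R^\infty r^{-2}\chi_{\{|y|<r\}}\dds r$ (verified by treating the cases $|y|\le R$ and $|y|>R$ separately) and apply Fubini once more -- legitimate since $y\mapsto w(y)/\max\{R,|y|\}$ is integrable -- obtaining
\[
  \int_{\partial B_R}\phi\dds\mathcal{H}^2 \,=\, R^2\int_R^\infty \frac{1}{r^2}\Bigl(\int_{\{|y|<r\}} w(y)\dds y\Bigr)\dds r \,=\, \int_R^\infty \frac{R^2}{r^2}\Bigl(\int_{B_r}w\dds x\Bigr)\dds r,
\]
which is \eqref{eq-pot1}. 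For the ``in particular'' part \eqref{eq-pot2}: if $\supp w\subset B_R$ and $\int_{\R^3}w=\lambda$, then $\int_{B_r}w=\lambda$ for every $r\ge R$, so \eqref{eq-pot1} reduces to $\int_{\partial B_R}\phi\dds\mathcal{H}^2 = \lambda R^2\int_R^\infty r^{-2}\dds r = \lambda R$.

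I do not expect a genuine obstacle here: the only points needing justification are the spherical mean value of the Newtonian kernel (classical) and the two applications of Fubini--Tonelli, which are covered by $w\in L^1(\R^3)\cap L^\infty(\R^3)$ together with the boundedness of the averaged kernel. As an alternative route that avoids Fubini, one can argue via the radial ODE satisfied by the spherical mean $\bar\phi(R):=\frac{1}{4\pi R^2}\int_{\partial B_R}\phi\dds\mathcal{H}^2$: the divergence theorem applied to $-\Delta\phi=w$ gives $\bar\phi'(R) = -\frac{1}{4\pi R^2}\int_{B_R}w\dds x$, and integrating from $R$ to $\infty$ using $\bar\phi(R)\to0$ as $R\to\infty$ (valid since $\phi$ decays at infinity, as in Lemma~\ref{lem-decay}) yields \eqref{eq-pot1} directly after multiplying by $4\pi R^2$.
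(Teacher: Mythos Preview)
Your proof is correct. Your main route differs from the paper's: you compute the spherical integral of $\phi$ directly by swapping the order of integration with the kernel, invoking Newton's formula for the spherical average of $|x-y|^{-1}$, and then rewriting $R^2/\max\{R,|y|\}$ as a radial integral via $\int_R^\infty r^{-2}\chi_{\{|y|<r\}}\,\dd r$. The paper instead takes precisely the alternative you sketch in your last paragraph: it differentiates the spherical mean $\bar\phi(R)$, uses the divergence theorem to obtain $\bar\phi'(R)=-(4\pi R^2)^{-1}\int_{B_R}w$, and integrates from $R$ to $\infty$ using the decay of $\phi$ (citing Lemma~\ref{lem-decay}). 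Your direct approach has the mild advantage of being self-contained---no appeal to the decay of $\phi$ at infinity is needed, since the interchange of integrals is justified purely by $w\in L^1$ together with the uniform bound $R^2/\max\{R,|y|\}\le R$---whereas the ODE argument is slightly shorter once that decay is granted. One small terminological slip: you invoke ``Tonelli'' where $w$ is signed, so strictly speaking it is Fubini; but your stated hypothesis (integrability of $|w|$ against a bounded kernel) is exactly the Fubini condition, so the argument is sound.
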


\begin{proof}
  Since $-\Delta\phi=w$, we have
  \begin{align} \label{prev-eq} %
    \frac{\dd}{\dd R}\med_{\partial B_R} \phi\dds\mathcal{H}^2 = \med_{\partial B_R}\frac{\partial\phi}{\partial\nu}\dds\mathcal{H}^2 = \frac{1}{4\pi R^2}
    \int_{B_R}\Delta\phi\dds x = - \frac{1}{4\pi R^2}\int_{B_R}w\dds x\,.
  \end{align}
  Integrating \eqref{prev-eq} between $R$ and $\infty$ and recalling that $\lim_{R\to\infty}\med_{\partial B_R}\phi=0$ by Lemma~\ref{lem-decay}, we obtain the conclusion.
\end{proof}

We next use the first variation formulas to show that minimizers of the relaxed problem \eqref{min-rel} minimize the absolute value of the total net charge within the set of admissible configurations.

\begin{proposition}[Saturation of charges] \label{prp-saturation}
For every $\lambda>0$, the solution $u_\lambda$ of the relaxed minimum problem \eqref{min-rel} with $|\Ome^+| = m$ satisfies
\begin{align} \label{sat-1} %
\int_{\R^3} u_\lambda \dds x \ = \ \min \{ \lambda, m \}\,.
\end{align}
Furthermore, for all $\lam \geq m$, we have $u_\lam = u_m$.
\end{proposition}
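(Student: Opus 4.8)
The plan is to prove the two claims in order: first the saturation identity \eqref{sat-1}, then the stability statement $u_\lambda = u_m$ for $\lambda \geq m$. For \eqref{sat-1}, note that $\int_{\R^3} u_\lambda \leq \min\{\lambda, m\}$ is essentially free: the constraint gives $\int u_\lambda \leq \lambda$, while since $u_\lambda$ vanishes on $\Omega^+$ and takes values in $[0,1]$, comparing the Coulomb energy against the candidate $u = 0$ (or rather using the structure of the potential) should force $\int u_\lambda \leq m = |\Omega^+|$; more precisely, if $\int u_\lambda > m$ one could decrease the energy by removing some of the negative charge, using Lemma~\ref{lem-EL}\ref{it:lem3.3-3} at a point where $\phi > 0$ together with Lemma~\ref{lem-phipos}. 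The reverse inequality $\int_{\R^3} u_\lambda \geq \min\{\lambda, m\}$ is the substantive part: supposing $\int u_\lambda < \min\{\lambda, m\}$, one is below the volume constraint, so Lemma~\ref{lem-EL}\ref{it:lem3.3-3} applies and gives $\int_{\R^3} \phi \eta \leq 0$ for every nonnegative $\eta$ supported in $\{u_\lambda < 1-\delta\}$; combined with $\phi \geq 0$ this forces $\phi = 0$ on a large set. I would then derive a contradiction using Lemma~\ref{lem-potential}: choosing $R$ large enough that $\supp u^+ \subset B_R$ and $\supp u_\lambda \subset B_R$ (possible after a truncation/rearrangement argument, or by first establishing support bounds), the identity \eqref{eq-pot2} applied to $w = u^+ - u_\lambda$ gives $\int_{\partial B_R} \phi \, \dd \mathcal H^2 = (m - \int u_\lambda) R > 0$, contradicting $\phi \equiv 0$ outside a bounded set — unless $\int u_\lambda = m$, and if $\lambda < m$ we instead contradict the assumption $\int u_\lambda < \lambda$ directly.

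The key mechanism is therefore: being strictly below \emph{both} the $\lambda$-budget and the neutrality threshold $m$ lets us freely add charge, the Euler--Lagrange inequality then kills $\phi$ on the complement of $\{u_\lambda = 1\}$, and the monopole identity from Lemma~\ref{lem-potential} says a net charge cannot have vanishing far-field potential. One has to be slightly careful that $u_\lambda$ may a priori have unbounded support, so I would either first prove $\phi$ has compact support (which follows from $\phi \geq 0$, $-\Delta \phi = u^+ - u_\lambda \leq 0$ outside $\supp u^+$ on the set $\{u_\lambda = 0\}$, plus the decay from Lemma~\ref{lem-decay} and a maximum-principle / Liouville-type argument), or work with $\int_{B_r} w$ as $r \to \infty$ in \eqref{eq-pot1} directly and use that $\int_{\R^3}(u^+ - u_\lambda) = m - \int u_\lambda$.

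For the second statement, fix $\lambda \geq m$. By \eqref{sat-1} we have $\int_{\R^3} u_\lambda = m = \int_{\R^3} u_m$, and in particular $u_\lambda$ is admissible for the problem \eqref{min-rel} with parameter $m$ (it satisfies $\int_{\R^3} u_\lambda \leq m$), while conversely $u_m$ is admissible for the problem with parameter $\lambda$ (since $\int_{\R^3} u_m = m \leq \lambda$). Hence $\mathcal E(u_\lambda) \geq \mathcal E(u_m)$ and $\mathcal E(u_m) \geq \mathcal E(u_\lambda)$, so both are minimizers of the \emph{same} problem; by the uniqueness in Proposition~\ref{prp-existence} we conclude $u_\lambda = u_m$ almost everywhere.

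The main obstacle I anticipate is the far-field/monopole contradiction step: making rigorous the passage from ``$\phi$ vanishes on a large set'' to an actual contradiction requires controlling the support (or at least the decay) of $u_\lambda$ well enough to invoke Lemma~\ref{lem-potential}, and pinning down exactly which of the two constraints ($\leq \lambda$ versus the implicit $\leq m$) is violated in each regime $\lambda \lessgtr m$. Everything else — the upper bound in \eqref{sat-1} and the stability statement — is a short consequence of the Euler--Lagrange lemmas and the uniqueness already established.
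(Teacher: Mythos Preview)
Your plan is essentially the paper's strategy: the lower bound in \eqref{sat-1} comes from combining Lemma~\ref{lem-EL}\ref{it:lem3.3-3} (available because $\int u_\lambda < \lambda$) with $\phi\geq 0$ and the monopole identity \eqref{eq-pot1}, and the stability statement $u_\lambda=u_m$ follows from uniqueness exactly as you describe. The paper runs the contradiction in the contrapositive order --- it shows $|B_R^c\cap\{\phi>0\}\cap\{u_\lambda<1\}|=\infty$ from the monopole bound $\int_{\partial B_R}\phi\geq (m-\mu)R$ and then applies Lemma~\ref{lem-EL}\ref{it:lem3.3-3} on a thin annulus --- but this is the dual of your ``$\phi=0$ on $\{u_\lambda<1\}\setminus\Omega^+$, then integrate the monopole bound to get $\int_{B_R^c}\phi=\infty$, impossible since $|\{u_\lambda=1\}|<\infty$ and $\phi$ is bounded''. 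Your instinct to use \eqref{eq-pot1} directly rather than \eqref{eq-pot2} to handle unbounded support is also what the paper does.

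There is one genuine confusion to fix. For the upper bound $\int u_\lambda\leq m$ when $\lambda>m$, you write that one could ``decrease the energy by removing some of the negative charge, using Lemma~\ref{lem-EL}\ref{it:lem3.3-3} at a point where $\phi>0$''. This does not work: Lemma~\ref{lem-EL}\ref{it:lem3.3-3} concerns \emph{adding} charge ($\eta\geq 0$), and a direct first-variation computation shows that removing charge $\e\eta$ with $\eta\geq 0$ changes the energy by $+2\e\int\phi\eta + O(\e^2)\geq 0$, so removal never strictly decreases the energy once $\phi\geq 0$ is known. The correct argument --- which you do allude to with ``using the structure of the potential'' and the \eqref{eq-pot1} approach --- is the paper's: if $\mu:=\int u_\lambda>m$ then $\int_{B_r}(u^+-u_\lambda)\to m-\mu<0$, so by \eqref{eq-pot1} (or equivalently by the derivative formula for spherical means) $\int_{\partial B_R}\phi<0$ for large $R$, contradicting $\phi\geq 0$ from Lemma~\ref{lem-phipos}. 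Replace the removal-of-charge heuristic with this monopole-sign argument and your proof goes through.
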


\begin{proof}
  Denote by $\phi$ the potential of $u^+-u_\lambda$ as in \eqref{def-phi} and choose $R_0>0$ such that $\Omega^+\subset B_{R_0}$. Arguing by contradiction, we first assume
  \begin{align} \label{ass-fals} %
    \mu \ :=\ \int_{\R^3} u_\lam \dds x \ <\ \min \{ \lambda, m \}\,.
  \end{align}
  Our argument is based on the fact that screening is not possible under the assumption \eqref{ass-fals}. Indeed, we will even show
  \begin{align} \label{no-screening} %
    \big| B_R^c\cap \{\phi>0\}\cap \{u_\lam<1\} \big| \ =\ \infty \quad\text{ for all }R>0.
  \end{align}
  We first note that \eqref{ass-fals} yields a contradiction if \eqref{no-screening} holds. Indeed, by \eqref{no-screening} we can choose $\delta>0$, $R>R_0$ such that
  $\big| (B_{R+1}\setminus B_R) \cap \{\phi>0\} \cap \{u_\lam<1-\delta\} \big| >0$.
  Letting $\eta:= \Chi_{(B_{R+1}\setminus B_R)\cap \{u_\lam<1-\delta\}}$, by Lemma~\ref{lem-EL}\ref{it:lem3.3-3} we deduce that
  \begin{align*}
    0\ \geq\ \int_{\R^3} \phi\eta \,\dd x \ =\ \int_{(B_{R+1}\setminus B_R)\cap \{u_\lam<1-\delta\}} \phi\,\dd x \ >\
    0\,,
  \end{align*}
  which is impossible. This shows that $\int_{\R^3} u_\lam \geq \min \{ \lambda, m \}$ and proves \eqref{sat-1} for $\lam \leq m$, since $u_\lambda\in \mathcal A_\lambda$.
  
  We next give the argument for \eqref{no-screening}. By \eqref{eq-pot1}, we have for all $R>R_0$
  \begin{align*}
    \int_{\partial B_R}\phi\dds\Ha^2
    = \int_R^\infty \frac{R^2}{r^2} \Big(\int_{B_r} (u^+-u_\lam) \dds x \Big)\dds r
    \geq\ \int_R^\infty \frac{(m-\mu)R^2}{r^2}\dds r = (m-\mu)R\,.
  \end{align*}
  By integrating this identity from $R$ to $\infty$, we get $\int_{B_R^c} \phi \ =\ \infty$. Since $\phi$ is uniformly bounded, this implies $|B_R^c\cap\{\phi>0\}|=\infty$. On the other hand, we have $|\{u_\lam=1\}|\ \leq\ \int_{\R^3}u_\lam \ =\ \mu$, which yields \eqref{no-screening}. 

  \medskip

  It remains to consider the case $\lam > m$ and to show that the assumption
  \begin{align} \label{ass-fals2}
    \mu \ :=\ \int_{\R^3} u_\lambda \dds x > m
  \end{align}
  yields a contradiction. 
  By Lemma~\ref{lem-phipos}, we have $\phi\geq 0$ in $\R^3$. On the other hand, by the proof of Lemma~\ref{lem-potential} we get
  \begin{align*}
    \frac{\dd}{\dd R} \med_{\partial B_R} \phi \dds\mathcal{H}^2 %
    &\lupref{prev-eq}= -\frac{1}{4\pi R^2} \int_{B_R} (u^+-u_\lambda)\dds x = \frac{1}{4\pi R^2} \biggl(\mu - m -\int_{\R^3\setminus B_R}u_\lambda\dds x \biggr)\,.
  \end{align*}
  Since by \eqref{ass-fals2} the last term is positive for $R$ sufficiently large, it follows that the mean value $\med_{\partial B_R}\phi$ of $\phi$ on spheres is a strictly increasing function of the radius, for large radii, vanishing in the limit as $R\to\infty$. This is clearly in contradiction with the fact that $\phi\geq0$. We conclude that $\int u_\lambda = m$ for $\lambda\geq m$ and, in turn, $u_\lambda=u_m$ by uniqueness of the minimizer.
\end{proof}

The previous proposition allows us to draw some conclusions on the dependence of the minimal energy in the relaxed problem \eqref{min-rel} on the parameter $\lambda$.

\begin{corollary}[Minimal energy as function of $\lam$] \label{cor-e} %
  For $\lambda>0$ let $e(\lambda):=\mathcal{E}(u_\lambda)=\min_{\mathcal{A}_\lambda}\mathcal{E}$. Then $e(\lambda)$ is continuous, strictly decreasing for $\lambda\in[0,m]$ and constant for $\lambda\geq m$.
\end{corollary}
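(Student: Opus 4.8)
The plan is to read off all three assertions from the properties of the relaxed minimizers $u_\lambda$ already established, without re‑running the direct method. Monotonicity is immediate: since $\mathcal A_\lambda\subseteq\mathcal A_{\lambda'}$ whenever $0<\lambda\le\lambda'$, minimizing over the larger class can only lower the value, so $e$ is non‑increasing on $(0,\infty)$; and the fact that $e(\lambda)=e(m)$ for all $\lambda\ge m$ is then nothing but Proposition~\ref{prp-saturation}, which gives $u_\lambda=u_m$ and hence $\mathcal E(u_\lambda)=\mathcal E(u_m)$.

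For the strict decrease on $[0,m]$ I would extend $e$ to $\lambda=0$ by setting $e(0):=\mathcal E(0)=\|u^+\|_{H^{-1}(\R^3)}^2$ — the only possible value since $\mathcal A_0=\{u\ge 0:\int_{\R^3}u\le 0,\ \int_{\Omega^+}u=0\}=\{0\}$ — and argue by contradiction. If $e(\lambda_1)=e(\lambda_2)$ for some $0\le\lambda_1<\lambda_2\le m$, then (with the convention $u_0:=0$) $u_{\lambda_1}\in\mathcal A_{\lambda_1}\subseteq\mathcal A_{\lambda_2}$ realizes the minimum of $\mathcal E$ over $\mathcal A_{\lambda_2}$, so the uniqueness part of Proposition~\ref{prp-existence} forces $u_{\lambda_1}=u_{\lambda_2}$. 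But Proposition~\ref{prp-saturation} gives $\int_{\R^3}u_{\lambda_i}=\min\{\lambda_i,m\}=\lambda_i$, contradicting $\lambda_1\ne\lambda_2$. Combined with monotonicity this yields $e(\lambda_2)<e(\lambda_1)$; note this crucially uses uniqueness, not just existence, and the exact saturated mass.

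It remains to prove continuity. On the open interval $(0,\infty)$ I would deduce it from convexity of $e$: given $\lambda_0,\lambda_1>0$ and $\theta\in(0,1)$, the function $\theta u_{\lambda_0}+(1-\theta)u_{\lambda_1}$ satisfies all the (convex) constraints defining $\mathcal A_{\theta\lambda_0+(1-\theta)\lambda_1}$, and since $u\mapsto\mathcal E(u)=\|u^+-u\|_{H^{-1}(\R^3)}^2$ is convex we get $e(\theta\lambda_0+(1-\theta)\lambda_1)\le\theta e(\lambda_0)+(1-\theta)e(\lambda_1)$; a convex real function on an open interval is continuous. (Alternatively one can use the admissible competitor $\tfrac{s}{t}u_t\in\mathcal A_s$ for $0<s<t$ and estimate $e(s)^{1/2}\le e(t)^{1/2}+(1-\tfrac st)\|u_t\|_{H^{-1}}$ via the triangle inequality, letting $s/t\to 1$.) For continuity at the endpoint $\lambda=0$ I would use that $0\in\mathcal A_\lambda$ for every $\lambda$, so $e(\lambda)\le e(0)$, together with the Riesz rearrangement estimate (as in the proof of Lemma~\ref{lem-decay}) $\|u_\lambda\|_{H^{-1}}^2\le\tfrac12\big(\tfrac{3}{4\pi}\big)^{2/3}\|u_\lambda\|_{L^1(\R^3)}^{5/3}$, where $\|u_\lambda\|_{L^1(\R^3)}=\min\{\lambda,m\}\to0$: the triangle inequality in $H^{-1}$ then gives $|e(\lambda)^{1/2}-e(0)^{1/2}|\le\|u_\lambda\|_{H^{-1}}\to0$ as $\lambda\to0^+$.

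I do not expect any genuine obstacle here once Propositions~\ref{prp-existence} and \ref{prp-saturation} are available: the subtle points are merely that strict monotonicity rests on uniqueness of the relaxed minimizer plus the precise value $\int u_\lambda=\min\{\lambda,m\}$, and that the single endpoint $\lambda=0$ needs the quantitative smallness of $\|u_\lambda\|_{H^{-1}}$, while convexity disposes of continuity at every interior point at once.
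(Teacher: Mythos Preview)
Your argument is correct. Monotonicity, strict monotonicity on $(0,m]$, and constancy for $\lambda\ge m$ are handled exactly as in the paper: inclusion of admissible classes, uniqueness of the relaxed minimizer combined with the saturated mass $\int u_\lambda=\min\{\lambda,m\}$, and Proposition~\ref{prp-saturation}, respectively.

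The only genuine difference is in how you prove continuity. The paper uses the scaled competitor $\tfrac{\lambda}{\lambda'}u_{\lambda'}\in\mathcal A_\lambda$ (which you mention parenthetically) and a sandwich argument to get one-sided continuity from each side. You instead observe that $e$ is convex on $(0,\infty)$, since $\theta u_{\lambda_0}+(1-\theta)u_{\lambda_1}\in\mathcal A_{\theta\lambda_0+(1-\theta)\lambda_1}$ and $\mathcal E$ is convex, and then invoke the standard fact that a convex function on an open interval is continuous. This is a clean alternative that in fact yields a little more structural information (convexity of $e$) at no extra cost. Your separate treatment of the endpoint $\lambda=0$ via the Riesz/bathtub bound $\|u_\lambda\|_{H^{-1}}^2\le\tfrac12\big(\tfrac{3}{4\pi}\big)^{2/3}\lambda^{5/3}$ and the triangle inequality in $H^{-1}$ is correct, though note that the paper effectively only claims continuity for $\lambda>0$ (the function $e$ is defined there only for $\lambda>0$), so this extra step is a harmless refinement rather than a requirement.
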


\begin{proof}
Since $\mathcal A_\lambda\subset\mathcal A_{\lambda'}$ for $\lambda\leq \lambda'$ the minimal energy $e(\lambda)$ is decreasing. The strict monotonicity of $e(\lambda)$, for $\lambda\leq m$, follows from the fact that if $e(\lambda)=e(\lambda')$ for some $0<\lambda<\lambda'\leq m$, then the uniqueness of minimizers would imply that $u_\lambda=u_{\lambda'}$, which is not permitted by \eqref{sat-1}. The fact that $e(\lambda)$ is constant for $\lambda\geq m$ follows also from Proposition~\ref{prp-saturation}.
  
  To prove that $e$ is continuous, we observe that for $\lambda'>\lambda$ we can use the function
  $\frac{\lambda}{\lambda'}u_{\lambda'} \in \mathcal{A}_\lambda$ as a competitor in the relaxed minimum problem \eqref{min-rel}, which yields $e(\lambda)\leq\mathcal{E}(\frac{\lambda}{\lambda'}u_{\lambda'})$. Hence, by monotonicity
  \begin{align*}
    e(\lambda)\leq\liminf_{\lambda'\searrow\lambda} \mathcal{E}\Bigl(\frac{\lambda}{\lambda'}u_{\lambda'}\Bigr) =
    \liminf_{\lambda'\searrow\lambda} e(\lambda') \leq e(\lambda)\,,
  \end{align*}
  which implies continuity from the right. Similarly, by considering $\lambda''<\lambda$ and comparing with
  $\frac{\lambda''}{\lambda}u_\lambda \in \mathcal{A}_{\lambda''}$ we obtain continuity from the left. Together this shows that $e$ is continuous.
\end{proof}

We next address the proofs of Theorem~\ref{thm-constrained} and Theorem~\ref{thm-unconstrained} on existence and uniqueness for the constrained and unconstrained minimum problems.

\begin{proof}[Proof of Theorem~\ref{thm-constrained}]
  We divide the proof into two steps.
  
  \smallskip
  
  {\it Step 1: the case $\lam \leq m$}. By Proposition~\ref{prp-existence} there exists a unique minimizer $u$ of $\mathcal E$ in the class of densities $\mathcal{A}_\lambda$. By Proposition~\ref{prp-saturation}, we have $\int_{\R^3}u\ =\ \lam$.  It therefore remains to show that the set $\{0<u<1\}$ has zero Lebesgue-measure. Arguing by contradiction, we assume that $|\{0<u<1\}|>0$. Then there exists $\delta>0$ such that the set
  $\mathcal{U}_\delta:=\{\delta<u<1-\delta\}$ has positive measure. We set $\eta(x):=(\phi(x)-c)\chi_{\mathcal{U}_\delta}(x)$,
  where
  \begin{align} \label{avg-con} %
    c:=\med_{\mathcal{U}_\delta} \phi(x)\dds x\,.
  \end{align}
  The function $\eta$ satisfies the assumptions of Lemma~\ref{lem-EL}\ref{it:lem3.3-1}, and we deduce
  \begin{align*}
    0=\int_{\R^3}\phi\eta\dds x = \int_{\mathcal{U}_\delta}\phi(\phi-c)\dds x \lupref{avg-con}=
    \int_{\mathcal{U}_\delta}(\phi-c)^2\dds x\,,
  \end{align*}
  which implies $\phi=c$ almost everywhere in $\mathcal{U}_\delta$. 
  By \eqref{def-phi} and standard elliptic theory, we have $\phi \in W^{2,p}_{\mathrm{loc}}(\R^3)$ for all $p < \infty$. From Stampacchia's Lemma \cite[Proposition~3.23]{GiMa12}, one can deduce that $\nabla\phi=0$ almost everywhere in $\mathcal{U}_\delta$ and then that $\Delta \phi=0$ almost everywhere in $\mathcal{U}_\delta$. On the other hand $\Delta\phi =u$ in $\mathcal{U}_\delta$, which contradicts our assumption.

  \medskip

  {\it Step 2: the case $\lam > m$}. Suppose that there exists a minimizer $\Omega^-$ of \eqref{min-con}, and let $u:=\chi_{\Omega^-}$. By Proposition~\ref{prp-saturation} the unique minimizer $u_\lambda$ of the corresponding relaxed problem \eqref{min-rel} is given by $u_\lambda=u_m$, in particular we have
  $$
  \EE(u_m) < \EE(u)\,.  
  $$
  Moreover, by the previous step, $u_m$ is in fact the characteristic function of a set. 
  For $R > 0$ let $u^R := u_m + (1-u_m)\chi_{B_R\setminus B_{\t R}}$, where $\t R= \t R(R) > 0$ is chosen such that $\int_{\R^3} u^R=\lambda$, which is equivalent to the condition
\begin{align*}
	\lambda \,=\, m + \frac{4\pi}{3}(R^3-\tilde R^3) - \int_{B_R\setminus B_{\t R}} u_m\ \dd x\,.
\end{align*}
We deduce first that $\t R(R)\to\infty$ as $R\to\infty$ and then $\frac{4\pi}{3}(R^3-\tilde R^3)=\lambda-m +o(1)$ as $R\to\infty$. Since $\Omega^+$ is bounded, and since $u_m$ takes values in $\{0,1\}$ almost everywhere we deduce that for $R$ sufficiently large $u^R$ is the characteristic function of an admissible set for the minimizing problem \eqref{min-con}. We claim that $\EE(u^R) \to \EE(u_m) < \EE(u)$. For $R$ large enough, this yields a contradiction to the statement that $u$ is a minimizer of \eqref{min-con}. To prove the convergence of $\EE(u^R)$ observe that an explicit calculation for the self-interaction energy of $\chi_{B_R\setminus B_{\t R}}$ yields
\begin{align*}
	\int_{B_R\setminus B_{\t R}}\int_{B_R\setminus B_{\t R}}\frac{1}{4\pi|x-y|} \dd y\dd x \,&=\,
	c(3\t R^5+2R^5-5\t R^3R^2) \\
	&=\, 15cR^3\delta^2 + c_1 R^2\delta^3 + c_2R\delta^4 + c_3\delta^5
\end{align*}
(see \eqref{eq:ring} in the Appendix), with $\delta=R-\t R=\frac{\lambda-m}{4\pi R^2} +\mathcal{O}(R^{-3})$.
This implies that the self-interaction energy of the annulus vanishes as $R\to\infty$. By a similar asymptotic analysis one shows that the interaction energy of the annulus with the charge distributions $u^+,u_m$ also tends to zero as $R\to\infty$.
\end{proof}

\begin{proof}[Proof of Theorem~\ref{thm-unconstrained}] %
By Theorem~\ref{thm-constrained}, there exists a unique minimizer $\Omega^-$ of the constrained minimum problem \eqref{min-con} for $\lambda=m$. We claim that $\Omega^-$ is also the unique solution of the unconstrained problem \eqref{min-unc}. Indeed, the conclusion follows immediately from the fact that $\chi_{\Omega^-}$ is the unique solution of the relaxed problem \eqref{min-rel} for $\lambda= m$, and by Proposition~\ref{prp-saturation} and Corollary \ref{cor-e}.
\end{proof}


\section{Proof of the screening property} \label{sec-screening} %

We now turn to the proof of Theorem~\ref{thm-screening} related to the screening of the positive charge.

\begin{proof}[Proof of Theorem~\ref{thm-screening}]
The nonnegativity of $\vphi$ is proved in Lemma~\ref{lem-phipos}.
We introduce the closed sets
\begin{align*}
	A^- \,&=\, \{x\in\R^3\,:\, |B_r(x)\cap\Omega^-|>0\,\text{ for all }r>0\}\,,\\
	A_0 \,&=\, \{x\in\R^3\,:\, |B_r(x)\setminus(\Omega^+\cup\Omega^-)|>0\,\text{ for all }r>0\}\,.
\end{align*}
Notice that $A^-$ and $A_0$ are independent of the precise representative $\Omega^-$ of the minimizer: indeed, they coincide with the closures of the sets of points of positive Lebesgue density of $\Omega^-$ and of $\R^3\setminus(\Omega^+\cup\Omega^-)$, respectively. Furthermore, $|\Omega^-\setminus A^-|=|(\R^3\setminus(\Omega^+\cup\Omega^-))\setminus A_0|=0$. We have selected the sets $A^-$ and $A_0$ as the largest possible sets for which we can apply the comparison argument in Step ~1 below. We now divide the proof of the theorem into four steps.

\smallskip

{\it Step 1.} In this step of the proof, we show that
\begin{align} \label{screen-step1} %
\sup_{A_0}\phi \leq \inf_{A^-}\phi\,.
\end{align}
Indeed, for \eqref{screen-step1} it is sufficient to prove that $\phi(x_0)\leq\phi(x_1)$ for every pair of points $x_0\in A_0$ and $x_1\in A^-$. Define a variation field $\eta\in L^1(\R^3;[0,1])$ by
\begin{align*}
\eta(x):=
\begin{cases}
\frac{1}{|B_r(x_0)\setminus(\Omega^+\cup\Omega^-)|} &\text{if }x\in B_r(x_0)\setminus(\Omega^+\cup\Omega^-), \\
-\frac{1}{|B_r(x_1)\cap\Omega^-|} &\text{if }x\in B_r(x_1)\cap\Omega^-, \\
0 & \text{otherwise}.
\end{cases}
\end{align*}
Then $\int_{\R^3} \eta = 0$, and, by an application of Lemma~\ref{lem-EL}(ii), we hence get
\begin{align*}
\med_{B_r(x_0)\setminus(\Omega^+\cup\Omega^-)}\phi(y)\dds y \leq \med_{B_r(x_1)\cap\Omega^-} \phi(x)\dds x\,.
\end{align*}
Since $\phi$ is continuous we deduce by letting $r\downarrow 0$ that $\phi(x_0)\leq \phi(x_1)$, which completes the proof of \eqref{screen-step1}.
  
\medskip
  
{\it Step 2.} We next show that
\begin{align} \label{screen-step2} %
\inf_{A^-}\phi =0\,.
\end{align}
Since $\varphi\geq 0$ this implies by \eqref{screen-step1} that $\vphi=0$ in $A_0$, which proves \eqref{eq-screen}.
  
To prove \eqref{screen-step2} we first consider the case in which $A^-$ is unbounded. Then there is a sequence $(x_n)_n$ in $A^-$ with $|x_n|\to\infty$. In view of Lemma~\ref{lem-decay}, this implies $\phi(x_n)\to 0$ and hence \eqref{screen-step2}.
  
It remains to consider the case when $A^-$ is bounded: let $R_0>0$ be such that $\Omega^+\cup A^-\subset B_{R_0}$.  Then \eqref{eq-pot2} yields $\int_{\partial B_R}\phi=0$ for every $R\geq R_0$, and since $\phi$ is nonnegative we obtain that $\phi\equiv0$ in $\R^3\setminus B_{R_0}$. Next we define the ``interior'' (in a measure-theoretic sense) of $A_0$, that is the open set
\begin{align*}
  	\tilde A_0 \,:=\, \{x\in\R^3\,:\, |B_r(x)\cap (\Omega^-\cup\Omega^+)|=0\,\text{ for some }r>0\}\subset A_0\,.
\end{align*}
As $\phi$ is harmonic in $\t A_0$ and $\phi\equiv0$ in $\R^3\setminus B_{R_0}$, we deduce that $\phi$ vanishes in the closure of the connected component $D$ of $\t A_0$ that contains $\R^3\setminus B_{R_0}$. If $\partial D\cap A^-\neq\emptyset$, we immediately obtain \eqref{screen-step2}.

It therefore remains to consider the case $\partial D\cap A^-=\emptyset$, and we now show that this case in fact never happens. Hence we argue by contradiction assuming that $\partial D\cap A^-=\emptyset$.
  
We first deduce that $\partial D \subset\partial\Omega^+$. In fact, by the assumption $\partial D\cap A^-=\emptyset$ any $x\in\partial D\setminus \partial\Omega^+$ has positive distance to the sets $\overline{\Omega^+}$ and $A^-$, which shows that $x\in \t A_0$ and implies that $x\not\in\partial D$ as $\t A_0$ is open. Therefore $\partial D\setminus \partial\Omega^+=\emptyset$.

We claim next that there exist a point $x_0\in\partial\Omega^+$ with $\vphi(x_0)=0$ and a ball $B_R\subset\Omega^+$ with $\partial B_R\cap\partial\Omega^+=\{x_0\}$ (we can assume without loss of generality that the ball is centered at the origin). In fact (note that at this stage we do not assume an inner sphere condition), choose any $x_*\in \partial D$. Without loss of generality we can assume that
\begin{align*}
  \Omega^+\cap B_r(x_*) = \{(y,t)\in \R^{2}\times\R\,:\, t>\psi(y)\}\cap B_r(x_*)
\end{align*}
for some Lipschitz function $\psi:\R^{2}\to\R$. Since $x_*\not\in A^-$ by the contradiction assumption, after possibly decreasing $r$ we obtain that $\varphi=0$ on $\graph(\psi)\cap B_r(x_*)$. Choose now any $x_1\in \Omega^+\cap B_r(x_*)$ with $R:=\dist(x_1,\graph(\psi))< \dist(x_1,\partial B_r(x_*))$. Then there exists $x_0\in \graph(\psi)\cap \partial B_R(x_1)$ and we deduce that $x_0$ and the ball $B_R(x_1)$ enjoy the desired properties (see Figure~\ref{fig-screen}).

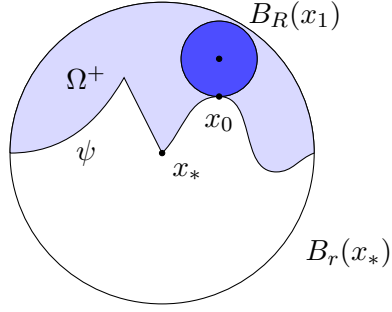
\begin{figure}
\begin{tikzpicture}[scale=0.5]
\draw (0,0) circle [radius=4];
\draw [fill=blue!15!white] (-4,0) to [out=0, in=240] (-1,2) to (0,0) to [out=45, in=180] (1.5,1.5) to [out=0, in=180] (3,-0.5) to [out=0, in=200] (4,0) to [out=90, in=0] (0,4) to [out=180, in=90] (-4,0);
\draw [ultra thin, fill=white] (1.5,2.5) circle [radius=1];
\draw [fill=blue!70!white] (1.5,2.5) circle [radius=1];
\draw [fill] (0,0) circle [radius=0.07];
\node [below right] at (0,0) {$x_*$};
\draw [fill] (1.5,2.5) circle [radius=0.07];
\draw [fill] (1.5,1.5) circle [radius=0.07];
\node [below] at (1.5,1.3) {$x_0$};
\node [below right] at (3.5,-2) {$B_r(x_*)$};
\node [above right] at (2,3) {$B_R(x_1)$};
\node at (-2,2) {$\Omega^+$};
\node at (-2,0) {$\psi$};
\end{tikzpicture}
\caption{The construction of an interior ball touching $\partial\Omega^+$ at a point $x_0$, used in the second step of the proof of Theorem~\ref{thm-screening}.}
\label{fig-screen}
\end{figure}

Since $-\Delta\vphi=1$ in $B_R$ and since $\phi$ is not constant in $B_R$ by Stampacchia's Lemma \cite[Proposition~3.23]{GiMa12}, the minimum principle shows that $\vphi>0$ in $B_R$. Then the Hopf boundary point Lemma \cite[Lemma~3.4]{GiTr01} further implies that $\partial_\nu\vphi(x_0)<0$ for $\nu=\frac{x_0}{|x_0|}$. Since $\vphi$ is of class $C^1$ we conclude that $\vphi(x_0+t\nu)<0$ for $t>0$ sufficiently small, which is a contradiction and completes the proof of claim \eqref{screen-step2} and, in turn, of \eqref{eq-screen}.
  
\medskip

{\it Step 3.} We now prove \eqref{def-Ome-} by showing that
\begin{align} \label{screen-step3} %
\big| \bigl(\Omega^- \bigtriangleup \{\phi>0\} \bigr) \setminus \Omega^+ \big|=0\,.
\end{align}
By \eqref{eq-screen}, we have $\phi(x) = 0$ for almost every $x \not\in {\Ome^+ \cup \Ome^-}$ and hence $\{ \phi > 0 \} \subset \Omega^+ \cup \Omega^-$ up to a Lebesgue nullset.
It remains to show that the set $U = \{\phi=0\} \cap \Ome^-$ satisfies $|U|=0$. Indeed, recalling that $\phi \in W^{2,p}_{\mathrm{loc}}(\R^3)$ for all $p < \infty$, using Stampacchia's Lemma \cite[Proposition~3.23]{GiMa12} as in the proof of Theorem~\ref{thm-constrained} we obtain $\nabla\phi=0$ almost everywhere in $U$ and then that $\Delta \phi=0$ almost everywhere in $U$. Since on the other hand $\Delta\phi = 1$ in $U$, this implies $|U| = 0$. The above arguments together yield \eqref{screen-step3}.

\medskip

{\it Step 4.} We finally show that
\begin{align} \label{screen-step4} %
\min_{\overline{\Omega}^+}\phi>0\,.
\end{align}
under the assumption that $\Ome^+$ satisfies the interior ball condition.
Indeed, if \eqref{screen-step4} does not hold, then we have $\min_{\overline{\Omega}^+}\phi = 0$. By the minimum principle and since $-\Delta\phi=1$ in $\Omega^+$, there is $x_0\in\partial\Omega^+$ such that $\phi(x_0)=0$. By the interior ball condition there exists $B_R(x_1)\subset\Omega^+$ with $\partial B_R(x_1)\cap\partial\Omega^+=\{x_0\}$. But then we can argue as in Step 2 above and obtain a contradiction to the fact that $\phi\geq0$.  This proves \eqref{screen-step4}.
\end{proof}

\begin{remark}
A-posteriori we can identify the set $\tilde A_0$ used in the previous proof with the set $\Omega_0$ defined in \eqref{def-Ome0}. To show this we fix the representative \eqref{def-Ome-} for $\Omega^-$. Since $\Omega^+$ has Lipschitz boundary $|\partial\Omega^+|=0$ holds and hence $\Omega^-\cup\Omega^+=\{\phi>0\}\cup\Omega^+$ up to a set of measure zero. By Stampacchia's Lemma \cite[Proposition~3.23]{GiMa12} we deduce as above that $|\Omega^+\cap\{\phi=0\}|=0$ and thus $\Omega^-\cup\Omega^+=\{\phi>0\}$ up to a set of measure zero. This proves that
\begin{align*}
	\tilde A_0 \,=\, \{x\in\R^3\,:\, |B_r(x)\cap\{\phi>0\}|=0\text{ for some }r>0\}
\end{align*}
and since $\{\phi>0\}$ is open
\begin{align*}
	\tilde A_0 \,=\, \{x\in\R^3\,:\, B_r(x)\cap\{\phi>0\}=\emptyset \text{ for some }r>0\}\,=\,\Omega_0.
\end{align*}
\end{remark}

\begin{remark} \label{rem-screening}
The screening property uniquely characterizes the minimizer, in the following sense: there exists a unique set $\Omega^-$ (up to a set of Lebesgue measure zero) such that the corresponding potential is nonnegative and vanishes outside $\Omega^+\cup\Omega^-$. Indeed, assume by contradiction that there exist two sets $\Omega^-_1$, $\Omega^-_2$ disjoint from $\Omega^+$. We set $u_1=\chi_{\Omega^-_1}$, $u_2=\chi_{\Omega^-_2}$ and let $\phi_1$ and $\phi_2$ be the corresponding potentials characterized by
\begin{align*}
\begin{cases}
-\Delta\phi_i = \chi_{\Omega^+} - \chi_{\Omega^-_i},\\
\lim_{|x|\to\infty}\phi_i(x)=0.
\end{cases}
\end{align*}
Then one has $\phi_i\geq0$ and $\vphi_i=0$ almost everywhere in $\R^3\setminus(\Omega^+\cup\Omega^-_i)$, $i=1,2$.  Hence, $-\Delta (\phi_1-\phi_2)= -(u_1-u_2)$ and testing this equation by $\phi_1-\phi_2$ gives
\begin{align*}
	\int_{\R^3} |\nabla(\phi_1-\phi_2)|^2\dds x \,=\, -\int_{\R^3} (u_1-u_2)(\phi_1-\phi_2)\dds x \,\leq\, 0\,,
\end{align*}
where in the last step we have used the screening property. Therefore $\phi_1-\phi_2$ is constant and since both vanish at infinity we deduce that $\phi_1=\phi_2$, which implies $|\Omega^-_1\Delta\Omega^-_2|=0$.
\end{remark}
Using the screening property we now further characterize $\Omega^-$ and in particular show that this set is essentially bounded.

\begin{proof}[Proof of Theorem~\ref{thm-support}]
The fact that $\Omega^-$, defined by \eqref{def-Ome-}, is open follows by continuity of $\vphi$.
We now turn to the proofs of the other statements.

\medskip

{\it Proof of \ref{it-2.4-1}.}  
We first recall that by the positivity of $\vphi$ and \eqref{eq:bounds-phi}  
\begin{align*} 
	0\,\leq\, \phi(x) \,\leq\, \frac{3^{2/3}}{2(4\pi)^{2/3}}m^{\frac{2}{3}}\quad\text{ for all }x\in\R^3
\end{align*}
holds. We now adapt the proof of \cite[Lemma~1]{Wei}. Consider $x_0\in {\{\phi>0\}}\setminus\overline{\Omega^+}$ and observe that for every $r<\dist(x_0,\Omega^+)$ we have $\overline{B_{r}(x_0)}\subset \R^3\setminus\overline{\Omega^+}$.
By the screening property, the function $w(x):=\vphi(x)-\frac16|x-x_0|^2$ is harmonic in $B_r(x_0)\cap\{\vphi>0\}$, and the maximum principle yields
$$
\max_{\partial(B_r(x_0)\cap\{\vphi>0\})} w \geq w(x_0) = \vphi(x_0)  >0\,.
$$
Since $w(x)\leq 0$ on $\partial\{\vphi>0\}$, we obtain
\begin{align*}
	0\,\leq\,\varphi(x_0) \,\leq\, \max_{\partial B_r(x_0)} w \,\leq\, \frac{3^{2/3}}{2(4\pi)^{2/3}}m^{\frac{2}{3}} -\frac{r^2}{6}\,,
\end{align*}
thus
\begin{align}
	r^2  \,\leq\, \frac{3\cdot 3^{2/3}}{(4\pi)^{2/3}}m^{\frac{2}{3}} \, \leq\, \frac{3}{2}m^{\frac{2}{3}}\,. \label{eq:boundphi}
\end{align} 
Letting $r\nearrow \dist(x_0,\Omega^+)$ we obtain \ref{it-2.4-1}.

\medskip

{\it Proof of \ref{it-2.4-2}.} By using $m\leq\frac43\pi(\frac12\diam\Omega^+)^3$ in \eqref{eq:boundphi} we easily obtain the estimate in \ref{it-2.4-2}.

\medskip

{\it Proof of \ref{it-2.4-3}.} Let $V$ be a connected component of $\Omega^-$, and assume by contradiction that $\partial V\cap\partial\Omega^+=\emptyset$. Then $-\Delta\phi\leq 0$ in $V$ and $\phi=0$ on $\partial V$, which implies by the maximum principle that $\phi\leq0$ in $V$, which is a contradiction. 

\medskip

{\it Proof of \ref{it-2.4-4}.} The fact that $\partial\Omega_0$ and $\partial\Omega^+$ have positive distance is a consequence of the continuity of $\phi$ and Theorem~\ref{thm-screening}.
\end{proof}


\section{Formulation as obstacle problem and regularity of minimizers} \label{sec-obstacle}

In the following proposition we show that, as a consequence of \eqref{eq-obstacle1}, the potential $\phi$ associated with a minimizer of \eqref{min-unc} can be characterized as the solution of an obstacle problem.

\begin{proposition}[Formulation as obstacle problem] \label{prp-obstacle} %
Let $\Omega^+$ be as in Theorem~\ref{thm-screening}, and let $D:=B_{R_0}\setminus\overline{\Omega^+}$, where $R_0$ is chosen so that $\overline{\Omega^+}\subset B_{R_0}$. Then the potential $\phi$ associated with the minimizer $\Omega^-$ of \eqref{min-unc} is the unique solution to the obstacle problem
\begin{align} \label{eq-obstacle2} \min \biggl\{ \int_{D} \Bigl( |\nabla\psi|^2 +2\psi \Bigr)\dds x \ : \ \psi\in H^1(D),\ \psi\geq0, \ \psi-\phi \in H^1_0(D) \biggr\} \,.
\end{align}
\end{proposition}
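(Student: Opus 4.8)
The plan is to read off the statement from the standard variational theory of the obstacle problem, once one observes that $\phi$ already satisfies the relevant pointwise complementarity conditions thanks to the identity $\Delta\phi=\chi_{\{\phi>0\}}$ in $\R^3\setminus\overline{\Omega^+}$ provided by \eqref{eq-obstacle1} (a consequence of Theorem~\ref{thm-screening}).

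First I would record the basic facts about \eqref{eq-obstacle2}. The admissible class
\[
\mathcal K \,:=\, \{\psi\in H^1(D)\,:\,\psi\geq0,\ \psi-\phi\in H^1_0(D)\}
\]
is nonempty — it contains $\phi$ itself, which belongs to $H^1(D)$ (since $\phi\in C^{1,\alpha}(\R^3)$, and by Theorem~\ref{thm-support} it even has compact support contained in $B_{R_0}$) and is nonnegative by Theorem~\ref{thm-screening} — and it is closed and convex in the affine space $\phi+H^1_0(D)$. The functional $J(\psi):=\int_D(|\nabla\psi|^2+2\psi)\dds x$ is strictly convex (its quadratic part $\int_D|\nabla\psi|^2\dds x$ is strictly convex on $H^1_0(D)$ by the Poincaré inequality on the bounded set $D$), coercive on $\phi+H^1_0(D)$, and sequentially weakly lower semicontinuous. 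Hence \eqref{eq-obstacle2} admits a unique minimizer $\psi^*\in\mathcal K$, characterized by the variational inequality
\[
\int_D \nabla\psi^*\cdot\nabla(\psi-\psi^*)\dds x + \int_D(\psi-\psi^*)\dds x\ \geq\ 0\qquad\text{for all }\psi\in\mathcal K.
\]

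Next I would verify that $\phi$ itself satisfies this variational inequality, which by uniqueness forces $\psi^*=\phi$. Fix $\psi\in\mathcal K$. Since $\overline D$ is a compact subset of $\R^3$, the elliptic regularity recalled in Section~\ref{sec-setting} gives $\phi\in W^{2,p}(D)$ for all $p<\infty$; as $\psi-\phi\in H^1_0(D)$, integration by parts (approximating $\psi-\phi$ in $H^1(D)$ by functions in $C_c^\infty(D)$) is legitimate and yields, using $\Delta\phi=\chi_{\{\phi>0\}}$ a.e.\ in $D\subset\R^3\setminus\overline{\Omega^+}$,
\[
\int_D \nabla\phi\cdot\nabla(\psi-\phi)\dds x = -\int_D \Delta\phi\,(\psi-\phi)\dds x = -\int_D \chi_{\{\phi>0\}}(\psi-\phi)\dds x.
\]
Adding $\int_D(\psi-\phi)\dds x$ and using $\phi\geq0$, we obtain
\[
\int_D \nabla\phi\cdot\nabla(\psi-\phi)\dds x + \int_D(\psi-\phi)\dds x = \int_{\{\phi=0\}\cap D}(\psi-\phi)\dds x = \int_{\{\phi=0\}\cap D}\psi\dds x\ \geq\ 0,
\]
since $\psi\geq0$. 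As $\phi\in\mathcal K$, this identifies $\phi$ with the unique minimizer of \eqref{eq-obstacle2}.

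The only points requiring a little care are the justification of the integration by parts up to the Lipschitz portion $\partial\Omega^+$ of $\partial D$ (handled as above via $\phi\in W^{2,p}(D)$ and density of $C_c^\infty(D)$ in $H^1_0(D)$) and the verification that $\phi$ really lies in $\mathcal K$ — in particular, Theorem~\ref{thm-support} guarantees $\supp\phi$ is a compact subset of $B_{R_0}$, so $\phi$ vanishes near $\partial B_{R_0}$ and the boundary datum in \eqref{eq-obstacle2} is consistent with the global formulation on $\R^3\setminus\overline{\Omega^+}$. I do not expect any genuine obstacle beyond these routine checks: the entire substance of the proposition is already contained in the screening identity \eqref{eq-obstacle1}.
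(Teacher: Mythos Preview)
Your argument is correct and follows a slightly more direct route than the paper's. Both proofs begin with existence and uniqueness of the minimizer of \eqref{eq-obstacle2} by convexity, and both ultimately rest on the screening identity $\Delta\phi=\chi_{\{\phi>0\}}$ in $D$ from \eqref{eq-obstacle1}. The paper then invokes the obstacle-problem regularity theory (citing \cite{PetShaUra}) to assert that the minimizer $\psi$ of \eqref{eq-obstacle2} satisfies the Euler--Lagrange system $\Delta\psi=\chi_{\{\psi>0\}}$, $\psi\geq0$, $\psi=\phi$ on $\partial D$, proves that this system has at most one solution by testing the difference of two solutions against itself, and concludes since $\phi$ also solves it. You instead verify directly, via integration by parts, that $\phi$ satisfies the variational inequality characterizing the minimizer. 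This avoids appealing to $W^{2,p}$ regularity for the (a~priori unknown) obstacle-problem minimizer and uses only the already-established regularity of $\phi$; the two approaches are otherwise equivalent and equally standard.

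One small inaccuracy: your claim that $\supp\phi\subset B_{R_0}$ is not guaranteed by the hypotheses of the proposition --- $R_0$ is only assumed large enough that $\overline{\Omega^+}\subset B_{R_0}$, whereas $\supp\phi$ may well be larger. Fortunately this plays no role in the argument: $\phi\in\mathcal K$ holds trivially since $\phi-\phi=0\in H^1_0(D)$, and the boundary condition on $\partial B_{R_0}$ is simply $\psi=\phi$ there, whatever that value is.
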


\begin{proof}
Existence and uniqueness of a solution of \eqref{eq-obstacle2} can be easily established by the direct method of the Calculus of Variations, and by strict convexity of the functional. Moreover, one can show (see, for instance, \cite[Section~1.3.2]{PetShaUra} for details) that the solution $\psi$ belongs to $W^{2,p}_{\mathrm{loc}}(D)$ for every $1<p<\infty$, and that it solves the Euler-Lagrange equations
  \begin{align} \label{eq-obstacle3}
    \begin{cases}
      \Delta\psi = \chi_{\{\psi>0\}} & \text{in }D,\\
      \psi\geq0 & \text{in }D,\\
      \psi=\phi & \text{on }\partial D.
    \end{cases}
  \end{align}
The conditions in \eqref{eq-obstacle3} completely characterize the minimizer of \eqref{eq-obstacle2}: indeed, if $\psi_1,\psi_2\in H^1(D)$ were two different solutions of \eqref{eq-obstacle3}, then for every $\eta\in H^1_0(D)$ we would have
  \begin{align*}
    \int_D \Bigl( \nabla\psi_i\cdot\nabla\eta + \eta\chi_{\{\psi_i>0\}} \Bigr) \dds x=0\,, \qquad i=1,2.
  \end{align*}
Using $\eta=\psi_1-\psi_2$ as a test function and subtracting the two resulting equations, we would get
  \begin{align*}
    0 = \int_D \Bigl( |\nabla(\psi_1-\psi_2)|^2 + (\psi_1-\psi_2) \bigl( \chi_{\{\psi_1>0\}} - \chi_{\{\psi_2>0\}}
    \bigr) \Bigr) \dds x \geq \int_D |\nabla(\psi_1-\psi_2)|^2\dds x\,,
  \end{align*}
which implies $\psi_1=\psi_2$.  Hence, since $\phi$ itself is a solution to \eqref{eq-obstacle3} by \eqref{eq-obstacle1} (which, in turn, follows from the screening property), we conclude that $\vphi$ is also the minimizer of \eqref{eq-obstacle2}.
\end{proof}

We next exploit the connection to the obstacle problem to deduce regularity properties of the free boundary.

\begin{proof}[Proof of Theorem~\ref{thm-regularity}]
By Proposition~\ref{prp-obstacle} the potential $\phi$ is the solution to the obstacle problem \eqref{eq-obstacle2} and solves equation \eqref{eq-obstacle1}. This problem has been widely investigated and we collect below the main results available in the literature, for whose proofs we refer the reader to the presentation in the book \cite{PetShaUra} and to the references contained therein (see, in particular, \cite{Caf77, Caf80, Caf98, Wei}).

First of all, by \cite[Theorem~2.3]{PetShaUra} one has that $\phi\in C^{1,1}_{\mathrm{loc}}(\R^3\setminus\overline{\Omega^+})$.
The \textit{free boundary} $\Gamma(\phi):=\partial\{\phi>0\}$ has locally finite $\mathcal{H}^2$-measure in $\R^3\setminus\overline{\Omega^+}$ by \cite[Lemma~3.13]{PetShaUra}.

Moreover, by \cite[Theorem~3.22, Theorem~3.23 and Definition~3.24]{PetShaUra} it follows that $\Gamma(\phi)=\Gamma_{\mathrm{reg}}\cup\Gamma_{\mathrm{sing}}$, where $\Gamma_{\mathrm{reg}}$ is a relatively open subset of $\Gamma(\phi)$ with analytic regularity (\cite[Theorem~4.20]{PetShaUra}), while $x_0\in\Gamma_{\mathrm{sing}}$ if and only if
  \begin{align*}
    \lim_{r\to0^+}\frac 1r \, \mindiam \bigl( \{\phi=0\}\cap B_r(x_0) \bigr) = 0
  \end{align*}
(\cite[Proposition~7.1]{PetShaUra}), from which it also follows that the Lebesgue density of $\{\phi=0\}$ is 0 at each point of $\Gamma_{\mathrm{sing}}$.

Now the properties in the statement follow by observing that $\partial\Omega_0\subset\Gamma(\phi)$ and $\Gamma(\phi)\setminus\partial\Omega_0\subset\Gamma_{\mathrm{sing}}$: indeed, the second inclusion is a consequence of the fact that a regular point $x_0\in\Gamma_{\mathrm{reg}}$ has a neighborhood in which $\Gamma(\phi)$ is regular, which implies that $x_0\in\partial\Omega_0$.
\end{proof}

In the following proposition we show how points in the regular part $\Gamma$ or in the singular part $\Sigma$ of $\partial\Omega_0$ (\emph{regular points} and \emph{singular points}, respectively) can be characterized in terms of the blow-up of the potential $\phi$ at those points \cite{Caf80}, and a structure result for the singular part \cite{Caf98}. A different characterization can be provided in terms of the Ou-Weiss energy functional, see \cite{Ou-1994,Wei}.

\begin{proposition}[Characterization of the singular set of $\p \Ome_0$] \label{prp-singbound} %
Under the assumptions of Theorem~\ref{thm-regularity}, let $\p\Ome_0 = \Gam \cup \Sig$, where $\Gam$ is the regular part of $\p \Ome_0$ and $\Sig$ is the singular part of $\p \Ome_0$.
The sets $\Gam$ and $\Sig$ can be characterized as follows: for $x_0\in\partial\Omega_0$, the corresponding rescaled potential $\phi_{r,x_0}$ is
  \begin{align*}
    \phi_{r,x_0}(x) := \frac{\phi(x_0+rx)-\phi(x_0)}{r^2}\,.
  \end{align*}
Then, after extraction of a subsequence, we have $\phi_{r,x_0}\to\phi_{x_0}$ in $C^{1,\alpha}_{\mathrm{loc}}(\R^3)$ for every $\alpha\in(0,1)$. The blow-up function $\phi_{x_0}$ has two possible behaviors, independent of the choice of subsequence: either $\phi_{x_0}$ resembles a half space solution, i.e.
  \begin{align} \label{phi-hs} \phi_{x_0}(x)=\frac12 \bigl[(x\cdot e)^+\bigr]^2 \qqquad \text{(half-space solution)}
  \end{align}
for some unit vector $e \in S^2$, or
  \begin{align} \label{phi-ps} %
    \phi_{x_0}(x)=\frac12 x \cdot A_{x_0}x \qqquad \text{(polynomial solution)}
  \end{align}
for some symmetric matrix $A_{x_0}$ with $\trace A_{x_0}=1$.
Then $x_0\in\Gamma$ if and only if \eqref{phi-hs} holds, while $x_0\in\Sigma$ if and only if \eqref{phi-ps} holds.

Moreover, setting for $d=0,1,2$
  \begin{align*}
    \Sigma^d := \{ x_0\in\Sigma \ :\ \dim\ker A_{x_0} = d\}\,,
  \end{align*}
each set $\Sigma^d$ is contained in a countable union of $d$-dimensional $C^1$-manifolds.
Finally, $\Sigma^0=\emptyset$.
\end{proposition}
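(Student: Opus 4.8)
The plan is to identify $\phi$ with the solution of the obstacle problem \eqref{eq-obstacle1}--\eqref{eq-obstacle2}, to invoke the classical blow-up analysis and structure theory for the free boundary of such problems, and finally to use the specific relation \eqref{def-Ome0} between $\Omega_0$ and $\{\phi>0\}$ to exclude the stratum $\Sigma^0$. Only this last point requires an argument beyond direct citation; everything else is assembled from the presentation in \cite{PetShaUra} and the original sources \cite{Caf77,Caf80,Caf98,Wei}.

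First I would set up the blow-ups at a free boundary point $x_0\in\partial\Omega_0$. Since $x_0\in\R^3\setminus\overline{\Omega^+}$, Theorem~\ref{thm-regularity} gives $\phi\in C^{1,1}_{\mathrm{loc}}$ near $x_0$, and since the nonnegative function $\phi$ vanishes at $x_0$ we have $\phi(x_0)=0$ and $\nabla\phi(x_0)=0$; together with the non-degeneracy bound $\sup_{\partial B_r(x_0)}\phi\geq c\,r^2$ valid at free boundary points, this makes the family $\{\phi_{r,x_0}\}$ bounded in $C^{1,1}_{\mathrm{loc}}(\R^3)$ uniformly in small $r$. Hence along any sequence $r\to0$ a subsequence converges in $C^{1,\alpha}_{\mathrm{loc}}(\R^3)$ to a global nonnegative solution $\phi_{x_0}$ of $\Delta\phi_{x_0}=\chi_{\{\phi_{x_0}>0\}}$, which is $2$-homogeneous by Weiss' monotonicity formula. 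At this point I would quote the classical facts from \cite{PetShaUra}: the classification of global $2$-homogeneous solutions into the half-space solutions \eqref{phi-hs} and the polynomial solutions \eqref{phi-ps} (the normalization $\trace A_{x_0}=1$ being forced by the equation together with non-degeneracy); the uniqueness of the blow-up at singular points, so that $\phi_{x_0}$, and in particular the matrix $A_{x_0}$, is independent of the subsequence; the relative openness and real-analyticity of the regular set $\Gamma_{\mathrm{reg}}$ (the points whose blow-up is of type \eqref{phi-hs}); and Caffarelli's structure theorem, according to which, for $d=0,1,2$, the set of singular free boundary points $x_0$ with $\dim\ker A_{x_0}=d$ is contained in a countable union of $d$-dimensional $C^1$-manifolds. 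Since, as established in the proof of Theorem~\ref{thm-regularity}, $\Gamma(\phi)\setminus\partial\Omega_0\subset\Gamma_{\mathrm{sing}}$, one has $\Gamma=\Gamma_{\mathrm{reg}}$ and $\Sigma=\Gamma_{\mathrm{sing}}\cap\partial\Omega_0$, so the dichotomy ($x_0\in\Gamma$ exactly when \eqref{phi-hs} holds, $x_0\in\Sigma$ exactly when \eqref{phi-ps} holds) and the $C^1$-rectifiability of each $\Sigma^d$ follow at once. The only nontrivial input here is the uniqueness of blow-ups at singular points, which is both what makes the definition of $\Sigma^d$ meaningful and what is needed in the next step.

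The remaining, and I expect only genuinely new, point is $\Sigma^0=\emptyset$. Suppose $x_0\in\Sigma^0$, so that the (unique) blow-up is the polynomial $\phi_{x_0}(x)=\tfrac12 x\cdot A_{x_0}x$ with $A_{x_0}$ symmetric, $\trace A_{x_0}=1$ and $\ker A_{x_0}=\{0\}$; since $\phi_{x_0}\geq0$ everywhere, $A_{x_0}$ is positive definite, so its smallest eigenvalue $\mu$ is strictly positive and $\phi_{x_0}(x)\geq\tfrac{\mu}{2}|x|^2$. By uniqueness of the blow-up, $r^{-2}\phi(x_0+rx)\to\phi_{x_0}(x)$ uniformly for $|x|=1$ as $r\to0$, so there is $\rho>0$ with $\phi(y)\geq\tfrac{\mu}{4}|y-x_0|^2$ for every $y\in B_\rho(x_0)$. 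Consequently $\{\phi=0\}\cap B_\rho(x_0)=\{x_0\}$, whence $\{\phi>0\}\supset B_\rho(x_0)\setminus\{x_0\}$, $\overline{\{\phi>0\}}\supset B_\rho(x_0)$, and therefore $\Omega_0\cap B_\rho(x_0)=\emptyset$ by \eqref{def-Ome0}. This contradicts $x_0\in\partial\Omega_0$, so no such $x_0$ exists and $\Sigma^0=\emptyset$.

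The main obstacle is thus not the computation in the last paragraph — which is short — but verifying that the hypotheses for the cited machinery are genuinely in force: that $\phi$ really solves the obstacle problem with right-hand side $\chi_{\{\phi>0\}}$ in a neighborhood of every point of $\partial\Omega_0$, that $\partial\Omega_0$ sits inside the free boundary $\Gamma(\phi)=\partial\{\phi>0\}$, and that the non-degeneracy and the $C^{1,1}$-regularity hold at these points. All of this is delivered by Proposition~\ref{prp-obstacle} and Theorem~\ref{thm-regularity}, so the proof reduces to citing \cite{PetShaUra} and carrying out the elementary exclusion of $\Sigma^0$ above.
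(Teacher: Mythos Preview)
Your proposal is correct and follows essentially the same approach as the paper: both invoke the blow-up classification and structure theory for the obstacle problem from \cite{PetShaUra} (the paper cites Theorems~3.22, 3.23 and 7.9 there), and both exclude $\Sigma^0$ by arguing that a positive-definite blow-up matrix forces $\{\phi=0\}\cap B_\rho(x_0)=\{x_0\}$, contradicting $x_0\in\partial\Omega_0$. The only minor difference is that the paper obtains the needed lower bound on $\phi$ near $x_0$ by quoting the quantitative decay estimate \cite[Proposition~7.7]{PetShaUra}, namely $|\phi(x)-\tfrac12 A_{x_0}(x-x_0)\cdot(x-x_0)|\leq\sigma(|x-x_0|)|x-x_0|^2$, whereas you derive it from uniqueness of the blow-up together with $C^{1,1}$-compactness; both routes are valid and yield the same conclusion.
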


\begin{proof}
For a proof of the classification of regular and singular points in terms of the blow-up of the potential, see \cite[Theorem~3.22 and Theorem~3.23]{PetShaUra}, while for the structure of $\Sigma$ see \cite[Theorem~7.9]{PetShaUra}.

We have only to show that the set $\Sigma^0$ is actually empty. Indeed, for $x_0\in\Sigma$ one has the decay estimate
  \begin{align*}
    |\phi(x) - {\textstyle\frac12} A_{x_0}(x-x_0)\cdot(x-x_0)| \leq \sigma(|x-x_0|) \ |x-x_0|^2
  \end{align*}
where $\sigma$ is a suitable modulus of continuity (see \cite[Proposition~7.7]{PetShaUra}). This property clearly implies that if $x_0\in\Sigma^0$ we have $\phi>0$ in $B_r(x_0)\setminus\{x_0\}$ for $r>0$ small enough, which in turn yields $x_0\notin\partial\Omega_0$.
\end{proof}


\section{A surface charge model} \label{sec-surface} %

In this section we discuss the asymptotic limit, in the variational sense of $\Gamma$-convergence (see
\cite{Bra,DM}), of our charge distribution model when the charge density of one phase is much higher than the one of the
other: this is achieved by rescaling the negative charge density by a factor $\frac{1}{\e}$ and by letting $\e$ go to
zero. In the limit model the admissible configurations are described by positive Radon measures supported in
$\R^3\setminus\Omega^+$, with the optimal configuration realized by a surface distribution of charge concentrated on
$\partial\Omega^+$. We remark that a similar limit model, in the particular case where the fixed domain $\Omega^+$ is
the union of a finite number of disjoint balls, was analyzed in \cite{CapFri}.

\medskip

Given two positive Radon measures $\mu,\nu\in\mathcal{M}^+(\R^3)$, we introduce the energy
\begin{equation*}
  \mathcal{I}(\mu,\nu) := \int_{\R^3}\int_{\R^3} \frac{1}{4\pi|x-y|}\dds\mu(x)\dd\nu(y)\,,
\end{equation*}
and we set $\mathcal{I}(\mu):=\mathcal{I}(\mu,\mu)$.  We also define the potential
\begin{align} \label{eq-potmu} \phi_\mu(x):= \int_{\R^3}\frac{1}{4\pi|x-y|}\dds\mu(y)\,,
\end{align}
and we note that
\begin{equation*}
  \mathcal{I}(\mu,\nu) = \int_{\R^3}\phi_\mu(x)\dds\nu(x) = \int_{\R^3}\phi_\nu(y)\dds\mu(y)\,.
\end{equation*}
We will denote by $\mu^+:=\chi_{\Omega^+}\LL^3$ the measure associated with the uniform charge distribution in
$\Omega^+$, and by $\phi_{\mu^+}$ the associated potential.  For $\lambda>0$, $\e>0$, we define the sets
\begin{align*}
  \mathcal{A}_\lambda &:= \biggl\{ \mu\in\mathcal{M}^+(\R^3) \ :\  \supp\mu\subset\R^3\setminus\Omega^+,\  \int_{\R^3}\dd\mu \leq\lambda \biggr\}\,,\\
  \mathcal{A}_{\lambda,\e} &:= \biggl\{ \mu\in\mathcal{A}_\lambda \ :\ \mu = u\LL^3,\ u:\R^3\to\big[0,{\e}^{-1}\big] \biggr\}\ 
\end{align*}
and the functionals on $\mathcal{M}(\R^3)$
\begin{equation*}
  \mathcal{F}_\e(\mu):=
  \begin{cases}
    -2\mathcal{I}(\mu^+,\mu) + \mathcal{I}(\mu) & \text{if }\mu\in\mathcal{A}_{\lambda,\e},\\
    \infty & \text{otherwise,}
  \end{cases}
\end{equation*}
\begin{equation*}
  \mathcal{F}(\mu):=
  \begin{cases}
    -2\mathcal{I}(\mu^+,\mu) + \mathcal{I}(\mu) & \text{if }\mu\in\mathcal{A}_{\lambda},\\
    \infty & \text{otherwise.}
  \end{cases}
\end{equation*}
\begin{theorem} \label{thm-gammaconv} %
  Assume that $\Omega^+\subset\R^3$ is an open, bounded set with Lipschitz boundary.
  The family of functionals $(\mathcal{F}_\e)_\e$ $\Gamma$-converge, as $\e\to0$, to the functional $\mathcal{F}$ with
  respect to weak*-convergence in $\mathcal{M}(\R^3)$.
\end{theorem}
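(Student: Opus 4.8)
The plan is to verify the two inequalities defining $\Gamma$-convergence for the weak* topology on $\mathcal{M}(\R^3)$: the lower bound $\mathcal{F}(\mu)\le\liminf_{\e\to0}\mathcal{F}_\e(\mu_\e)$ along every sequence $\mu_\e\wstar\mu$, and, for every $\mu$, the existence of a recovery sequence $\mu_\e\wstar\mu$ with $\limsup_{\e\to0}\mathcal{F}_\e(\mu_\e)\le\mathcal{F}(\mu)$. Since $\mathcal{I}(\mu^+,\mu)=\int_{\R^3}\phi_{\mu^+}\dd\mu\le\|\phi_{\mu^+}\|_{\infty}\lambda<\infty$ for every $\mu\in\mathcal{A}_\lambda$, we have $\mathcal{F}(\mu)=+\infty$ exactly when $\mu\notin\mathcal{A}_\lambda$ or $\mathcal{I}(\mu)=+\infty$; in those cases the $\limsup$ inequality is vacuous (take any sequence $\mu_\e\wstar\mu$, e.g.\ $\mu_\e\equiv\mu$), so the only substantial case for the recovery sequence is $\mu\in\mathcal{A}_\lambda$ with $\mathcal{I}(\mu)<\infty$. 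Throughout I will use that $\phi_{\mu^+}$, being the Newtonian potential of the bounded, compactly supported density $\chi_{\Omega^+}$, belongs to $C_0(\R^3)$, and that $\mu\mapsto\mathcal{I}(\mu)=\mathcal{I}(\mu,\mu)$ is weak* lower semicontinuous on $\mathcal{M}^+(\R^3)$ (principle of descent, see \cite{Lan}).

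For the $\Gamma$-$\liminf$ inequality, let $\mu_\e\wstar\mu$. We may assume $\liminf_{\e\to0}\mathcal{F}_\e(\mu_\e)<+\infty$ and, passing to a subsequence realising the $\liminf$, that $\mu_\e\in\mathcal{A}_{\lambda,\e}$ for all $\e$. Testing the weak* convergence against functions in $C_c(\R^3)$ with values in $[0,1]$ gives $\int_{\R^3}\dd\mu\le\lambda$, and testing against functions in $C_c(\Omega^+)$ gives $\mu(\Omega^+)=0$; hence $\mu\in\mathcal{A}_\lambda$. Since $\phi_{\mu^+}\in C_0(\R^3)$ the mixed term passes to the limit, $\mathcal{I}(\mu^+,\mu_\e)=\int_{\R^3}\phi_{\mu^+}\dd\mu_\e\to\int_{\R^3}\phi_{\mu^+}\dd\mu=\mathcal{I}(\mu^+,\mu)$, while $\liminf_{\e\to0}\mathcal{I}(\mu_\e)\ge\mathcal{I}(\mu)$ by lower semicontinuity; adding the two gives $\mathcal{F}(\mu)\le\liminf_{\e\to0}\mathcal{F}_\e(\mu_\e)$.

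For the recovery sequence, fix $\mu\in\mathcal{A}_\lambda$ with $\mathcal{I}(\mu)<\infty$. The idea is to transport $\mu$ slightly away from $\overline{\Omega^+}$ and then mollify at a much finer scale. Since $\Omega^+$ is a bounded Lipschitz domain, one constructs a smooth compactly supported vector field on $\R^3$, transversal to $\partial\Omega^+$ and pointing into $\Omega^+$, whose flow $(\Phi_s)_s$ satisfies, for $s\in(0,s_0)$: $\Phi_{-s}\to\mathrm{id}$ uniformly; $\Phi_{-s}$ is bi-Lipschitz with constants tending to $1$ as $s\to0$; and $\dist\bigl(\Phi_{-s}(x),\overline{\Omega^+}\bigr)\ge c_0 s$ for every $x\in\R^3\setminus\Omega^+$, with $c_0>0$ depending only on $\Omega^+$. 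Let $\rho_\delta$ be a radial mollifier with $\|\rho_\delta\|_{\infty}=C\delta^{-3}$, set $\delta(\e):=(C\lambda\e)^{1/3}$ and $t(\e):=\e^{1/6}$, so that $\delta(\e),t(\e)\to0$ and $\delta(\e)/t(\e)\to0$, and define
\[
  \mu_\e := \bigl((\Phi_{-t(\e)})_\#\mu\bigr)*\rho_{\delta(\e)} = u_\e\,\mathcal{L}^3\,.
\]
Then $\mu_\e\in\mathcal{A}_{\lambda,\e}$: the total mass equals $\mu(\R^3)\le\lambda$; one has $0\le u_\e\le\lambda\|\rho_{\delta(\e)}\|_\infty=\e^{-1}$; and $\supp\mu_\e\subset\{x:\dist(x,\overline{\Omega^+})\ge c_0 t(\e)-\delta(\e)\}\subset\R^3\setminus\Omega^+$ for $\e$ small. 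Moreover $\mu_\e\wstar\mu$, since for $\psi\in C_0(\R^3)$ one has $\int_{\R^3}\psi\dd\mu_\e=\int_{\R^3}\bigl((\psi*\rho_{\delta(\e)})\circ\Phi_{-t(\e)}\bigr)\dd\mu\to\int_{\R^3}\psi\dd\mu$. The mixed term converges as in the previous paragraph (again $\phi_{\mu^+}\in C_0$), and the self-energy satisfies $\mathcal{I}(\mu_\e)\le\mathcal{I}\bigl((\Phi_{-t(\e)})_\#\mu\bigr)$ because convolution with a radial probability kernel does not increase the Riesz energy (the kernel $\tfrac{1}{4\pi|\cdot|}$ being superharmonic); finally $\mathcal{I}\bigl((\Phi_{-t(\e)})_\#\mu\bigr)=\iint\frac{\dd\mu(x)\dd\mu(y)}{4\pi|\Phi_{-t(\e)}x-\Phi_{-t(\e)}y|}\to\mathcal{I}(\mu)$ by dominated convergence, since the integrand converges pointwise to $\frac{1}{4\pi|x-y|}$ and is bounded, for $\e$ small, by $\frac{2}{4\pi|x-y|}\in L^1(\mu\otimes\mu)$ (using the bi-Lipschitz bound and $\mathcal{I}(\mu)<\infty$). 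Hence $\limsup_{\e\to0}\mathcal{F}_\e(\mu_\e)\le-2\mathcal{I}(\mu^+,\mu)+\mathcal{I}(\mu)=\mathcal{F}(\mu)$, which, together with the $\liminf$ bound, makes $(\mu_\e)$ a recovery sequence.

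The main obstacle is the construction of the transport maps $\Phi_{-s}$: one has to turn the qualitative Lipschitz regularity of $\partial\Omega^+$ into a quantitative, uniform way of pushing $\supp\mu$ to distance of order $s$ from $\overline{\Omega^+}$ by a near-identity bi-Lipschitz map, so that a subsequent mollification at scale $\delta\ll s$ keeps the density below $\e^{-1}$ while preserving the constraint $\supp\mu_\e\subset\R^3\setminus\Omega^+$; this is precisely where the assumption that $\Omega^+$ has Lipschitz boundary is used (it provides a uniform exterior cone condition, equivalently a globally defined transversal vector field). All the other ingredients — weak* lower semicontinuity of the Riesz energy, superharmonicity of the Newtonian kernel, $\phi_{\mu^+}\in C_0(\R^3)$, and the dominated convergence argument for the transported energy — are standard.
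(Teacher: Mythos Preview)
Your proof is correct. The $\Gamma$-$\liminf$ part is essentially identical to the paper's (weak* lower semicontinuity of $\mathcal{I}$ from \cite{Lan}, convergence of the mixed term via $\phi_{\mu^+}\in C_0(\R^3)$).

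For the recovery sequence the paper proceeds by three successive reductions with an implicit diagonal argument: first push $\supp\mu$ away from $\overline{\Omega^+}$ via a near-identity diffeomorphism $\Phi_\delta$ (this is exactly your transversal flow, and is where the Lipschitz hypothesis enters); then mollify to reduce to an absolutely continuous $\mu=u\,\mathcal{L}^3$ with $\supp u\subset\subset\R^3\setminus\overline{\Omega^+}$ (using convexity of $\mathcal{I}$, i.e.\ Jensen, in place of your superharmonicity argument---these are equivalent here); finally take $\mu_\e=(u\wedge\e^{-1})\mathcal{L}^3$ and conclude by dominated convergence. Your construction collapses all three steps into a single explicit sequence $\mu_\e=((\Phi_{-t(\e)})_\#\mu)*\rho_{\delta(\e)}$ with coupled scales $\delta(\e)\sim\e^{1/3}\ll t(\e)\sim\e^{1/6}$: the choice of $\delta(\e)$ forces the density bound $u_\e\le\e^{-1}$ directly, so no truncation is needed, and the dominated convergence for $\mathcal{I}((\Phi_{-t})_\#\mu)\to\mathcal{I}(\mu)$ via the bi-Lipschitz bound replaces the paper's separate treatment of the push-forward limit. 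Your route is more streamlined and avoids the diagonal extraction; the paper's modular reduction has the advantage that each step is independently reusable and slightly less demanding on the quantitative properties of $\Phi_{-s}$ (the paper only needs $\Omega^+\subset\subset\Phi_\delta(\Omega^+)$, not the uniform lower bound $\dist(\Phi_{-s}(x),\overline{\Omega^+})\ge c_0 s$ on all of $\R^3\setminus\Omega^+$).
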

\begin{proof}
  We prove the two properties of the definition of $\Gamma$-convergence.

  \medskip

  \textit{Liminf inequality.}  Given $\mu_\e\wstar\mu$ weakly* in $\mathcal{M}(\R^3)$, we have to
  show that $\mathcal{F}(\mu)\leq\liminf_{\e\to0}\mathcal{F}_\e(\mu_\e)$.  We can assume without loss of generality that
  $\liminf_{\e\to0}\mathcal{F}_\e(\mu_\e)<\infty$, so that $\mu_\e\in\mathcal{A}_{\lambda,\e}$ and $\mu_\e=u_\e\LL^3$.
  Clearly $\supp\mu\subset\R^3\setminus\Omega^+$, and by lower semicontinuity
  $\mu(\R^3)\leq\liminf_{\e\to 0}\mu_\e(\R^3)\leq\lambda$, which implies that $\mu\in\mathcal{A}_\lambda$.  We then need to
  show that
  \begin{align*}
    -2\mathcal{I}(\mu^+,\mu)+\mathcal{I}(\mu) \leq\liminf_{\e\to0} \Bigl(
    -2\mathcal{I}(\mu^+,\mu_\e)+\mathcal{I}(\mu_\e) \Bigr)\,.
  \end{align*}
  Since the functional $\mathcal{I}$ is lower semicontinuous with respect to weak*-convergence of positive measures (see
  \cite[equation (1.4.4)]{Lan}), we immediately have
  \begin{align*}
    \mathcal{I}(\mu)\leq\liminf_{\e\to0}\mathcal{I}(\mu_\e)\,.
  \end{align*}
  Moreover, the convergence $\mu_\e\wstar\mu$ and $\sup_\e\mu_\e(\R^3)<\infty$ imply that
  \begin{align*}
    \lim_{\e\to0}\int_{\R^3}f\dds\mu_\e = \int_{\R^3}f\dds\mu
  \end{align*}
  for every $f\in C^0_0(\R^3):=\{g\in C^0(\R^3) : \{|g|>\e\} \text{ is compact for every }\e>0\}$.  Hence, since
  $\phi_{\mu^+}\in C^0_0(\R^3)$ we conclude that
  \begin{align} \label{eq-gammaconv1} \lim_{\e\to0}\mathcal{I}(\mu^+,\mu_\e) = \lim_{\e\to0}\int_{\R^3}\phi_{\mu^+}\
    \dd\mu_\e = \int_{\R^3}\phi_{\mu^+}\dds\mu = \mathcal{I}(\mu^+,\mu)\,,
  \end{align}
  which completes the proof of the liminf inequality.

  \medskip

  \textit{Limsup inequality.}  Given a measure $\mu\in\mathcal{M}(\R^3)$, we need to construct a
  recovery sequence $\mu_\e\wstar\mu$ such that $\limsup_{\e\to0}\mathcal{F}_\e(\mu_\e)\leq\mathcal{F}(\mu)$.  We can
  assume without loss of generality that $\mathcal{F}(\mu)<\infty$, so that $\mu\in\mathcal{A}_\lambda$.

  We first show that without loss of generality we can assume that
  $\supp\mu\subset\subset\R^3\setminus\overline{\Omega^+}$.  Indeed, since $\partial\Omega^+$ is Lipschitz, we can
  define for every $\delta>0$ a map $\Phi_\delta\in C^\infty(\R^3;\R^3)$ such that
  $\Omega^+\subset\subset\Phi_\delta(\Omega^+)$ and $\|\Phi_\delta-Id\|_{C^1(\R^3)}\to0$ as $\delta\to0$ (the map
  $\Phi_\delta$ ``pushes'' the boundary of $\Omega^+$ in the complement of $\Omega^+$).  We define the push-forward
  $\mu_\delta$ of the measure $\mu$ by setting for every continuous function $f$
  \begin{align*}
    \int_{\R^3}f\dds\mu_\delta := \int_{\R^3}f\circ\Phi_\delta\dds\mu\,.
  \end{align*}
  It is not hard to see that $\mu_\delta\in\mathcal{A}_\lambda$,
  $\supp\mu_\delta\subset\subset\R^3\setminus\overline{\Omega^+}$, $\mu_\delta\wstar\mu$ weakly* in $\mathcal{M}(\R^3)$
  and $\mathcal{F}(\mu_\delta)\to\mathcal{F}(\mu)$ as $\delta\to0$.  This shows that it is sufficient to provide a
  recovery sequence in the case $\supp\mu\subset\subset\R^3\setminus\overline{\Omega^+}$.

  \medskip

  We now reduce to the case of a measure absolutely continuous with respect to the Lebesgue measure.  Indeed, we define
  for $\delta>0$ the convolution
  \begin{align*}
    \mu_\delta := \rho_\delta*\mu = \int_{\R^3}\rho_\delta(\cdot - y)\dds\mu(y)\,,
  \end{align*}
  where $\rho_\delta\in C^{\infty}_{\mathrm c}(B_\delta)$, $\rho_\delta\geq0$, $\int_{B_\delta}\rho_\delta=1$ is a
  sequence of mollifiers.  Then $\mu_\delta\in\mathcal{A}_\lambda$ for $\delta$ sufficiently small (since we are
  assuming $\supp\mu\subset\subset\R^3\setminus\overline{\Omega^+}$), $\mu_\delta$ is absolutely continuous with respect
  to the Lebesgue measure and $\mu_\delta\wstar\mu$ (see \cite[Theorem~1.26]{Mat95}).  We now show that we also have
  $\mathcal{F}(\mu_\delta)\to\mathcal{F}(\mu)$.  Indeed, the convergence of $\mathcal{I}(\mu^+,\mu_\delta)$ to
  $\mathcal{I}(\mu^+,\mu)$ can be proved exactly as in \eqref{eq-gammaconv1}; moreover
  \begin{align*}
    \mathcal{I}(\mu) = \int_{B_\delta}\rho_\delta(z)\mathcal{I}(\mu(\cdot - z))\dds z \geq
    \mathcal{I}\biggl(\int_{B_\delta}\rho_\delta(z)\dds\mu(\cdot-z)\biggr) = \mathcal{I}(\mu_\delta)
  \end{align*}
  (the first equality is due to the translation invariance of the functional $\mathcal{I}$, while the inequality is a
  consequence of Jensen's inequality and of the convexity of $\mathcal{I}$), which combined with the lower
  semicontinuity of $\mathcal{I}$ leads to $\lim_{\delta\to0}\mathcal{I}(\mu_\delta)=\mathcal{I}(\mu)$.  This yields
  $\mathcal{F}(\mu_\delta)\to\mathcal{F}(\mu)$.
\medskip

  Hence, to complete the proof it remains just to provide a recovery sequence in the case of a measure
  $\mu\in\mathcal{A}_\lambda$ absolutely continuous with respect to the Lebesgue measure and such that
  $\supp\mu\subset\subset\R^3\setminus\overline{\Omega^+}$.  This can be done by a simple truncation argument: denoting
  by $u$ the Lebesgue density of $\mu$, we define $\mu_\e:=(u\wedge\frac{1}{\e})\mathcal{L}^3$: it is then clear that
  $\mu_\e\in\mathcal{A}_{\lambda,\e}$ and that $\mathcal{F}_\e(\mu_\e)\to\mathcal{F}(\mu)$ by the Lebesgue Dominated
  Convergence Theorem.
\end{proof}
In the following proposition we discuss the limit problem, showing that the minimizer is obtained by a surface
distribution of charge on $\partial\Omega^+$.
\begin{proposition}\label{prp-surfacecharge}
  Let $\mu\in\mathcal{M}^+(\partial\Omega^+)$ be a solution to the minimum problem
  \begin{align} \label{min3} \min\biggl\{ \mathcal{F}(\mu) : \mu\in\mathcal{M}^+(\partial\Omega^+),\
    \int_{\partial\Omega^+}\dds\mu=m \biggr\}\,.
  \end{align}
  Then $\mu$ is the unique minimizer of $\mathcal{F}$ over $\mathcal{A}_m$.
\end{proposition}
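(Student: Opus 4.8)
The plan is to identify the optimal surface charge explicitly, as the \emph{balayage} (swept measure) $\hat\mu$ of $\mu^+=\chi_{\Omega^+}\LL^3$ onto the compact set $\partial\Omega^+$: the point is that sweeping onto $\partial\Omega^+$ cancels the Coulomb potential of $\mu^+$ in the exterior while preserving the total charge. Concretely, I would let $h$ be the function which is harmonic in $\Omega^+$ and equal to $\phi_{\mu^+}$ on $\R^3\setminus\Omega^+$ — well defined and continuous because $\partial\Omega^+$ is Lipschitz, hence regular for the Dirichlet problem — and set $\hat\mu:=-\Delta h$ in the sense of distributions. The first step is to collect the classical properties of $\hat\mu$ (see e.g.\ \cite{Lan}): it is a nonnegative measure, since $\phi_{\mu^+}$ is superharmonic (as $-\Delta\phi_{\mu^+}=1$ in $\Omega^+$) and $h\leq\phi_{\mu^+}$ everywhere with equality on $\partial\Omega^+$, so that $h$ itself is superharmonic on $\R^3$; it is supported in $\partial\Omega^+$, since $h$ is harmonic on $\R^3\setminus\partial\Omega^+$; one has $h=\phi_{\hat\mu}$, because $h-\phi_{\hat\mu}$ is harmonic on $\R^3$ and vanishes at infinity, whence $\phi_{\hat\mu}=\phi_{\mu^+}$ on $\R^3\setminus\Omega^+$; and $\hat\mu(\R^3)=m$, by comparing the far-field expansions $h=\phi_{\mu^+}=\tfrac{m}{4\pi|x|}+\mathcal O(|x|^{-2})$ and $h=\phi_{\hat\mu}=\tfrac{\hat\mu(\R^3)}{4\pi|x|}+\mathcal O(|x|^{-2})$ as $|x|\to\infty$. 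Consequently $\hat\mu$ is admissible both in \eqref{min3} and for the minimization of $\mathcal F$ over $\mathcal A_m$, it has finite energy (as $0\leq h\leq\phi_{\mu^+}$ is bounded), and its net potential $\psi:=\phi_{\mu^+}-\phi_{\hat\mu}$ vanishes on $\R^3\setminus\Omega^+$.

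I would then show that $\hat\mu$ is the \emph{unique} minimizer of $\mathcal F$ over $\mathcal A_m$. Given any $\nu\in\mathcal A_m$ (we may assume $\mathcal I(\nu)<\infty$, since otherwise $\mathcal F(\nu)=+\infty$ and there is nothing to prove), expand the quadratic functional $\mathcal F$ around $\hat\mu$, the form $\mathcal I$ being extended bilinearly to finite-energy signed measures:
\begin{align*}
  \mathcal F(\nu)-\mathcal F(\hat\mu) \,=\, \mathcal I(\nu-\hat\mu) + 2\,\mathcal I(\mu^+-\hat\mu,\,\hat\mu-\nu) \,=\, \mathcal I(\nu-\hat\mu) + 2\int_{\R^3}\psi\,\dd(\hat\mu-\nu)\,.
\end{align*}
Both $\hat\mu$ and $\nu$ are supported in $\R^3\setminus\Omega^+$, where $\psi\equiv0$, so the last integral vanishes; and $\mathcal I(\nu-\hat\mu)\geq0$, with equality if and only if $\nu=\hat\mu$ (as $\mathcal I$ is the squared $H^{-1}$-norm on finite-energy signed measures). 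Hence $\mathcal F(\nu)\geq\mathcal F(\hat\mu)$, with equality exactly for $\nu=\hat\mu$.

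Finally I would conclude as follows: the feasible set of \eqref{min3} is contained in $\mathcal A_m$ and contains $\hat\mu$, so, since $\hat\mu$ minimizes $\mathcal F$ over the larger class $\mathcal A_m$, it is in particular a minimizer in \eqref{min3}; moreover the minimizer of \eqref{min3} is unique, because its feasible set is convex and $\mathcal I$ — hence $\mathcal F$, which differs from $\mathcal I$ by an affine term — is strictly convex on it. Therefore the given solution $\mu$ of \eqref{min3} coincides with $\hat\mu$, which by the previous paragraph is the unique minimizer of $\mathcal F$ over $\mathcal A_m$, as claimed.

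The only genuine work lies in the first paragraph, namely in assembling the classical properties of the balayage $\hat\mu$ — its nonnegativity, the identity $\phi_{\hat\mu}=\phi_{\mu^+}$ on $\R^3\setminus\Omega^+$, and the mass identity $\hat\mu(\R^3)=m$ — which rest on the maximum principle and on the solvability of the Dirichlet problem for the Lipschitz domain $\Omega^+$; everything else is the one-line bilinear identity above together with a convexity and uniqueness argument.
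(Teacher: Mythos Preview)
Your proof is correct and reaches the same conclusion as the paper, but the route is genuinely different. The paper starts from the \emph{given} minimizer $\mu$ of \eqref{min3} and derives the screening identity $\phi_\mu=\phi_{\mu^+}$ on $\R^3\setminus\Omega^+$ by a first-variation argument: perturbing $\mu$ by small multiples of $\mathcal H^2\Mattes(\partial\Omega^+\cap B_\rho(x_0))$ yields that $\phi_{\mu^+}-\phi_\mu$ equals a constant $\alpha$ $\mu$-a.e.\ on $\partial\Omega^+$ and is $\leq\alpha$ on all of $\partial\Omega^+$; a maximum-principle argument together with the spherical-mean identity then forces $\alpha=0$ and hence $\phi=0$ outside $\Omega^+$. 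Only after this does the paper run the quadratic expansion $\mathcal F(\nu)=\mathcal I(\mu-\nu)+\mathcal F(\mu)$ to conclude.

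You instead \emph{construct} the optimal measure explicitly as the balayage $\hat\mu$ of $\mu^+$ onto $\partial\Omega^+$, obtaining the screening identity $\phi_{\hat\mu}=\phi_{\mu^+}$ on $\R^3\setminus\Omega^+$ directly from the Dirichlet problem, and then use the same quadratic expansion to show $\hat\mu$ is the unique minimizer over all of $\mathcal A_m$; finally you identify $\mu=\hat\mu$ by strict convexity on the smaller feasible set. This is cleaner and more informative --- it \emph{names} the minimizer --- at the price of invoking some classical potential theory (solvability of the Dirichlet problem on Lipschitz domains, superharmonicity of $h$ via the local mean-value inequality at boundary points, the far-field expansion for the mass identity). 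The paper's approach is more self-contained and in the spirit of the Euler--Lagrange analysis used elsewhere in the article, but does not explicitly identify $\mu$ as the swept measure. Both arguments share the same final step: once screening holds, the bilinear identity plus positive-definiteness of $\mathcal I$ on finite-energy signed measures finishes the proof.
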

\begin{proof}
  We start by observing that the existence of a minimizer is guaranteed by the direct method of the Calculus of
  Variations. Indeed, given a minimizing sequence $(\mu_n)_n$, by the uniform bound $\mu_n(\partial\Omega^+)=m$ we can
  extract a (not relabeled) subsequence weakly*-converging to some positive measure $\mu$ supported on
  $\partial\Omega^+$, and such that $\mu(\partial\Omega^+)=m$. Moreover, by semicontinuity of $\mathcal{I}$ with respect
  to weak*-convergence of positive measures, and by the same argument as in \eqref{eq-gammaconv1} with $\mu_\e$ replaced
  by $\mu_n$, we easily obtain that $\mu$ is a minimizer of \eqref{min3}.

  \medskip

  We now claim that the potential $\phi_\mu$ generated by the minimizer $\mu$, according to \eqref{eq-potmu}, coincides
  with the potential $\phi_{\mu^+}$ outside $\Omega^+$.  Fix any point $x_0\in\partial\Omega^+$, let $\rho>0$ and denote
  $\alpha_\rho:=\mathcal{H}^2(\partial\Omega^+\cap B_\rho(x_0))$.  Consider for $\e>0$ the measure
  \begin{align*}
    \mu_\e := \e\mathcal{H}^2\Mattes(\partial\Omega^+\cap B_\rho(x_0)) + \frac{m-\e\alpha_\rho}{m}\ \mu\,,
  \end{align*}
  which is admissible in problem \eqref{min3} for $\e$ sufficiently small.  Then, by minimality of $\mu$
  \begin{align*}
    0 &\geq -2\mathcal{I}(\mu^+,\mu) + \mathcal{I}(\mu) + 2\mathcal{I}(\mu^+,\mu_\e) - \mathcal{I}(\mu_\e)\\
    & = \frac{2\e\alpha_\rho}{m} \int_{\partial\Omega^+}\int_{\partial\Omega^+}\frac{1}{4\pi|x-y|}\dds\mu(x)\dd\mu(y)
    - \frac{2\e\alpha_\rho}{m} \int_{\Omega^+}\int_{\partial\Omega^+}\frac{1}{4\pi|x-y|}\dds x\dds\mu(y) \\
    & \qquad + 2\e\int_{\Omega^+}\int_{\partial\Omega^+\cap B_\rho(x_0)}\frac{1}{4\pi|x-y|}\dds x\dds\mathcal{H}^2(y) \\
    & \qquad - 2\e\int_{\partial\Omega^+}\int_{\partial\Omega^+\cap B_\rho(x_0)}\frac{1}{4\pi|x-y|}\
    \dd\mathcal{H}^2(x)\dd\mu(y) + o(\e)\,.
  \end{align*}
  Dividing by $\e$ and letting $\e\to0^+$ we obtain
  \begin{align*}
    \frac{1}{m} \int_{\partial\Omega^+} (\phi_{\mu^+}-\phi_\mu)\dds\mu \geq
    \frac{1}{\alpha_\rho}\int_{\partial\Omega^+\cap B_\rho(x_0)}(\phi_{\mu^+}-\phi_\mu)\dds\mathcal{H}^2\,,
  \end{align*}
  and since the right-hand side in the previous inequality converges as $\rho\to0$ to $(\phi_{\mu^+}-\phi_\mu)(x_0)$, we
  obtain that
  \begin{align*}
    \frac{1}{\mu(\partial\Omega^+)}\int_{\partial\Omega^+}(\phi_{\mu^+}-\phi_\mu)\dds\mu \geq
    (\phi_{\mu^+}-\phi_\mu)(x_0)
  \end{align*}
  for every $x_0\in\partial\Omega^+$.  We then conclude that there exists a constant $\alpha$ such that $\phi :=
  \phi_{\mu^+}-\phi_\mu = \alpha$ $\mu$-a.e. on $\partial\Omega^+$, and $\phi\leq\alpha$ on $\partial\Omega^+$.

  \medskip

  Observe that, if $R_0>0$ denotes a radius such that $\Omega^+\subset B_{R_0}$, by \eqref{eq-pot2} (which still holds
  in the present setting: see, for instance, \cite[Theorem~6.12]{Hel}) we have
  \begin{align} \label{eq-gammaconv4} \int_{\partial B_R}\phi\dds\mathcal{H}^2 = 0 \qquad\text{for every }R>R_0\ .
  \end{align}
  Now, since $\phi$ is superharmonic in $\R^3\setminus\supp\mu$, $\phi=\alpha$ on $\supp\mu$ and $\phi$ vanishes at
  infinity, by the minimum principle we have that
  \begin{align}\label{eq-gammaconv5}
    \phi\geq\min\{0,\alpha\} \qquad\text{in }\R^3\setminus\supp\mu\ .
  \end{align}
  Hence condition \eqref{eq-gammaconv4} excludes the case $\alpha>0$.  On the other hand, if $\alpha<0$ then we would
  have that $\phi$ is harmonic in $\R^3\setminus\overline{\Omega^+}$, $\phi\leq\alpha<0$ on $\partial\Omega^+$ and
  $\phi$ vanishes at infinity, so that $\phi<0$ in $\R^3\setminus\overline{\Omega^+}$, which is again a contradiction
  with \eqref{eq-gammaconv4}.  Thus $\alpha=0$ and combining \eqref{eq-gammaconv5} with the fact that $\phi\leq\alpha$
  on $\partial\Omega^+$, we conclude that $\phi=0$ on $\partial\Omega^+$.  In turn, this implies that $\phi=0$ in
  $\R^3\setminus\Omega^+$ since $\phi$ is harmonic in $\R^3\setminus\overline{\Omega}^+$ and vanishes at infinity.  We
  have then proved that
  \begin{align} \label{eq-gammaconv2} \int_{\partial\Omega^+}\frac{1}{4\pi|x-y|}\dds\mu(y) =
    \int_{\Omega^+}\frac{1}{4\pi|x-y|}\dds y \qquad\text{for every }x\in\R^3\setminus\Omega^+\ .
  \end{align}

  \medskip

  We can now complete the proof of the proposition, showing that $\mu$ is the minimizer of $\mathcal{F}$ over
  $\mathcal{A}_m$.  Indeed, for every $\nu\in\mathcal{A}_m$ we have, using \eqref{eq-gammaconv2}
  \begin{align*}
    \mathcal{F}(\nu) & = -2\int_{\R^3}\int_{\Omega^+}\frac{1}{4\pi|x-y|}\dds x\dds\nu(y)
    + \int_{\R^3}\int_{\R^3}\frac{1}{4\pi|x-y|}\dds\nu(x)\dd\nu(y) \\
    & = -2\int_{\R^3}\int_{\partial\Omega^+}\frac{1}{4\pi|x-y|}\dds\mu(x)\dd\nu(y)
    + \int_{\R^3}\int_{\R^3}\frac{1}{4\pi|x-y|}\dds\nu(x)\dd\nu(y) \\
    & = \int_{\R^3}\int_{\R^3}\frac{1}{4\pi|x-y|}\dds(\mu-\nu)(x)\dd(\mu-\nu)(y)
    - \int_{\R^3}\int_{\R^3}\frac{1}{4\pi|x-y|}\dds\mu(x)\dd\mu(y) \\
    & = \mathcal{I}(\mu-\nu) + \mathcal{F}(\mu)\,.
  \end{align*}
  Using the fact that $\mathcal{I}(\mu-\nu)\geq0$, with equality if and only if $\mu=\nu$ (see
  \cite[Theorem~1.15]{Lan}), we obtain the conclusion.
\end{proof}
\begin{remark} \label{rem-surfacepot} The proof of the previous proposition shows, in particular, the following
  interesting fact: if $\mu$ solves the minimum problem \eqref{min3}, then
  \begin{align*}
    \int_{\partial\Omega^+}\frac{1}{4\pi|x-y|}\dds\mu(y) = \int_{\Omega^+}\frac{1}{4\pi|x-y|}\dds y \qquad\text{for
      every }x\in\R^3\setminus\Omega^+,
  \end{align*}
  that is, the potential $\phi_\mu$ generated by $\mu$ coincides outside of $\Omega^+$ with the potential $\phi_{\mu^+}$
  generated by the uniform distribution of charge in $\Omega^+$. In particular, we again find a complete screening
  property: the potential of $\mu^+ -\mu$ vanishes outside of the support of that measure.
\end{remark}


\appendix
\section{Spherically symmetric configurations} \label{sec-example} %

An example in which the shape of the minimizer of \eqref{min-unc} can be explicitly determined is when $\Omega^+$ has spherical symmetry: if $\Omega^+$ is an annulus, the minimizer is given by the union of two annuli touching $\Omega^+$ from the interior and from the exterior respectively (Proposition~\ref{prp-annuli}); in the particular case of a ball, the corresponding minimizer is an annulus around $\Omega^+$ (Corollary~\ref{cor-ball}).

In proving this result we will also compute explicitly the general formula for the energy of balls and annuli. We remark that similar expressions were computed in \cite{GenPel} for spherically symmetric monolayers and bilayers, where the two phases are adjacent and enclose the same volume in any dimensions, by deriving the explicit value of the associated potential.

In the following, given $r_2\geq r_1\geq0$ we denote by $C_{r_1,r_2}:=B_{r_2}\setminus\overline{B}_{r_1}$ the open
annulus enclosed by the radii $r_1$, $r_2$.

\begin{proposition} \label{prp-annuli} %
  Let $\Omega^+=C_{R_1,R_2}$ for some $R_2>R_1>0$, and let $R_*>1$ be the unique solution of $2(R_*^2-1)-(2(R_*^3-1))^{2/3}=
  0$. Let $\Ome^-$ be the minimizer of \eqref{min-unc}. Then:
  \begin{enumerate}
  \item If $\frac{R_2}{R_1}< R_*$ then $\Omega^- = C_{r_1,R_1}\cup C_{R_2,r_2}$, where $r_1\in(0,R_1)$ and $r_2>R_2$ are determined by
    \begin{align}
      \begin{cases}
        r_1^3-r_2^3+2(R_2^3-R_1^3)=0,\\
        r_1^2-r_2^2+2(R_2^2-R_1^2)=0.
      \end{cases}
      \label{eq-opt-bilayer}
    \end{align}
  \item If $\frac{R_2}{R_1} \geq R_*$ then $\Omega^-=B_{R_1}\cup C_{R_2,r}$ with $r=\sqrt[3]{2(R_2^3-R_1^3)}$.
  \end{enumerate}
\end{proposition}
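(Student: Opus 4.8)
The plan is to combine the rotational symmetry of the configuration with the characterisation of the minimiser through the screening property; this reduces the whole statement to solving a pair of scalar algebraic equations. First I would observe that the minimiser is radially symmetric: since $\Omega^+=C_{R_1,R_2}$ and the energy $E$ are invariant under rotations about the origin, for every rotation $\mathcal R$ the set $\mathcal R(\Omega^-)$ is again a minimiser of \eqref{min-unc}, so by the uniqueness in Theorem~\ref{thm-unconstrained} we have $\mathcal R(\Omega^-)=\Omega^-$ up to a null set; hence the potential $\phi$ is radial and, by \eqref{def-Ome-}, the precise representative $\Omega^-=\{\phi>0\}\setminus\overline{\Omega^+}$ is a union of concentric open balls and annuli. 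Since the annulus $C_{R_1,R_2}$ satisfies an interior ball condition, Theorem~\ref{thm-screening} gives $\phi>0$ on $\overline{\Omega^+}$, so $\{\phi>0\}$ contains small annular neighbourhoods of both spheres $\{|x|=R_1\}$ and $\{|x|=R_2\}$ and therefore $\Omega^-$ has a nonempty component adjacent to each of them; on the other hand, by Theorem~\ref{thm-support}\ref{it-2.4-3} every connected component of $\Omega^-$ meets $\partial\Omega^+=\{|x|=R_1\}\cup\{|x|=R_2\}$, and a radial component can do so only if it is one of these two. Hence $\Omega^-=C_{r_1,R_1}\cup C_{R_2,r_2}$ for some $0\le r_1<R_1$ and $r_2>R_2$, the inner piece being the ball $B_{R_1}$ when $r_1=0$.

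Next I would pin down $r_1$ and $r_2$. Charge neutrality $|\Omega^-|=|\Omega^+|$ (Theorem~\ref{thm-unconstrained}) becomes, after dividing by $\frac{4\pi}{3}$, the relation $r_1^3-r_2^3+2(R_2^3-R_1^3)=0$, which is the first equation of \eqref{eq-opt-bilayer} and holds in both cases. Assume $r_1>0$; then $B_{r_1}$ lies in the uncharged region $\R^3\setminus(\Omega^+\cup\Omega^-)$, so by the screening identity \eqref{eq-screen} and continuity $\phi\equiv0$ on $B_{r_1}$, in particular $\phi(0)=0$. Writing $f:=\chi_{\Omega^+}-\chi_{\Omega^-}$, whose radial profile equals $-1$ on $(r_1,R_1)$, $+1$ on $(R_1,R_2)$, $-1$ on $(R_2,r_2)$ and $0$ otherwise, a short computation gives
\begin{align*}
  0=\phi(0)=\frac{1}{4\pi}\int_{\R^3}\frac{f(y)}{|y|}\dds y=\int_0^\infty s\,f(s)\dds s=\frac12\bigl(r_1^2-r_2^2+2(R_2^2-R_1^2)\bigr)\,,
\end{align*}
which is the second equation of \eqref{eq-opt-bilayer}. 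If instead $r_1=0$, only charge neutrality is available and it yields directly $r_2=\sqrt[3]{2(R_2^3-R_1^3)}$, as in case~(ii).

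It remains to decide which alternative occurs in terms of $R_1,R_2$. Eliminating $r_2$ through $r_2=(r_1^3+2(R_2^3-R_1^3))^{1/3}=(r_1^2+2(R_2^2-R_1^2))^{1/2}$, the system \eqref{eq-opt-bilayer} with $r_1\in(0,R_1)$ reduces to the single equation $h(r_1)=0$ for
\begin{align*}
  h(t):=\bigl(t^3+2(R_2^3-R_1^3)\bigr)^{1/3}-\bigl(t^2+2(R_2^2-R_1^2)\bigr)^{1/2}\,.
\end{align*}
A direct analysis of $h$ shows that it is decreasing near $0$, changes sign at most once on $[0,R_1]$, satisfies $h(R_1)<0$ for all $R_2>R_1>0$, and that $h(0)>0$ if and only if $(2(R_2^3-R_1^3))^{2/3}>2(R_2^2-R_1^2)$, which — writing $\rho:=R_2/R_1$, so that the inequality becomes $(2(\rho^3-1))^{2/3}>2(\rho^2-1)$ — holds precisely for $\rho<R_*$. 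Consequently, for $R_2/R_1<R_*$ the equation $h(r_1)=0$ has a unique solution $r_1\in(0,R_1)$, and together with $r_2=(r_1^2+2(R_2^2-R_1^2))^{1/2}>R_2$ this gives case~(i); for $R_2/R_1\ge R_*$ there is no such $r_1$, so the minimiser must have $r_1=0$ and we are in case~(ii). This last assignment is consistent: in the configuration $B_{R_1}\cup C_{R_2,r}$ with $r=\sqrt[3]{2(R_2^3-R_1^3)}$ the same radial computation gives $\phi(0)=\frac12\bigl(2(R_2^2-R_1^2)-(2(R_2^3-R_1^3))^{2/3}\bigr)$, which is $\geq0$ (as required by $\phi\geq0$) exactly when $R_2/R_1\ge R_*$. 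The conceptual core of the proof is the reduction by symmetry together with the identity $\phi(0)=\int_0^\infty s\,f(s)\dds s$ turning the screening property into \eqref{eq-opt-bilayer}; the main technical obstacle is this last step — the elementary but fiddly monotonicity and sign analysis of $h$ — which is what actually forces the transition between the two geometries to occur exactly at $R_*$.
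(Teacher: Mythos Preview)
Your argument follows the same route as the paper's: rotational invariance plus uniqueness (Theorem~\ref{thm-unconstrained}) reduce $\Omega^-$ to a radial set; Theorem~\ref{thm-support}\ref{it-2.4-3} together with the interior ball condition force it to consist of exactly one inner and one outer shell touching $\partial\Omega^+$; charge neutrality gives the cubic relation; and screening at the origin, via your identity $\phi(0)=\int_0^\infty s\,f(s)\,ds$, gives the quadratic one. The paper derives the second equation equivalently from the explicit energy formula for annuli, but notes in parentheses that vanishing of $\phi$ on $\partial B_{r_1}$ is an equivalent condition --- which is precisely what you use.

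The one soft spot is in your last paragraph: you assert that $h$ ``changes sign at most once on $[0,R_1]$'', but this is neither proved nor obvious --- the equivalent equation $(t^3+A)^2=(t^2+B)^3$ is quartic in $t$ and Descartes' rule does not immediately settle it. The paper bypasses any study of $h$ by the elementary identity
\[
(r_2^3-r_1^3)^2-(r_2^2-r_1^2)^3=(r_2-r_1)^2\,r_1^2\bigl(3r_2^2+4r_1r_2+2r_1^2\bigr)>0\qquad(0<r_1<r_2),
\]
which, applied to $r_2^2-r_1^2=2(R_2^2-R_1^2)$ and $r_2^3-r_1^3=2(R_2^3-R_1^3)$, shows at once that case~(i) with $r_1>0$ forces $(2(R_2^3-R_1^3))^{2/3}>2(R_2^2-R_1^2)$, i.e.\ $R_2/R_1<R_*$. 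Combined with your own consistency check at the end (case~(ii) is compatible with $\phi\geq0$ exactly when $R_2/R_1\geq R_*$) and with the fact that the minimiser exists in one of the two forms, this already yields the dichotomy --- so the monotonicity/sign analysis of $h$ is not needed.
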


\begin{proof}
We divide the proof into two steps.

\medskip

{\it Step 1.} We start by computing the self-interaction energy of an annulus $C_{r_1,r_2}$ in spherical coordinates:
\begin{equation*}
  \int_{C_{r_1,r_2}} \int_{C_{r_1,r_2}} \frac{1}{4\pi|x-y|}\dds x \dd y
  = \int_{r_1}^{r_2}\dd\rho \int_0^{2\pi}\dd\theta \int_0^\pi \dd\phi
  \biggl( \int_{C_{r_1,r_2}} \frac{\rho^2\sin\phi}{4\pi|x(\rho,\theta,\phi)-y|}\dds y \biggr)
\end{equation*}
where $x(\rho,\theta,\phi)$ denotes the point in $C_{r_1,r_2}$ whose spherical coordinates are
$(\rho,\theta,\phi)$. Since by rotation invariance the inner integral depends in fact only on $\rho$, we can compute it
for $x=\rho e_3$, obtaining
\begin{align*}
  \hspace{6ex} & \hspace{-6ex} %
  \int_{C_{r_1,r_2}} \int_{C_{r_1,r_2}} \frac{1}{4\pi|x-y|}\dds x \dd y
  = \int_{r_1}^{r_2} \rho^2 \biggl( \int_{C_{r_1,r_2}} \frac{\dd y}{|\rho e_3-y|} \biggr) \dd\rho\\
  &= \int_{r_1}^{r_2} \rho^2 \biggl( \int_{r_1}^{r_2} \int_0^{2\pi} \int_0^\pi \frac{r^2\sin\phi}{(r^2+\rho^2-2r\rho\cos\phi)^{1/2}} \dds\phi \dd\theta \dd r  \biggr) \dd\rho\\
  &= 2\pi \int_{r_1}^{r_2} \int_{r_1}^{r_2} \rho r \bigl(r+\rho-|r-\rho|\bigr) \dds\rho\dd r\,.
\end{align*}
From the explicit computation of the last integral we conclude that
\begin{align}
\int_{C_{r_1,r_2}} \int_{C_{r_1,r_2}} \frac{1}{4\pi|x-y|}\dds x \dd y =
  \frac{4\pi}{15}\Bigl(3r_1^5+2r_2^5-5r_1^3r_2^2\Bigr)\,. \label{eq:ring}
\end{align}
By similar computations we obtain the interaction energy of two disjoint annuli: for $r_2\geq r_1\geq R_2\geq R_1>0$ we have
\begin{equation*}
  \int_{C_{r_1,r_2}} \int_{C_{R_1,R_2}} \frac{1}{4\pi|x-y|}\dds x \dd y
  = \frac{2\pi}{3} \bigl(R_2^3-R_1^3\bigr) \bigl( r_2^2 - r_1^2 \bigr) \,.
\end{equation*}

\medskip

{\it Step 2.} We now turn to the proof of the statement. Notice that by scaling we can assume without loss of generality that the inner radius of the annulus is equal to 1, and in particular we can consider $\Omega^+=C_{1,R}$ for some $R>1$. By uniqueness, $\Ome^-$ is invariant under rotations, so that it consists of a union of annuli. By Theorem~\ref{thm-support} $\Ome^-$ is the union of an annulus, touching $\Omega^+$ from the exterior, and an annulus or a ball touching $\Omega^+$ from the interior.

  The first possibility is that the connected component of $\Omega^-$ internal to $\Omega^+$ is not the unit ball; the
  minimizer is then the union of two annuli, $\Omega^-=C_{r_1,1}\cup C_{R,r_2}$. The conditions \eqref{eq-opt-bilayer}
  are induced by the volume constraint and by choosing $r_1>0,r_2>1$ such that the energy, computed in the previous step, has a local minimum (or, equivalently that $\phi$ vanishes on $\partial B_{r_1}$ and $\partial B_{r_2}$). Since by \eqref{eq-opt-bilayer}
  \begin{align*}
    2(R^2-1) \ =\ r_2^2 -r_1^2 \ \leq\ \big(r_2^3-r_1^3\big)^{\frac{2}{3}} \ =\ \big(2(R^3-1)\big)^{\frac{2}{3}}
  \end{align*}
  we conclude that this possibility can only occur for $2(R^2-1) -\big(2(R^3-1)\big)^{\frac{2}{3}}\leq 0$, which is
  equivalent to $R\leq R_*$.

  The second possibility is that the connected component of $\Omega^-$ internal to $\Omega^+$ is the unit ball, with an
  external annulus $C_{R,r}$, where $r$ is determined by the volume constraint. After some calculations we obtain in
  this case that
  \begin{align*}
    \phi(0) \ =\ (R^2-1) -\frac{1}{2}\big(2(R^3-1)\big)^{\frac{2}{3}}\,.
  \end{align*}
  Since for any minimizer $\phi\geq 0$ holds by Lemma~\ref{lem-phipos} we obtain that the second possibility can only
  occur if $2(R^2-1) -\big(2(R^3-1)\big)^{\frac{2}{3}}\geq 0$, which is equivalent to $R\geq R_*$ (in case of equality
  we just have $r_1=0$ in the first case, that is the inner annulus degenerates to the unit-ball). Since we always are
  in one of the two cases the claim follows.
\end{proof}

\begin{corollary}[The case of a ball] \label{cor-ball} %
  Let $\Omega^+=B_R$ for some $R>0$. Then the minimizer of \eqref{min-unc} is the annulus $\Omega^-=C_{1,\sqrt[3]{2}R}$.
\end{corollary}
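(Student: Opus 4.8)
The plan is to obtain Corollary~\ref{cor-ball} directly from the uniqueness statement in Theorem~\ref{thm-unconstrained} and the structural information in Theorem~\ref{thm-support}, without redoing any energy computation. One could alternatively read the ball as the degenerate case $R_1\to0$ of Proposition~\ref{prp-annuli}, but this would require justifying the convergence of the minimizers as the inner radius vanishes, so I prefer the self-contained argument below. Note that $B_R$ is smooth, hence satisfies the hypotheses of Theorems~\ref{thm-screening}--\ref{thm-support}, including the interior ball condition, so all the cited results apply.

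First I would use rotational symmetry. Since $\Omega^+=B_R$ is invariant under every $g\in SO(3)$ and the energy $E$ is rotation invariant, if $\Omega^-$ is the minimizer of \eqref{min-unc} then $g\Omega^-$ is a minimizer as well; by the uniqueness part of Theorem~\ref{thm-unconstrained} it follows that $g\Omega^-=\Omega^-$ up to a Lebesgue-null set. Hence $u:=\chi_{\Omega^-}$ is rotationally symmetric almost everywhere, and therefore so is the associated potential $\phi$, being the Newtonian potential of the rotationally symmetric datum $u^+-u$; consequently the precise representative $\Omega^-=\{\phi>0\}\setminus\overline{\Omega^+}$ fixed in \eqref{def-Ome-} is genuinely rotationally symmetric. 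Being in addition open, bounded (Theorem~\ref{thm-support}) and contained in $\R^3\setminus\overline{B_R}$, the set $\Omega^-$ meets every ray from the origin in the same open subset of $(R,\infty)$, so it is a disjoint union of open annuli $\Omega^-=\bigcup_i C_{a_i,b_i}$ with $R\le a_1<b_1\le a_2<b_2\le\cdots$.

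Next I would invoke Theorem~\ref{thm-support}\ref{it-2.4-3}: the boundary of each connected component of $\Omega^-$ must meet $\partial\Omega^+=\partial B_R$. Only the innermost annulus $C_{a_1,b_1}$ can do so, and only when $a_1=R$; every further annulus has $a_i\ge b_1>R$, so $\overline{C_{a_i,b_i}}\cap\partial B_R=\emptyset$ and such an annulus must be empty. Since $|\Omega^-|=|\Omega^+|=\tfrac{4\pi}{3}R^3>0$ by the saturation property of Theorem~\ref{thm-unconstrained}, the set $\Omega^-$ is nonempty, so $\Omega^-=C_{R,r}$ for a single radius $r>R$.

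Finally, the radius $r$ is pinned down by charge neutrality: $|C_{R,r}|=\tfrac{4\pi}{3}(r^3-R^3)$ must equal $|\Omega^+|=\tfrac{4\pi}{3}R^3$, hence $r^3=2R^3$, that is $r=\sqrt[3]{2}\,R$, and $\Omega^-=C_{R,\sqrt[3]{2}R}$. There is no genuine obstacle in this argument; the only steps requiring a little care are the symmetrization of the minimizer via uniqueness and the ensuing description of rotationally symmetric open sets avoiding $\overline{B_R}$, after which Theorem~\ref{thm-support}\ref{it-2.4-3} leaves precisely one annulus and the volume identity closes the proof.
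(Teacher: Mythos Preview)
Your argument is correct and follows essentially the same route as the paper's ``short independent proof'': rotational invariance of $\Omega^+$ together with the uniqueness in Theorem~\ref{thm-unconstrained} forces $\Omega^-$ to be a union of annuli, then Theorem~\ref{thm-support}\ref{it-2.4-3} and the charge-neutrality identity $|\Omega^-|=|\Omega^+|$ single out the annulus $C_{R,\sqrt[3]{2}R}$. Your version is in fact more carefully written than the paper's (you pass through the precise representative \eqref{def-Ome-} to make the annulus decomposition rigorous), and you correctly obtain inner radius $R$ rather than the evident typo $1$ in the statement.
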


\begin{proof}
  The result follows immediately from Proposition~\ref{prp-annuli} by letting $R_1\to0$. In this case the argument is actually simpler and we can give a short independent proof: by Theorem~\ref{thm-unconstrained}, the minimizer is unique up to a set of vanishing Lebesgue measure. Since $\Ome^+$ is invariant with respect to rotations, it hence follows that also $\Ome^-$ is rotationally invariant and hence consists of an union of annuli. The annulus $\Omega^-=C_{1,\sqrt[3]{2}R}$ is the only set with this property such that Theorem~\ref{thm-support} holds for every connected component $V$ of $\Ome^-$ and satisfying the charge neutrality condition $|\Omega^-|=|\Omega^+|$.
\end{proof}

\begin{figure}
\begin{tikzpicture}[scale=1]
\draw [fill=blue!15!white] (1.259921,0) circle [radius=1.259921];
\draw [fill=blue!15!white] (-1.259921,0) circle [radius=1.259921];
\draw [fill=blue!70!white] (1.259921,0) circle [radius=1];
\draw [fill=blue!70!white] (-1.259921,0) circle [radius=1];
\draw [fill] (0,0) circle [radius=0.05];
\node at (0,-1.3) {$\Omega^+$};
\draw (0,-1) to (-1.259921,0);
\draw (0,-1) to (1.259921,0);
\node at (0,1.3) {$\Omega^-$};
\draw (0,1) to (0.5,0.8);
\draw (0,1) to (-0.5,0.8);
\end{tikzpicture}
\caption{Example of occurrence of a singularity at the origin in the boundary of a minimizer.}
\label{fig-sing}
\end{figure}
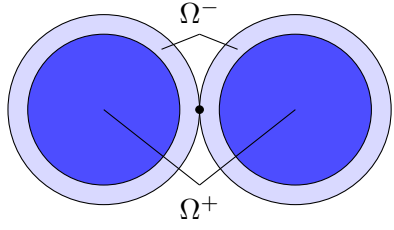
\begin{remark} \label{rem-singularity}
We can construct a simple example of occurrence of a singularity in the boundary of a minimizer as follows. Let $\Omega^+:= B_1(\sqrt[3]{2}e_1)\cup B_1(-\sqrt[3]{2}e_1)$ be the union of two disjoint unit balls centered at points at distance $2\sqrt[3]{2}$. The corresponding minimizing configuration is then the union of the two annuli which minimize separately the energy for the two connected components of $\Omega^+$, i.e. $\Omega^-=C_{1,\sqrt[3]{2}}(\sqrt[3]{2}e_1)\cup C_{1,\sqrt[3]{2}}(-\sqrt[3]{2}e_1)$. Indeed, by linearity the resulting potential is nonnegative and vanishes outside $\overline{\Omega^+}\cup\Omega^-$, and by uniqueness of the configuration having the screening property (see Remark~\ref{rem-screening}) we conclude that $\Omega^-$ is the minimizer. Notice that the two annuli touch at the origin, which is thus a singular point (see Figure~\ref{fig-sing}).
\end{remark}


\bigskip \bigskip
\noindent {\bf Acknowledgments.}
MB is member of the INdAM - GNAMPA Project 2015 ``Critical Phenomena in the Mechanics of Materials: a Variational Approach''. MB and HK want to thank the Mathematics Center Heidelberg (MATCH) for support. HK would like to thank S.~M\"uller for interesting discussions.


\end{document}